\theoremstyle{plain}% default
\newtheorem{thm}{Theorem}[section]
\newtheorem{lem}[thm]{Lemma}
\newtheorem{cor}[thm]{Corollary}
\theoremstyle{definition}
\newtheorem{exmp}{Example}
\newtheorem{rem}[thm]{Remark}
\theoremstyle{remark}
\def\N{\mathbb{N}}
\def\P{\mathbb{P}}
\def\R{\mathbb{R}}
\def\cB{\mathcal{B}}
\def\cF{\mathcal{F}}
\def\cK{\mathcal{K}}
\def\cN{\mathcal{N}}
\def\cP{\mathcal{P}}
\def\cO{\mathcal{O}}
\DeclareMathOperator{\diag}{diag}
\DeclareMathOperator{\intr}{int}
\DeclareMathOperator{\cl}{cl}
\DeclareMathOperator{\sign}{sign}
\DeclareMathOperator{\conv}{conv}
\DeclareMathOperator{\argmin}{argmin}
\DeclareMathOperator{\lin}{lin}
\DeclareMathOperator{\tr}{tr}
\DeclareMathOperator{\dH}{\delta_H}	%Hausdorff distance	
\newcommand{\nn}{\nonumber}
\newcommand{\eL}{\hat{Q}_m}			%Estimator based on Legendre moments on P^(m).
\newcommand{\ePn}{\tilde{P}_{N,n}}  %LSQ based on noisy measurements of Legendre moments.
\newcommand{\eP}{\hat{P}_{N,n}}     %LSQ, no noise on \cPt.
\newcommand{\cPt}{\cP(\theta_1, \dots, \theta_n)}   %Set of consistent polygons.
\newcommand{\sol}{\P_{N,n}(\epsilon)}				%Set of LSQ estimators, noise.
\author[1]{Astrid Kousholt}
\author[2,3]{Julia Schulte\footnote{The second author married and changed the last name from Hörrmann to Schulte during the submission process.}}
\affil[1]{Department of Mathematics, Aarhus University, astrid.kousholt@gmail.com}
\affil[2]{Department of Computer Science, ETH Zürich, jschulte@ethz.ch}
\affil[3]{Department of Mathematics, Ruhr-Universität Bochum, julia.hoerrmann@rub.de}
\date{}
\title{Reconstruction of convex bodies from moments}
\begin{document}
\maketitle

\begin{abstract}
We investigate how much information about a convex body can be retrieved from a finite number of its geometric moments. We give a sufficient condition for a convex body to be uniquely determined by a finite number of its geometric moments, and we show that among all convex bodies, those which are uniquely determined by a finite number of moments form a dense set. Further, we derive a stability result for convex bodies based on geometric moments. It turns out that the stability result is improved considerably by using another set of moments, namely Legendre moments. We present a reconstruction algorithm that approximates a convex body using a finite number of its Legendre moments. The consistency of the algorithm is established using the stability result for Legendre moments. When only noisy measurements of Legendre moments are available, the consistency of the algorithm is established under certain assumptions on the variance of the noise variables.

\end{abstract}

\paragraph*{Keywords:} Convex body, geometric moment, Legendre moment, reconstruction, uniqueness, stability.   
\paragraph*{MSC:} Primary 52A20, 47A57; Secondary 68U10, 94A12, 94A08.

\section{Introduction}
Important characteristics of a compact set $K\subset \R^n$ are its \textit{geometric moments} (sometimes only referred to as moments) where
\begin{equation}\label{eq:GeomMomSet}
\mu_{\alpha}(K) = \int_K x^{\alpha} dx
\end{equation}
 is the geometric moment of order $\lvert \alpha \rvert$ for a multi-index $\alpha \in \N_0^n$, $x^\alpha:=x_1^{\alpha_1}\cdots x_n^{\alpha_n}$ and $\lvert \alpha \rvert:=\alpha_1+\cdots +\alpha_n$.

The reconstruction of a geometric object from its moments is of interest in a wide range of fields among which are probability and statistics \cite{Diaconis.1987}, signal processing \cite{Sezan.1987} and computational tomography \cite{Milanfar.1995,Milanfar.1996}, see \cite{Golub.1999} for an overview.
E.g. the reconstruction of a geometric object from its X-rays can be reformulated as the problem of reconstruction from geometric moments \cite[Section V]{Milanfar.1995}. 

In the last two decades, the reconstruction of a polytope from its moments has received considerable attention.
Milanfar et al. developed in \cite{Milanfar.1995} an inversion algorithm for $2$-dimensional polygons and presented a refined numerically stable version in \cite{Golub.1999}. Restricting to convex polygons they proved that every $m$-gon is uniquely determined by its complex moments up to order $2m-3$.
Recently, Gravin et al. showed in \cite{Gravin.2011} that an $n$-dimensional convex polytope $P$ with $m$ vertices is uniquely determined by its moments up to order $2m-1$.
 %More precisely it suffices the knowledge of moments of the form
%\[
%\mu_j(z_i):=\int\limits_{P}\langle x, z_i\rangle ^jdx,\quad j=0,\ldots, 2m-1,\; i=1,\ldots,n+1,
%\]
%with $z_1,\ldots,z_n$ being linearly independent vectors, $z_{n+1}=\sum\limits_{j=1}^n \alpha_j z_j$ with $\alpha_j>0, 0\leq j\leq n$ and none of the vectors $z_1,\ldots, z_{n+1}$ being orthogonal to any line connecting two vertices of $P$.
Apart from polytopes, an exact reconstruction from finitely many moments is known to be possible for so-called quadrature domains in the complex plane, see \cite{Gustafsson.2000}. 

Furthermore, the reconstruction of convex bodies from different kinds of indirect measurements has seen several advances \cite{Gardner.2006} including the 
reconstruction from measurements of the support function \cite{Prince.1990}, the brightness function \cite{Gardner.2003} or the lightness function \cite{Campi.2012}. Recently, a similar investigation of another set of moments, namely moments of surface area measures was carried out for planar convex bodies in \cite{Kousholt2016} and for $n$-dimensional convex bodies in \cite{Kousholt2017}.

In continuation of the work in this area, we investigate how much information can be retrieved from finitely many geometric moments of an arbitrary convex body in $\R^n$. 

Using uniqueness results for functionals, see \cite{Krein.1977} and \cite{Putinar.1997}, applied to indicator functions, we show that if a convex body $K$ is of the form $C \cap \{p \geq 0\}$, where  $C$ is a compact subset of $\R^n$ and $p$ is a polynomial of degree $N$, then $K$ is uniquely determined by its geometric moments up to degree $N$ among all convex bodies in $C$. Further, any convex body in $C$ can be approximated arbitrarily well in the Hausdorff metric by a convex body of the form $C \cap \{p \geq 0\}$. This result and the fact that the geometric moments up to order $2$ of a convex body $K$ determine an upper bound on the circumradius of $K$ imply that among all convex bodies, those which are uniquely determined by finitely many geometric moments form a dense subset, see Theorem~\ref{ThmUnique}.  

Restricting to convex bodies in the two-dimensional unit square, we derive an upper bound on the Nikodym distance between two convex bodies given finitely many of their geometric moments, see Theorem~\ref{ThmEstimateFiniteMoments}. The upper bound is derived using a stability result for absolutely continuous functions on the unit interval, see \cite{Talenti.1987}. This result is extended to twice continuously differentiable functions on the two-dimensional unit square and applied to differences of indicator functions via an approximation argument. The upper bound depends on the number of moments used and also on the Euclidean distance between the moments of the two convex bodies. The upper bound decreases when the distance between the moments decreases. However, it increases exponentially in the number of moments. The method used to derive the upper bound of the Nikodym distance suggests that the geometric moments should be replaced by another set of moments, namely the Legendre moments, in order to remove the effect of the exponential factor. The Legendre moments of a convex body are defined like the usual geometric moments, but with the monomials replaced by products of Legendre polynomials, see Section~\ref{SecPrelim}. Using that these products of Legendre polynomials constitute an orthonormal basis of the square integrable functions on the unit square and that the Legendre polynomials satisfy a certain differential equation, we derive an upper bound of the Nikodym distance that becomes arbitrarily small when the distance between the Legendre moments decreases and the number of moments used increases, see Theorem~\ref{ThmEstimateLegendre}.

In Section~\ref{Sec_LSQ}, we assume that the first $(N+1)^2$ Legendre moments of an unknown convex body $K$ are available for some $N \in \N$. A polygon $P$ with at most $m \in \N$ vertices is called a least squares estimator of $K$ if the Legendre moments of $P$ fit the available Legendre moments of $K$ in a least squares sense. We derive an upper bound of the Euclidean distance between the Legendre moments of $K$ and the Legendre moments of an arbitrary least squares estimator $P$ of $K$. In combination with the previously described stability result, this yields an upper bound of the Nikodym distance between $K$ and $P$ (Theorem~\ref{ThmLSQdistbounds}). This upper bound of the Nikodym distance becomes arbitrarily small when $N$ and $m$ increase. For completeness, we further derive an upper bound for the Nikodym distance between $K$ and a least squares estimator based on geometric moments. Due to the structure of the stability results, this upper bound increases exponentially when the number of available geometric moments increases.

In Section~\ref{SecReconstruction}, we derive a reconstruction algorithm for convex bodies. The input of the algorithm is a finite number of Legendre moments of a convex body $K$, and the output of the algorithm is a polygon $P$ with Legendre moments that best fit the available Legendre moments of $K$ in a least squares sense. The output polygon $P$ has prescribed outer normals, which ensures that $P$ can be found as the solution to a polynomial optimization problem. The consistency of the reconstruction algorithm is established in Corollary~\ref{cor_convergence}. In Section~\ref{SecNoise}, the reconstruction algorithm is extended such that it allows for Legendre moments disrupted by noise. To ensure consistency of the algorithm in this case, the variances of the noise terms should decrease appropriately when the number of input moments increases, see Theorem~\ref{ThmNoise}. 
In Section~\ref{Chapter:Implementation} the implementation of the reconstruction from geometric and Legendre moments, respectively, is described and three examples of reconstructions (of a square, a half circle and a body of constant width) are provided. 

The paper is organized as follows. Preliminaries and notations are introduced in Section~\ref{SecPrelim}. The uniqueness results are presented in Section~\ref{SecUnique}, and the stability results are derived in Section~\ref{SecStab}. In Section~\ref{Sec_LSQ}, the least squares estimators based on geometric moments and Legendre moments are treated. Finally, the reconstruction algorithm is described and discussed  in Section~\ref{SecReconstruction} and examples are provided in Section~\ref{Chapter:Implementation}.

\section{Notation and preliminaries}\label{SecPrelim}

We denote by $\N_0$ the natural numbers including $0$ and by $\mathbf{1}_{A}$ the indicator function for a subset $A\subset\R^n$.
In the following we introduce several notions from convex geometry and refer to \cite{Schneider14} as a general reference.
A convex body is a compact, convex subset of $\R^n$ with nonempty interior. The space of convex bodies contained in $\R^n$ is denoted by $\cK^n$ and is equipped with the Hausdorff metric $\dH$. On the set $\{K \in \cK^2 \mid K \subset [0,1]^2\}$, we use the Nikodym distance $\delta_N$ in addition to the Hausdorff metric.  The Nikodym distance of two convex bodies $K,L \subset [0,1]^2$ is the area of the symmetric difference of $K$ and $L$, that is
\begin{equation*}
\delta_N(K,L)= V_2( (K \setminus L) \cup (L \setminus K))=\| \mathbf{1}_K - \mathbf{1}_L \|_2^2,
\end{equation*}
where $\| \cdot \|_2$ is the usual norm on the set $L^2([0,1]^2)$ of square integrable functions on $[0,1]^2$. On the set $\{K \in \cK^2 \mid K \subset [0,1]^2\}$, the Hausdorff metric and the Nikodym distance induce the same topology, see \cite{Shephard1965}. The support function of a convex body $K$ is denoted by $h_K$, and for $K \in \cK^2$ and $\theta \in [0,2\pi)$, we write $h_K(\theta):=h_K((\cos\theta, \sin\theta))$. For a convex body $K$, let $s(K)$ denote the center of mass, $V_n(K)$ the volume, and $R(K)$ the circumradius of $K$. We denote by $B^n$ the closed unit ball and by $S^{n-1}$ the unit sphere in $\R^n$. The volume of the unit ball is $\kappa_n=\frac{\pi^{\frac{n}{2}}}{\Gamma\left(1+\frac{n}{2}\right)}$ and the area of the unit sphere $S^{n-1}$ is $\omega_n=n\kappa_n$. For $u\in S^{n-1}$ and $t\in\R$ we define a hyperplane by $H(u,t)=\{x\in\R^n: x^\top u= t\}$.
A convex body $K\in\cK^n$ is of class $C^k(\cK^n)$ if its boundary $\partial K$ is a $k$-times differentiable regular submanifold and $K\in C^\infty(\cK^n)$ if it is in $C^k(\cK^n)$ for all $k\in\N$. For $K\in C^2(\cK^n)$ we denote by $\kappa_i(K,x)$ for $i=1,\ldots,n-1$ the principal curvatures of $K$ at $x\in\partial K$.

For an open subset $D\subset\R^n$ denote by $C^\infty(D)$ the class of infinitely differentiable real-valued functions on $D$.

 In Section~\ref{SecStab}, we derive stability results for convex bodies in the unit square. In this context, it turns out to be natural and useful to introduce Legendre moments in addition to geometric moments. Let $\langle\cdot,\cdot\rangle$ be the scalar product on $L^2([0,1]^2)$. The shifted and normalized Legendre polynomials $L_i \colon [0,1] \rightarrow \R, i \in \N_0$, are obtained by applying the Gram-Schmidt orthonormalization to $1,x,x^2, \dots $, and the products of Legendre polynomials
\[
(x_1,x_2)\to L_i(x_1)L_j(x_2), \quad i,j\in\N_0
\]
form an orthonormal basis of $L^2([0,1]^2)$. For a convex body $K \subset [0,1]^2$, we define the Legendre moments of $K$ as
\begin{equation*}
\lambda_{ij}(K)= \int_{K} L_i(x_1) L_j(x_2) d(x_1,x_2)
\end{equation*}
for $i,j \in \N_0$.

The uniqueness and stability results we establish in Sections~\ref{SecUnique} and \ref{SecStab} are derived using uniqueness and stability results  \cite{Krein.1977, Putinar.1997, Talenti.1987} for functionals. In the following, we introduce notation in relation to these results.
 For a compact set $C\subset \R^n$ with nonempty interior, we let $L^\infty(C)$ denote the space of essentially bounded measurable functions $\Phi:C\rightarrow \R$. The essential supremum of $\Phi \in L^{\infty}(C)$ over $C$ is denoted by $\|\Phi\|_{\infty,C}$ and we define $\|\Phi\|_{1,C}:=\int_C |\Phi(x)|dx$. Further, we let 
\[\sign(\Phi)(x)=\begin{cases}1,&\Phi(x)\geq 0,\\ -1,&\text{ otherwise} \end{cases}\]
for $x\in\R^n$.
 
The signed distance function $d_C$ of $C$ is defined as in, e.g., \cite[Section 5]{Delfour.1994} or with opposite signs in \cite[Chapter 4.4]{Krantz.2002}. That is 
\[
d_C(x):=\begin{cases}-\inf_{y \in \partial C} \left\| x - y \right\|,&x\in C,\\ \inf_{y \in \partial C} \left\| x - y \right\|,&x \in \R^n \setminus C,
\end{cases}
\]
where $\left\| \cdot \right\|$ is the Euclidean norm on $\R^n$.
Then the $\varepsilon $-parallel set of $C$ is defined as $C_\varepsilon :=\{x: d_C(x)\leq \varepsilon \}$ for $\varepsilon \in \R$.

The geometric moments of a function $\Phi\in L^\infty (C)$ are given as
\begin{equation}\label{eq:GeomMomFunc}
\mu_{\alpha}(\Phi)=\int\limits_{C}\Phi(x) \, x^\alpha dx 
\end{equation}
for  $\alpha \in \N_0^n$, and the Legendre moments of $\Psi \in L^{\infty}([0,1]^2)$ are defined as
\[
\lambda_{ij}(\Psi)=\int\limits_{[0,1]^2}\Psi(x) \, L_i(x_1) L_j(x_2) d(x_1,x_2)
\]
for $i,j \in \N_0$. Notice that $\mu_{\alpha}(K)=\mu_{\alpha}(\mathbf{1}_K)$ for a convex body $K \subset C$, and $\lambda_{ij}(L)=\lambda_{ij}(\mathbf{1}_L)$ for a convex body $L \subset [0,1]^2$.

Furthermore, we need the Steiner formula \cite[Equation (4.1)]{Schneider14} for the volume of the parallel set of a convex body $K$. Let $\epsilon>0$, then
\begin{equation}\label{eq:Steiner}
V_n(K_\epsilon)=\sum\limits_{j=0}^n \epsilon^{n-j}\kappa_{n-j}V_j(K),
\end{equation}
with constants $V_0(K),\ldots,V_n(K)$ which are the intrinsic volumes of $K$. In particular $V_n(K)$ is the volume of $K$ as previously defined, $V_{n-1}(K)$ is half of the $(n-1)$-dimensional surface area and $V_0(K)=1$.

\section{Uniqueness results}\label{SecUnique}
In this section, we present uniqueness results for convex bodies based on a finite number of geometric moments. We show that the convex bodies that are uniquely determined in $\cK^n$ by a finite number of geometric moments form a dense subset of $\cK^n.$ This result is established using uniqueness results from \cite{Krein.1977} and \cite{Putinar.1997} for functionals. The results from \cite{Krein.1977} and \cite{Putinar.1997} are summarized in Section~\ref{SecSummary} and applied in Section~\ref{SecConsequences} to derive uniqueness results for convex bodies.

\subsection{Summary of results from \cite{Krein.1977} and \cite{Putinar.1997}}\label{SecSummary}
Let $N\in\N_0$, $L>0$ and $C \subset \R^n$ be compact. Further, let $m:= (m_\alpha)_{|\alpha|\leq N}$, where $m_\alpha\in\R, \alpha\in\N_0^n$ with $|\alpha|\leq N$ and $\sum_{|\alpha|\leq N} m_\alpha^2>0$.
A function $\Phi\in L^\infty(C)$ with $\|\Phi\|_{\infty,C}\leq L$ is called a solution of the \emph{$L$-moment problem of order $N$} if
\begin{equation}\label{L-momentproblem}
\mu_\alpha(\Phi)=m_\alpha,\quad\alpha\in\N_0^n \text{ with } |\alpha|\leq N.
\end{equation}
In \cite{Krein.1977}, it is shown that the supremum
\begin{equation*}%\label{Definitionlm}
l(m):=\sup\left\{\left\|\sum\limits_{|\alpha|\leq N} a_\alpha x^\alpha\right\|_{1,C}^{-1}:a_\alpha\in\R, \alpha\in\N_0^n, |\alpha|\leq N, \sum\limits_{|\alpha|\leq N}a_\alpha m_\alpha=1
\right\}
\end{equation*}
is attained. Thus, there exists an $\tilde{a}=(\tilde{a}_\alpha)_{|\alpha|\leq N}$ with $\sum\limits_{|\alpha|\leq N} \tilde{a}_\alpha m_\alpha =1$ and
\[l(m)=\left\|\sum\limits_{|\alpha|\leq N} \tilde{a}_\alpha x^\alpha\right\|_{1,C}^{-1}.\]
It follows from \cite{Krein.1977} that the $L$-moment problem \eqref{L-momentproblem} has a solution if and only if $L\geq l(m)$. Furthermore, \eqref{L-momentproblem} has a unique solution if and only if $L=l(m)$. If $L=l(m)$, then the unique solution is $\Phi= L\sign(p_m)$, where $p_m=\sum_{|\alpha|\leq N} \tilde{a}_\alpha x^\alpha$.

For more details and proofs, we refer to \cite[Section IX.1-2]{Krein.1977} and \cite{Putinar.1997}. The one-dimensional case is proved in \cite[Section IX.2, Thm. 2.2]{Krein.1977} by applying more general results from \cite[Section IX.1]{Krein.1977} which are obtained in normed linear spaces with moments defined with respect to arbitrary linear independent functionals instead of monomials. The specialization of \cite[Section IX.1]{Krein.1977} to the situation considered above is contained in \cite[Section 2]{Putinar.1997}. In particular, Putinar \cite{Putinar.1997} formulates the following uniqueness result.
\begin{lem}[{\cite[Cor.2.3]{Putinar.1997}}] \label{CorPutinar}
Let $C\subset\R^n$ be compact. A function $\Phi\in L^\infty(C)$ is uniquely determined in $\{\Psi\in L^\infty(C):\|\Psi\|_{\infty,C}\leq \|\Phi\|_{\infty,C}\}$ by its geometric moments $\mu_\alpha(\Phi)$, $\alpha\in\N_0^n$ with $|\alpha|\leq N$ if and only if
\[
\Phi=\|\Phi\|_{\infty,C} \sign(p),
\]
where $p\neq 0$ is a polynomial of degree at most $N$. 
\end{lem}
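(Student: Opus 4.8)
The plan is to derive the equivalence directly from the summarized results of \cite{Krein.1977} and \cite{Putinar.1997}, using an elementary extremality argument for the implication ``$\Leftarrow$'' and the cited duality theory for the implication ``$\Rightarrow$''. Throughout, write $L:=\|\Phi\|_{\infty,C}$. First I would dispose of the degenerate case $L=0$: then the comparison class $\{\Psi\in L^\infty(C):\|\Psi\|_{\infty,C}\le 0\}$ equals $\{0\}$, so $\Phi=0$ is trivially uniquely determined, and at the same time $\Phi=0=\|\Phi\|_{\infty,C}\sign(p)$ with $p\equiv 1$; thus both sides of the claimed equivalence hold. Hence I may assume $L>0$, i.e.\ $\Phi\neq 0$.

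For ``$\Leftarrow$'', assume $\Phi=L\sign(p)$ with $p$ a nonzero polynomial of degree at most $N$. Since the zero set of a nonzero polynomial is Lebesgue null, $\{p\neq 0\}$ has full measure in $C$ and $\int_C|p(x)|\,dx>0$; moreover $\Phi(x)p(x)=L|p(x)|$ pointwise, so $\int_C\Phi p\,dx=L\int_C|p|\,dx$. Now let $\Psi\in L^\infty(C)$ satisfy $\|\Psi\|_{\infty,C}\le L$ and $\mu_\alpha(\Psi)=\mu_\alpha(\Phi)$ for all $\alpha\in\N_0^n$ with $|\alpha|\le N$. Expanding $p$ in monomials yields $\int_C(\Phi-\Psi)p\,dx=0$, hence
\[
L\int_C|p|\,dx=\int_C\Psi p\,dx\le\int_C|\Psi|\,|p|\,dx\le L\int_C|p|\,dx .
\]
So both inequalities are equalities, which forces $\Psi p\ge 0$ a.e.\ and $|\Psi|=L$ a.e.\ on $\{p\neq 0\}$; combining these, $\Psi=L\sign(p)=\Phi$ a.e.\ on $\{p\neq 0\}$, hence a.e.\ on $C$. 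Thus $\Phi$ is uniquely determined in its norm ball by its geometric moments of order at most $N$.

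For ``$\Rightarrow$'', assume $\Phi$ is uniquely determined in $\{\Psi\in L^\infty(C):\|\Psi\|_{\infty,C}\le L\}$ by $(\mu_\alpha(\Phi))_{|\alpha|\le N}$. First I would note that these moments cannot all vanish: otherwise the zero function lies in the comparison class with the same (zero) moments, contradicting uniqueness since $\Phi\neq 0$. Hence the Krein--Nudelman setting applies with $m:=(\mu_\alpha(\Phi))_{|\alpha|\le N}$, and $\Phi$ is the unique solution of the $L$-moment problem of order $N$. By the summarized results, uniqueness forces $L=l(m)$, and the unique solution must then be $L\sign(p_m)$ with $p_m=\sum_{|\alpha|\le N}\tilde a_\alpha x^\alpha$, where $\sum_{|\alpha|\le N}\tilde a_\alpha m_\alpha=1$. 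In particular the coefficient vector $\tilde a$ is nonzero, so $p_m$ is a nonzero polynomial of degree at most $N$, and $\Phi=L\sign(p_m)=\|\Phi\|_{\infty,C}\sign(p_m)$, as claimed.

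The main obstacle lies entirely in the forward direction, and it is already resolved by the cited theory: extracting an extremal polynomial $p$ from the bare hypothesis of uniqueness is exactly the nontrivial duality/Chebyshev-extremum content of \cite{Krein.1977, Putinar.1997}. Beyond invoking that, the only delicate points are the bookkeeping identifying ``uniquely determined in the norm ball by moments up to order $N$'' with ``unique solution of the $L$-moment problem of order $N$'', and the separate handling of the degenerate cases $\Phi=0$ and all moments vanishing. The backward direction, by contrast, is elementary once one spots the device of testing $\Phi-\Psi$ against $p$ itself.
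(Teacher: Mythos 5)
Your proof is correct. Note that the paper itself gives no proof of this lemma: it is quoted verbatim as Corollary 2.3 of the cited work of Putinar, resting on the Krein--Nudelman facts summarized just before it (attainment of $l(m)$, solvability iff $L\geq l(m)$, uniqueness iff $L=l(m)$, and the form $L\sign(p_m)$ of the unique solution). Your forward direction reproduces exactly that route (including the correct observation that uniqueness rules out all moments vanishing, so the summarized setting with $\sum_{|\alpha|\leq N}m_\alpha^2>0$ applies), while your backward direction replaces the citation by a clean self-contained argument: testing $\Phi-\Psi$ against $p$, using that $\Phi p=L|p|$, that equality in $\int_C\Psi p\,dx\leq L\int_C|p|\,dx$ forces $\Psi p\geq 0$ and $|\Psi|=L$ a.e.\ on $\{p\neq 0\}$, and that the zero set of a nonzero polynomial is Lebesgue null; together with the separate treatment of $\|\Phi\|_{\infty,C}=0$, this is a complete and correct derivation of the cited result.
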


\subsection{Consequences for convex bodies}\label{SecConsequences}
Due to the relation between geometric moments of convex bodies and geometric moments of indicator functions, we conclude the following from Lemma~\ref{CorPutinar}.
\begin{cor}\label{CorPutForConvBod}
Let $C\subset \R^n$ be compact. A convex body $K \subset C$ is uniquely determined in $\{L\in\cK^n: L\subset C \}$ by its geometric moments $\mu_\alpha(K)$, $\alpha\in\N_0^n$ with $|\alpha|\leq N$ if
\[
K= C\cap \{p\geq 0\},
\]
where $p\neq 0$ is a polynomial of degree at most $N$.
\end{cor}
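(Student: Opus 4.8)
The plan is to reduce the statement to Lemma~\ref{CorPutinar}, but applied not to the indicator $\mathbf{1}_K$ itself — an indicator is $\{0,1\}$-valued and hence can never agree a.e.\ with a function of the form $\|\cdot\|_{\infty,C}\sign(q)$, which is $\{-1,1\}$-valued — but to the affine rescaling $\Phi:=2\mathbf{1}_K-\mathbf{1}_C\in L^\infty(C)$. The point is that $\Phi$ is exactly of the form appearing in Lemma~\ref{CorPutinar}, so that lemma pins $\Phi$ down among all functions in the unit ball of $L^\infty(C)$ by its moments up to order $N$; one then only has to check that a competing convex body $L\subset C$ with the same geometric moments as $K$ produces, after the same rescaling, another function in that ball with the same moments.

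First I would verify the hypotheses of Lemma~\ref{CorPutinar} for $\Phi$. Since $K$ has nonempty interior, $\|\Phi\|_{\infty,C}=1$. Writing $K=C\cap\{p\geq 0\}$ with $p$ a nonzero polynomial of degree at most $N$, one has on $C$ that $\Phi(x)=2\cdot 1-1=1=\sign(p)(x)$ for $x\in K$ (where $p(x)\geq 0$) and $\Phi(x)=2\cdot 0-1=-1=\sign(p)(x)$ for $x\in C\setminus K$ (where $p(x)<0$); hence $\Phi=\|\Phi\|_{\infty,C}\sign(p)$ in $L^\infty(C)$. (The degenerate case $K=C$ is covered by taking $p\equiv 1$.) By Lemma~\ref{CorPutinar}, $\Phi$ is the unique element of $\{\Psi\in L^\infty(C):\|\Psi\|_{\infty,C}\leq 1\}$ having the geometric moments $\mu_\alpha(\Phi)$, $\alpha\in\N_0^n$ with $|\alpha|\leq N$.

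Now let $L\in\cK^n$ with $L\subset C$ and $\mu_\alpha(L)=\mu_\alpha(K)$ for all $|\alpha|\leq N$, and set $\Phi_L:=2\mathbf{1}_L-\mathbf{1}_C$. Then $\Phi_L\in L^\infty(C)$ with $\|\Phi_L\|_{\infty,C}\leq 1$, and by linearity of the moment map together with $\mu_\alpha(\mathbf{1}_K)=\mu_\alpha(K)$ and $\mu_\alpha(\mathbf{1}_L)=\mu_\alpha(L)$ we obtain $\mu_\alpha(\Phi_L)=2\mu_\alpha(L)-\mu_\alpha(\mathbf{1}_C)=2\mu_\alpha(K)-\mu_\alpha(\mathbf{1}_C)=\mu_\alpha(\Phi)$ for every $|\alpha|\leq N$ — the moments of $\mathbf{1}_C$, finite because $C$ is compact, simply cancel. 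By the uniqueness just established, $\Phi_L=\Phi$ in $L^\infty(C)$, i.e.\ $\mathbf{1}_L=\mathbf{1}_K$ Lebesgue-a.e., equivalently $V_n\big((K\setminus L)\cup(L\setminus K)\big)=0$. Finally, if some $x\in\intr K\setminus L$ existed, a whole ball around $x$ would lie in $(\intr K)\setminus L\subset K\setminus L$, contradicting that this set is null; hence $\intr K\subset L$, and since $K=\cl(\intr K)$ for a convex body and $L$ is closed, $K\subset L$, and by symmetry $L\subset K$, so $K=L$. The only points requiring a moment's thought are the passage to the $\{-1,1\}$-valued function $2\mathbf{1}_K-\mathbf{1}_C$ needed to meet the hypothesis of Lemma~\ref{CorPutinar}, and this last regularity argument upgrading a.e.\ equality of indicators to equality of the bodies; neither presents a real obstacle, so the proof is short.
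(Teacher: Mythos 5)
Your proposal is correct and follows essentially the same route as the paper: rescale to $2\mathbf{1}_K-1=\sign(p)$ on $C$, apply Lemma~\ref{CorPutinar} in the unit ball of $L^\infty(C)$, and note that the moments of a competing body $L\subset C$ transform the same way so that the moments of $C$ cancel. The only difference is that you spell out the final upgrade from a.e.\ equality of indicators to $K=L$, which the paper leaves implicit.
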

%-------------------------------------------------------------------------
\begin{proof}
If $K=C\cap \{p\geq 0\}$, then
\[2\cdot\mathbf{1}_K(x)-1=\sign(p)(x), \quad x\in C.\]
We apply Lemma~\ref{CorPutinar} with $\Phi=\sign(p)$. 
Since $\|\sign(p)\|_{\infty,C}=1$ we obtain that $2\cdot\mathbf{1}_K-1=\sign(p)$ is uniquely determined
in \[\left\{2\cdot\mathbf{1}_L-1:L\in \cK, L\subset C\right\}\subset\left\{\Psi\in L^\infty(C): \|\Psi\|_{\infty,C}\leq 1\right\}\]
by its geometric moments $\mu_\alpha\left(2\cdot\mathbf{1}_K-1\right)$ with $|\alpha|\leq N$. 
The definitions \eqref{eq:GeomMomSet} and \eqref{eq:GeomMomFunc} of the geometric moments for a compact set and a function, respectively, imply
 $\mu_\alpha\left(2\cdot\mathbf{1}_K-1\right)=2\mu_\alpha(K)-\mu_\alpha(C),\alpha\in\N_0^n$ with $|\alpha|\leq N$. 
Since $C$ is fixed, this yields the assertion.
\end{proof}
%%%%%%%%%%%%%%%%%%%%%%%%%%%%%%%%%%%%%%%%%%%%%%%%%%%%%%%%%%%%%%
\begin{exmp}
An ellipsoid $E$ is determined among all convex bodies by its geometric moments up to order $2$ since $E=\{x\in\R^n: p(x)\geq 0\}$, where
$p(x):=R-\|Tx\|^2$, $x\in\R^n$ with some invertible linear transformation $T$ and some $R>0$.
\end{exmp}
%%%%%%%%%%%%%%%%%%%%%%%%%%%%%%%%%%%%%
\begin{rem}
Corollary~\ref{CorPutForConvBod} gives a sufficient condition for a convex body to be uniquely determined among convex bodies in a prescribed set by a finite number of moments. It is not clear if the condition is also necessary.
\end{rem}
%%%%%%%%%%%%%%%%%%%%%%%%%%%%%%%%%%%%%
\begin{rem}
Let $m:=(m_\alpha)_{|\alpha|\leq N}$ be a finite number of geometric moments of some unknown convex body $K \subset C$. Let $\tilde{m}_{\alpha}:=2m_\alpha-\mu_\alpha(C)$ and define $l(\tilde{m})$ and $p_{\tilde{m}}$ as in the previous section. Then it holds that $l(\tilde{m})\leq 1$, and if $l(\tilde{m})=1$, then $K=C\cap \{p_{\tilde{m}}\geq 0\}$.
\end{rem}

In Theorems~\ref{UniqinC} and \ref{ThmUnique}, we show that the convex bodies which are uniquely determined among all convex bodies by finitely many geometric moments form a dense subset of $\cK^n$ with respect to the Hausdorff metric $\dH$. The ideas of the proofs are summarized in the following. For a convex body $K\subset C$, a function $f \colon \R^n \to \R$ with $K=C\cap \{f\geq 0\}$ is constructed. The function $f$ is approximated by a polynomial $p_m$ of degree $m$ in such a way that $K_m:=C\cap \{p_m\geq 0\}$ is convex and $\dH(K,K_m)$ is small.
Then, it follows from Corollary \ref{CorPutForConvBod} that $K_m$ is uniquely determined by its geometric moments up to order $m$ among all convex bodies contained in $C$.
The circumradius of $K_m$ admits an upper bound which can be expressed in terms of the geometric moments $\mu_\alpha(K_m), |\alpha|\leq 2$ of $K_m$. Therefore, $K_m$ is uniquely determined by its geometric moments up to order $m$ among all convex bodies if $C$ is large enough.

 Note firstly that we can assume that $K$ is of class $C^\infty_+(\cK^n)$, see \cite[Thm. 3.4.1]{Schneider14} and the subsequent discussion. Hence, the boundary of $K$ is a regular submanifold of $\R^n$ of class $C^k(\cK^n)$ for all $k \in \N_0$. Further, the principal curvatures of $K$ are strictly positive. By $\kappa_{i}(K,x), 1\leq i\leq n-1$ we denote the principal curvatures of $K$ at $x\in\partial K$.
Since $K\in C^\infty_+(\cK^n)$ there exist $m_K,M_K>0$ such that \[\kappa_i(K,x)\in(m_K,M_K),\quad x\in\partial K,1\leq i\leq n-1.\] For $\varepsilon < M_K^{-1}$ it follows from the inverse function theorem applied as in \cite[Lemma 14.16]{Gilbarg.2001} that the signed distance function $d_K$ of $K$ is an infinitely differentiable function in $\R^n \setminus K_{-\varepsilon}$.
%Then, the spherical image map $\nu:\partial K\rightarrow S^{n-1}$ is in $C^\infty(\cK^n)$ and the curvatures are the eigenvalues of its Jacobian matrix $J\nu$. Since $K$ is compact there exist $M_1, M_2>0$ with
%$ \sum\limits_{i=1}^n \kappa_i(K,x)= \tr J\nu (x)\leq M_1$ and
%$\prod\limits_{i=1}^n \kappa_i(K,x)=\det J\nu(x) \geq M_2$. In particular
%\[\frac{M}{M_1^{n-1}}\leq \kappa_i(K,x)\leq M_1,\quad x\in K, 1\leq i\leq n-1.\]
%
As in \cite[Sec. 14.6]{Gilbarg.2001} we define for $y\in\partial K$ the principal coordinate system at $y$ as the coordinate system with coordinate axes $x_1(y),\ldots,x_n(y)$, where $x_1(y),\ldots, x_{n-1}(y)$ are the principal directions  and $x_n(y)$ is the inner unit normal vector of $K$ at $y$.
Then the following lemma is obtained by adapting \cite[Lemma 14.17]{Gilbarg.2001}.
\begin{lem}\label{LemHessianDistance}
Let $K\in C^\infty_+(\cK^n)$ and $M_K>0$ be such that $\kappa_i(K,x)\leq M_K$ for all $x\in\partial K$ and $1\leq i\leq n-1$. Further, let $\varepsilon < M_K^{-1}$, $x_0\in \R^n \setminus K_{-\varepsilon}$ and  $y_0:=\mathop{\argmin}_{y \in \partial K} \left\| x_0- y \right\|.$
Then, with respect to the principal coordinate system at $y_0$, we have
\[
\nabla d_K(x_0)=(0,\ldots ,0,-1)^\top
\]
and
\[\left(\frac{\partial^2}{\partial_i\partial_j}d_K(x_0)\right)_{i,j=1}^n =\diag\left(\frac{\kappa_1(K,y_0)}{1+\kappa_1(K,y_0)d_K(x_0)},\ldots,\frac{\kappa_{n-1}(K,y_0)}{1+\kappa_{n-1}(K,y_0)d_K(x_0)} , 0\right).\]
\end{lem}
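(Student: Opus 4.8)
The plan is to follow the adaptation of \cite[Lemma~14.17]{Gilbarg.2001} indicated above, carrying out the whole computation in the principal coordinate system at $y_0$; thus $y_0$ is the origin, $x_1(y_0),\dots,x_{n-1}(y_0)$ are the principal directions of $K$ at $y_0$, and $x_n(y_0)=e_n$ is the inner unit normal of $K$ at $y_0$. The first step is to put $\partial K$ into normal form near $y_0$: since $K\in C^\infty_+$, the boundary is, near $y_0$, the graph $x_n=\varphi(x_1,\dots,x_{n-1})$ of a $C^\infty$ function with $\varphi(0)=0$, $\nabla\varphi(0)=0$ and $D^2\varphi(0)=D_0$, where $D_0:=\diag\big(\kappa_1(K,y_0),\dots,\kappa_{n-1}(K,y_0)\big)$, and $K$ lies locally on the side $\{x_n\geq\varphi(x')\}$. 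Writing $\nu(y)$ for the inner unit normal of $K$ at $y\in\partial K$, the graph expression $\nu=(-\nabla\varphi,1)/\sqrt{1+|\nabla\varphi|^2}$ together with $\nabla\varphi(0)=0$ yields $\nu(y_0)=e_n$ and $D\nu(y_0)|_{\spa(e_1,\dots,e_{n-1})}=-D_0$.

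Next I would invoke that, by the inverse function theorem argument referred to above (as in \cite[Lemma~14.16]{Gilbarg.2001}), on $\R^n\setminus K_{-\varepsilon}$ the metric projection $\pi(x):=\argmin_{y\in\partial K}\|x-y\|$ onto $\partial K$ is single-valued and of class $C^\infty$, the signed distance $d_K$ is $C^\infty$ there, and the identities
\[
\nabla d_K(x)=-\nu(\pi(x)),\qquad x=\pi(x)+d_K(x)\,\nabla d_K(x)
\]
hold: the first because the signed distance increases at unit rate in the direction away from $K$, which at $x$ is the outer normal $-\nu(\pi(x))$ at the nearest boundary point; the second because $x$ lies on the normal ray through $\pi(x)$ at signed distance $d_K(x)$. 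Evaluating the first identity at $x_0$ gives $\nabla d_K(x_0)=-\nu(y_0)=(0,\dots,0,-1)^\top$, which is the first assertion. Differentiating the eikonal relation $|\nabla d_K|^2\equiv1$ gives $D^2d_K(x_0)\,\nabla d_K(x_0)=0$, so in the present coordinates the last row and column of the symmetric matrix $D^2d_K(x_0)$ vanish; denote its remaining $(n-1)\times(n-1)$ block by $H'$.

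It remains to identify $H'$. Differentiating $\nabla d_K=-\nu\circ\pi$ at $x_0$ gives $D^2d_K(x_0)=-D\nu(y_0)\,D\pi(x_0)$, and differentiating the second identity gives $I=D\pi(x_0)+\nabla d_K(x_0)\nabla d_K(x_0)^\top+d_K(x_0)\,D^2d_K(x_0)$. Restricting both relations to $\spa(e_1,\dots,e_{n-1})$ (into which $D\pi(x_0)$ and $D\nu(y_0)$ both map, since $D\pi(x_0)$ takes values in $T_{y_0}\partial K$), inserting $\nabla d_K(x_0)\nabla d_K(x_0)^\top=\diag(0,\dots,0,1)$ and $D\nu(y_0)|_{\spa(e_1,\dots,e_{n-1})}=-D_0$, and eliminating $D\pi(x_0)$, one arrives at $\big(I_{n-1}+d_K(x_0)D_0\big)H'=D_0$. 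Since $\varepsilon<M_K^{-1}$, $\kappa_i(K,y_0)<M_K$ and $d_K(x_0)>-\varepsilon$, we have $1+\kappa_i(K,y_0)\,d_K(x_0)>0$ for every $i$, so $I_{n-1}+d_K(x_0)D_0$ is invertible and
\[
H'=(I_{n-1}+d_K(x_0)D_0)^{-1}D_0=\diag\!\left(\frac{\kappa_i(K,y_0)}{1+\kappa_i(K,y_0)\,d_K(x_0)}\right)_{i=1}^{n-1},
\]
which is the second assertion (the cases $x_0\in\intr K$, $x_0\in\partial K$ and $x_0\notin K$ being treated uniformly).

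I expect the main obstacle to be the first, more geometric part rather than the linear algebra: one must fix the signs in the normal form of $\partial K$ — hence in $D^2\varphi(0)$ and in $D\nu(y_0)$ — so that they are consistent with the convention under which the principal curvatures of a body of class $C^\infty_+$ are positive, and one must justify that $\pi$ is single-valued and $C^\infty$ on \emph{all} of $\R^n\setminus K_{-\varepsilon}$. On the exterior of $K$ this is immediate from convexity (and there are no focal points on that side), whereas on the inner collar $K\setminus K_{-\varepsilon}$ it is precisely the hypothesis $\varepsilon<M_K^{-1}$ that places every such $x_0$ within the reach of $\partial K$ — no focal point of $\partial K$ is encountered before distance $1/M_K$ — so that the inverse function theorem applies as in \cite[Lemma~14.16]{Gilbarg.2001}. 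Once this is set up, the rest is the elementary computation above, whose only delicate point — the invertibility of $I_{n-1}+d_K(x_0)D_0$ — is again exactly what $\varepsilon<M_K^{-1}$ provides.
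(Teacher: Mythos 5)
Your proof is correct and follows essentially the same route the paper intends: the paper gives no independent argument but simply states that the lemma "is obtained by adapting \cite[Lemma 14.17]{Gilbarg.2001}" (with smoothness of $d_K$ from the inverse-function-theorem argument of \cite[Lemma 14.16]{Gilbarg.2001}), and your write-up is a faithful execution of exactly that adaptation — graph normal form at $y_0$, single-valuedness and smoothness of the metric projection on $\R^n\setminus K_{-\varepsilon}$ under $\varepsilon<M_K^{-1}$, and differentiation of $\nabla d_K=-\nu\circ\pi$ and $x=\pi(x)+d_K(x)\nabla d_K(x)$ together with the eikonal equation — with the sign conventions (inner normal, signed distance) handled consistently with the paper's statement.
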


By the described approximation argument and Lemma~\ref{LemHessianDistance}, we obtain the following result.

\begin{thm}\label{UniqinC}
Let $K, C$ be convex bodies in $\R^n$ with $K\subset  \intr C$. For $\varepsilon >0$ there exists an $m\in\N$ and a convex body $K_m\subset C$ which is uniquely determined by its geometric moments up to order $m$ among all convex bodies contained in $C$ and fulfils
\[\dH(K,K_m)\leq \varepsilon .\]
\end{thm}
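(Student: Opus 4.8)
The plan is to reduce, via the preliminary reductions already stated in the text, to the case where $K \in C^\infty_+$ (using \cite[Thm. 3.4.1]{Schneider14}, a fact invoked just above the statement), and then to build an explicit polynomial approximation of the signed distance function $d_K$. Since $K \subset \intr C$, we may fix a small $\varepsilon_0 > 0$ so that the $2\varepsilon_0$-parallel body $K_{2\varepsilon_0}$ is still contained in $\intr C$; we will only ever perturb $K$ within this collar. The first step is to take $f := d_K$ restricted to a neighbourhood of $K$ (or, more conveniently, $f := -d_K$ so that $K = \{f \ge 0\}$ locally and $K = C \cap \{f \ge 0\}$), and to recall from the discussion preceding the statement that, for $\varepsilon < M_K^{-1}$, the function $d_K$ is $C^\infty$ on $\R^n \setminus K_{-\varepsilon}$, hence on an open neighbourhood of $\partial K$. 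By the Stone–Weierstrass theorem there is a polynomial $p_m$ of some degree $m$ with $\|p_m - f\|_{\infty}$ and $\|\nabla p_m - \nabla f\|_{\infty}$ both as small as we like on the compact collar $K_{\varepsilon_0} \setminus \intr K_{-\varepsilon_0}$ (first approximate the gradient components and integrate, or approximate $f$ in $C^1$ directly). Replacing $p_m$ by $p_m + c$ for a tiny constant $c$ and intersecting with $C$, one arranges that $\partial(C \cap \{p_m \ge 0\})$ lies inside this collar and that $\{p_m = 0\}$ is a smooth hypersurface there (the gradient of $p_m$ is close to $\nabla d_K$, which is a unit vector, so $\nabla p_m \ne 0$ on the collar).

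The key step is to verify that $K_m := C \cap \{p_m \ge 0\}$ is \emph{convex} when the $C^2$-approximation is good enough. This is where Lemma~\ref{LemHessianDistance} enters: on the collar, $d_K$ has, at each point $x_0$, Hessian eigenvalues $\kappa_i(K,y_0)/(1 + \kappa_i(K,y_0) d_K(x_0))$ which are uniformly bounded away from $0$ and $\infty$ (since the $\kappa_i$ lie in $(m_K, M_K)$ and $|d_K|$ is bounded by $\varepsilon_0 < M_K^{-1}$ on the collar), in the principal coordinate directions tangent to the level set, with a zero eigenvalue in the gradient direction. The standard computation of the second fundamental form of the level set $\{p_m = 0\}$ in terms of the Hessian of $p_m$ and its gradient shows that if $p_m$ is $C^2$-close to $d_K$ then the level set $\{p_m = 0\}$ has second fundamental form close to that of the corresponding level set of $d_K$, hence positive definite, so the superlevel set $\{p_m \ge 0\}$ near the boundary is locally convex; combined with the fact that it is a connected bounded set whose boundary is a smooth hypersurface with positive curvature, it is convex, and intersecting with the convex body $C$ keeps it convex (indeed the boundary of $K_m$ near $\partial C$ behaves like $\partial C$). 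The Hausdorff bound $\dH(K, K_m) \le \varepsilon$ follows because $\{p_m = 0\}$ lies within the $\varepsilon_0$-collar of $\partial K$, and $\varepsilon_0$ was chosen small; more carefully, a small uniform $|p_m - d_K|$ on the collar forces $\partial K_m$ to be within $O(\varepsilon_0)$ of $\partial K$ in Hausdorff distance, and we simply shrink $\varepsilon_0$ at the outset so that this is $\le \varepsilon$.

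Finally, uniqueness is immediate: $K_m = C \cap \{p_m \ge 0\}$ with $p_m$ a nonzero polynomial of degree $m$, so Corollary~\ref{CorPutForConvBod} gives that $K_m$ is uniquely determined among all convex bodies contained in $C$ by its geometric moments up to order $m$.

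I expect the main obstacle to be the convexity verification in the second step — i.e., turning "$p_m$ is $C^2$-close to $d_K$ on the collar" into "$\{p_m \ge 0\}$ is globally convex." One must be careful that a $C^2$-small perturbation of $d_K$ could in principle create spurious components of $\{p_m \ge 0\}$ away from the collar, or fail to close up into a convex body near where the level set meets $\partial C$; handling the behaviour at $\partial C$ (where the two bounding hypersurfaces $\{p_m = 0\}$ and $\partial C$ meet) and ruling out extra components requires choosing the approximation to control $p_m$ on all of $C$, not just the collar — e.g. ensuring $p_m < 0$ on $C \setminus K_{\varepsilon_0}$ and $p_m > 0$ on $K_{-\varepsilon_0}$, which is possible because $d_K$ is bounded away from $0$ there. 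The curvature-propagation estimate itself, while conceptually the heart of the matter, is then a routine (if slightly lengthy) computation using Lemma~\ref{LemHessianDistance}, the formula for the second fundamental form of a level set, and uniform continuity on the compact collar.
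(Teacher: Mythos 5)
Your overall skeleton coincides with the paper's (reduce to $K\in C^\infty_+$, approximate a defining function by a polynomial in $C^2$ on a collar of $\partial K$, force the sign $p_m>0$ on $K_{-\varepsilon_0}$ and $p_m<0$ on $C\setminus K_{\varepsilon_0}$, set $K_m=C\cap\{p_m\ge 0\}$, and get uniqueness from Corollary~\ref{CorPutForConvBod}), but your convexity mechanism is genuinely different. The paper deliberately does \emph{not} approximate $-d_K$: since the Hessian of $d_K$ has a zero eigenvalue in the normal direction (Lemma~\ref{LemHessianDistance}), a $C^2$-small perturbation of $-d_K$ need not be concave near $\partial K$. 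Instead it builds the global $C^3$ function $f=1$ on $K_{-2\varepsilon}$, $f=1-(d_K+2\varepsilon)^4/(16\varepsilon^4)$ elsewhere, whose Hessian is strictly negative definite on $(\partial K)_\varepsilon$ with the uniform bound \eqref{EigenvalueBound}; this survives the $C^2$-perturbation \eqref{PolynomialApprox} (Lipschitz continuity of eigenvalues), and convexity of $K_m$ is then proved by an elementary segment-by-segment analysis combining \eqref{ConcaveOnBoundNeigh} with convexity of $K_{-\varepsilon}$. You sidestep the degenerate normal direction by estimating the second fundamental form of the level set $\{p_m=0\}$ rather than demanding concavity of $p_m$; the pointwise curvature estimate you sketch is sound (tangential Hessian eigenvalues of $d_K$ are uniformly positive on the collar, and restriction to a nearby tangent plane plus a small Hessian perturbation preserves positivity), and this route avoids any need for the quantitative rate from \cite{Bagby.2002} — a qualitative $C^2$ approximation of some degree $m$ suffices for the statement.

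Two steps, however, are asserted where the real work lies. First, the local-to-global step: ``connected bounded set whose boundary is a smooth hypersurface with positive curvature is convex'' is a Hadamard-type theorem requiring the boundary to be a closed, connected, \emph{embedded} hypersurface (in the plane, strictly positive curvature of a closed curve does not give convexity without simplicity), and you have not shown that $\{p_m=0\}$ inside the collar is connected and embedded. This can be repaired — e.g.\ since $\nabla p_m$ is close to $\nabla(-d_K)$, $p_m$ is strictly decreasing along outward normal lines through the collar, so the zero set is a graph over $\partial K$ in normal coordinates — or one can argue directly on segments as the paper does, but some such argument must be supplied. Second, you need a \emph{single} polynomial that is simultaneously $C^2$-close to $-d_K$ on the collar and uniformly close on all of $C$ (for the sign control ruling out spurious components); $-d_K$ is only Lipschitz away from the collar, Stone--Weierstrass by itself gives no derivative control, and ``approximate the gradient components and integrate'' does not work in dimension $\ge 2$ (the approximants need not form a gradient field). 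The standard fix is to first replace $-d_K$ by a globally $C^2$ (or $C^3$) function agreeing with it near $\partial K$ — which is precisely what the paper's capped quartic $f$ accomplishes — or to mollify globally before invoking a $C^2$ polynomial approximation theorem. With these two repairs your proposal yields a correct, and in the approximation step somewhat more elementary, proof.
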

%-------------------------------------------------------------------------------
\begin{proof}
We may assume $K\in C^\infty_+(\cK^n)$, $2\varepsilon  < M_K^{-1}$ and $K_\varepsilon \subset C$.
We have $K=\{f\geq 0\}$   for the function $f:\R^n \rightarrow \R$ defined by
\begin{equation*}%\label{Definitionf}
f(x):=\begin{cases} 1,& x\in K_{-2\varepsilon },\\
-\frac{(d_K(x)+2\varepsilon )^4}{16\varepsilon ^4}+1,& x\in \R^n\setminus K_{-2\varepsilon }.\end{cases}
\end{equation*}
Observe that $f$ is of class $C^3(\R^n)$ and
\begin{equation}\label{Rangef}
f(x)\in [-65/16,15/16]\quad\Leftrightarrow \quad x\in (\partial K)_\varepsilon .
\end{equation}
The Hessian matrix $(\frac{\partial^2}{\partial_i\partial_j}f(x))_{1\leq i,j\leq n}$ is negative definite for $x\in(\partial K)_\varepsilon $.
Namely, let  $x_0\in (\partial K)_\varepsilon $ and  $y_0:=\mathop{\argmin_{y \in \partial K}} \left\| x_0 - y\right \|.$
Then it follows from Lemma \ref{LemHessianDistance} that, with respect to the principal coordinate system at $y_0$,
\begin{align*}
\frac{\partial^2}{\partial_i\partial_j} f(x_0)
%=\frac{\partial^2}{\partial_i\partial_j}[-\frac{(d_K+2\varepsilon )^4}{16\varepsilon ^4}+1](x_0)
%=\frac{\partial}{\partial_i}\left[-4\frac{(d_K+2\varepsilon )^3}{16\varepsilon ^4}\frac{\partial}{\partial_j}d_K\right](x_0)\\
%%
%&=-12 \frac{(d_K(x_0)+2\varepsilon )^2}{16\varepsilon ^4}\frac{\partial}{\partial_i}d_K(x_0) \frac{\partial}{\partial_j}d_K(x_0)
%-4\frac{(d_K(x_0)+2\varepsilon )^3}{16\varepsilon ^4}\frac{\partial^2}{\partial_i\partial_j}d_K(x_0)\\
%%
&=\begin{cases} -\frac{(d_K(x_0)+2\varepsilon )^3}{4\varepsilon ^4}\frac{\kappa_i(K,y_0)}{1+\kappa_i(K,y_0)d_K(x_0)},& i=j<n,\\
-\frac{3(d_K(x_0)+2\varepsilon )^2}{4\varepsilon ^4},&i=j=n, \\
0 & i \neq j.
\end{cases}
\end{align*}
Therefore, the eigenvalues of the Hessian matrix $\left(\frac{\partial^2}{\partial_i\partial_j}f(x)\right)$ for $x\in(\partial K)_{\varepsilon }$ are all negative and their absolute values are uniformly bounded from below by
\begin{equation}\label{EigenvalueBound}
\min\left\{\frac{ m_K}{4\varepsilon (1+M_K\varepsilon )}, \frac{3}{4\varepsilon ^2} \right\}.
\end{equation}

From \cite[Thm. 2]{Bagby.2002}, we obtain that
for every $m\geq 2$ there exists a polynomial $p_m$ of degree $m$ such that
\begin{equation}\label{PolynomialApprox}
\left\|\frac{\partial^{\lvert \alpha \rvert}}{\partial_1^{\alpha_1} \cdots \partial_n^{\alpha_n}}(f-p_m)\right\|_{\infty,C}\leq c(n,C,f) \frac{1}{m^{3-|\alpha|}},\quad \alpha\in\N_0^n \text{ with } |\alpha|\leq 2,
\end{equation}
where $c(n,C,f)>0$ depends on $n$, $C$ and $\max_{|\alpha|\leq 3}\|\frac{\partial^{\lvert \alpha \rvert}}{\partial_1^{\alpha_1} \cdots \partial_n^{\alpha_n}} f\|_{\infty,C}$.

As the function that maps a symmetric matrix to its eigenvalues is Lipschitz continuous (\cite[Thm. VI.2.1]{Bhatia.1997}), the bounds~\eqref{EigenvalueBound} and \eqref{PolynomialApprox} imply that the Hessian matrix $\left(\frac{\partial^2}{\partial_i\partial_j}p_m\right)_{1\leq i,j\leq n}$ of $p_m$ is negative definite on
$(\partial K)_\varepsilon $ if we choose $m\geq 2$ sufficiently large. Thus, by the well-known convexity criterion \cite[Thm. 1.5.13]{Schneider14}, the polynomial $p_m$ is concave on every convex subset of $(\partial K)_\varepsilon $.
Let
\[  K_m:=C\cap \{p_m\geq 0\},\]
then it follows
\begin{equation}\label{ConcaveOnBoundNeigh}
\left( p_m(x),p_m(y)\geq 0 \text{ for } x,y\in (\partial K)_\varepsilon  \text{ with } [x,y]\subset (\partial K)_\varepsilon  \right) \Rightarrow [x,y]\subset K_m.
 \end{equation}

Due to $\eqref{PolynomialApprox}$, we can furthermore assume that
\[
\|f-p_m\|_{\infty,C}< 15/16.
\]
Then it follows from \eqref{Rangef} that $p_m\geq0$ on $K_{-\varepsilon }$ and $p_m<0$ on $C\setminus K_\varepsilon $. In other words, we have $K_{-\varepsilon }\subset K_m\subset K_{\varepsilon }$. This implies that $\dH(K,K_m)\leq \varepsilon $ since $\varepsilon < M_K^{-1}$. 

Furthermore, we can show that $K_m$ is convex by distinguishing the following four cases. Let $x,y \in K_m $. 
\begin{enumerate}
\item If $x,y\in K_{-\varepsilon }$, then $[x,y]\subset K_{-\varepsilon }$ since $K_{-\varepsilon }$ is convex and thus $[x,y]\subset K_m$. 
\item If $x\in K_{-\varepsilon }$ and $y\in (\partial K)_\varepsilon $, there is a $z\in [x,y]$ such that
$[x,z]\subset K_{-\varepsilon }$ and $[z,y]\subset (\partial K)_\varepsilon $. Hence, it follows from \eqref{ConcaveOnBoundNeigh} that $[x,y]\subset K_m$.  
\item If $x,y\in (\partial K)_\varepsilon $ and $[x,y]\subset (\partial K)_\varepsilon $, then $[x,y]\subset K_m$ because of \eqref{ConcaveOnBoundNeigh}. 
\item If $x,y \in (\partial K)_\varepsilon $ and $[x,y]\cap K_{-\varepsilon }\neq \emptyset$, there are $z_1, z_2\in [x,y]$ such that $[x,z_1]\subset (\partial K)_\varepsilon , [z_1,z_2]\subset K_{-\varepsilon }$ and $[z_2,y]\subset (\partial K)_\varepsilon $. Then, it follows again from the convexity of $K_{-\varepsilon }$ and by \eqref{ConcaveOnBoundNeigh} that $[x,y]\subset K_m$.
\end{enumerate}
\end{proof}
%%%%%%%%%%%%%%%%%%%%%%%%%%%%%%%%%%%%%%%%%%%%%%%%%%%%%%%%%%%%%%%%%%%%%%%%%%%%%%%
We define
\[
\tilde{K}:=V_n(K)^{-1/n} (K-s(K)).
\]
As $V_n(\tilde{K})=1$ and $s(\tilde{K})=0$, a special case of {\cite[Lem. 4.1]{Paouris.2003}} yields that
\begin{equation*}%\label{SupFctEst}
\left(\int\limits_{\tilde{{K}}}|\langle x, u\rangle|^2 dx\right)^{1/2} \geq \left(\frac{\Gamma(3)\Gamma(n)}{2e \Gamma(n+3)}\right)^{1/2}
\max\{h_{\tilde{K}}(u),h_{\tilde{K}}(-u)\}
\end{equation*}
for $u\in S^{n-1}$. Then the Cauchy-Schwarz inequality implies that 
\[\tilde{K}\subset  I_2(\tilde{K})\left(\frac{\Gamma(3)\Gamma(n)}{2e \Gamma(n+3)}\right)^{-1/2} B^n,\]
where
\begin{equation*}
I_2(L):=\left( \int_L \lVert x \rVert^2 dx \right)^{\frac{1}{2}}
\end{equation*} 
for a convex body $L$. Since $R(K)=(V_n(K))^{1/n}R(\tilde{K})$, we obtain an upper bound
\begin{equation}\label{CircumEst}
R(K)\leq \left(\frac{2e \Gamma(n+3)}{\Gamma(3)\Gamma(n)}\right)^{1/2}
V_n(K)^{1/n}I_2(\tilde{K})
\end{equation}
of the circumradius of $K$.
%%%%%%%%%%%%%%%%%%%%%%%%%%%%%%%%%%%%%%%%%%%%%%%%%%%%%%%%%%%%%%%%%%%%%%%%%%%
\begin{lem}\label{ReprOfI2}
Let $K\in\cK^n$, then $I_2(\tilde{K})$ can be expressed in terms of the geometric moments of $K$ up to order $2$ by
\[
I_2(\tilde{K})=\mu_0(K)^{-\frac{1+n}{n}}\left(\sum\limits_{j=1}^n \mu_0(K)\mu_{2e_j}(K)-\mu_{e_j}(K)^2 \right)^{1/2},
\]
where $\{e_1,\ldots,e_n\}$ is the standard basis in $\R^n$.
\end{lem}
%------------------------------------------------------------------------------
\begin{proof}

For $\alpha>0$ and $\beta\in\R^n$ the transformation formula and the definition of the geometric moments imply 
\begin{align*}
I_2(\alpha(K-\beta))&=\alpha^{\frac{n}{2}+1}\left(\sum\limits_{j=1}^n \mu_{2 e_j}(K)-2\beta_j\mu_{e_j}(K) +\beta_j^2\mu_0(K)\right)^{\frac{1}{2}}.
\end{align*}
Furthermore, it holds
\[
V_n(K)=\mu_0(K)
\]
and the $j$th coordinate of the center of mass of $K$ is 
\begin{align*}
s(K)_j&= \frac{1}{V_n(K)}\int\limits_{K} x_j dx= \frac{\mu_{e_j}(K)}{\mu_0(K)}.
\end{align*}
Thus, we obtain the assertion by choosing $\alpha=V_n(K)^{-\frac{1}{n}}$ and $\beta=s(K)$.
\end{proof}
%%%%%%%%%%%%%%%%%%%%%%%%%%%%%%%%%%%%%%%%%%%%%%%%%%%%%%%%%%%%%%%%%%%%%%%%%%%%%%%%

The previous considerations allow us to formulate a strengthened version of Theorem~\ref{UniqinC} for the whole class of convex bodies and not only those contained in a prescribed compact set.  

\begin{thm}\label{ThmUnique}
Let $K$ be a convex body in $\R^n$. For $\varepsilon >0$ there exists an $m\in\N$ and a convex body $K_m$ which is uniquely determined by its geometric moments up to order $m$ among all convex bodies and fulfils
\[\dH(K,K_m)\leq \varepsilon .\]
\end{thm}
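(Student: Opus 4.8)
The plan is to upgrade Theorem~\ref{UniqinC} by means of the circumradius estimate \eqref{CircumEst}: I will choose a fixed ball $C$ so large that \emph{every} convex body sharing its geometric moments up to order $m$ with the body $K_m$ produced by Theorem~\ref{UniqinC} is forced to lie inside $C$; then the uniqueness among convex bodies contained in $C$, which Theorem~\ref{UniqinC} already supplies, upgrades to uniqueness among all convex bodies. Since the uniqueness assertion depends only on $K_m$, it suffices to treat arbitrarily small $\varepsilon$, so I may assume $\varepsilon$ is small enough that the inner parallel set $K_{-\varepsilon}$ is a convex body; set $v_0:=V_n(K_{-\varepsilon})>0$ and $\rho_1:=\max_{x\in K+\varepsilon B_n}\|x\|$. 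Combining \eqref{CircumEst} with Remark~\ref{ReprOfI2} and using $\mu_{e_j}(L)^2\geq 0$ together with $V_n(L)^{1/n}\mu_0(L)^{-(1+n)/n}=\mu_0(L)^{-1}$, I obtain, for every convex body $L$,
\[
R(L)\ \leq\ C_0\,V_n(L)^{-1/2}\Big(\textstyle\int_L\|x\|^2\,dx\Big)^{1/2},\qquad C_0:=\Big(\tfrac{2e\,\Gamma(n+3)}{\Gamma(3)\Gamma(n)}\Big)^{1/2}.
\]
In particular, any convex body $M$ with $K_{-\varepsilon}\subseteq M\subseteq K+\varepsilon B_n$ satisfies $R(M)\leq\rho:=C_0\,v_0^{-1/2}\,V_n(K+\varepsilon B_n)^{1/2}\,\rho_1$, a constant depending only on $K$ and $\varepsilon$.

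With $\rho$ fixed, I set $C:=(\rho_1+\rho)B_n$, note that $K\subseteq K+\varepsilon B_n\subseteq\rho_1 B_n\subseteq\intr C$, and apply Theorem~\ref{UniqinC} to $K$ and this $C$. This yields an $m\in\N$ (with $m\geq 2$, by the construction in the proof of Theorem~\ref{UniqinC}) and a convex body $K_m\subseteq C$ with $\dH(K,K_m)\leq\varepsilon$ that is uniquely determined among convex bodies contained in $C$ by its geometric moments up to order $m$. From $\dH(K,K_m)\leq\varepsilon$ I obtain $K_{-\varepsilon}\subseteq K_m\subseteq K+\varepsilon B_n$ (the left inclusion by cancellation of Minkowski summands), hence $R(K_m)\leq\rho$ and $\|s(K_m)\|\leq\rho_1$. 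Now let $L$ be any convex body with $\mu_\alpha(L)=\mu_\alpha(K_m)$ for all $|\alpha|\leq m$. Since $m\geq 2$, the moments of $L$ and $K_m$ of order at most $2$ coincide, so $s(L)=s(K_m)$ (the centroid equals $(\mu_{e_1}(\cdot)/\mu_0(\cdot),\dots,\mu_{e_n}(\cdot)/\mu_0(\cdot))$) and the displayed inequality applied to $L$ gives $R(L)\leq C_0\,V_n(K_m)^{-1/2}(\int_{K_m}\|x\|^2\,dx)^{1/2}\leq\rho$. Therefore $L\subseteq s(K_m)+\rho B_n\subseteq(\rho_1+\rho)B_n=C$, and the uniqueness of $K_m$ among convex bodies contained in $C$ forces $L=K_m$. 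Thus $K_m$ is uniquely determined by its geometric moments up to order $m$ among all convex bodies and satisfies $\dH(K,K_m)\leq\varepsilon$, which is the claim.

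The only delicate point --- and the step I expect to be the sole genuine obstacle --- is logical rather than computational: the radius $\rho$ of the ball $C$ must be produced before, and independently of, the application of Theorem~\ref{UniqinC}, since otherwise $C$ would depend on $K_m$, which depends on $C$. This is why I extract $\rho$ solely from the two-sided inclusion $K_{-\varepsilon}\subseteq M\subseteq K+\varepsilon B_n$, which holds for any $M$ at Hausdorff distance at most $\varepsilon$ from $K$ no matter which $C$ is fed into Theorem~\ref{UniqinC}; the remaining ingredients (the algebraic simplification of \eqref{CircumEst}, the centroid identity $s(L)_j=\mu_{e_j}(L)/\mu_0(L)$, and the implication $\dH(K,K_m)\leq\varepsilon\Rightarrow K_{-\varepsilon}\subseteq K_m$) are routine.
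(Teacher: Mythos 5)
Your argument is essentially the paper's own proof: apply Theorem~\ref{UniqinC} with a ball $C$ chosen in advance, where the radius comes from the circumradius bound \eqref{CircumEst} together with Remark~\ref{ReprOfI2} (moments up to order $2$ control $R(L)$, and the first-order moments fix the centroid), so that any $L$ sharing the moments of $K_m$ is trapped inside $C$; your simplification $R(L)\leq C_0 V_n(L)^{-1/2}(\int_L\|x\|^2dx)^{1/2}$ and the inclusion $K_{-\varepsilon}\subseteq K_m\subseteq K+\varepsilon B_n$ are both correct. One step is stated slightly wrongly: from $R(L)\leq\rho$ and $s(L)=s(K_m)$ you conclude $L\subseteq s(K_m)+\rho B_n$, but the circumcenter need not be the centroid; what you actually get is $\|x-s(L)\|\leq 2R(L)$ for $x\in L$ (centroid lies in the circumscribed ball), hence $L\subseteq s(K_m)+2\rho B_n$ --- a thin triangle shows the factor $2$ cannot be dropped. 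This is harmless for your scheme, since $\rho$ is produced before $C$ is chosen: take $C:=(\rho_1+2\rho)B_n$ and the argument closes exactly as before; the paper's proof makes the same move via $\sup_{x\in L}\|x-s(L)\|+\|s(K_m)\|\leq 3R+\varepsilon$, which is where its factor $3R$ comes from.
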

%-------------------------------------------------------------------------------
\begin{proof}
Without loss of generality we may assume that $K\in C^\infty_+(\cK^n)$ and $V_n(K_{-\varepsilon })>0$.
Let\[
c(K,\varepsilon ):= \left(\frac{e \Gamma(n+3)}{\Gamma(3)\Gamma(n)}
\frac{2^{n+3} \, \omega_n}{n+2} V_n(K_{-\varepsilon})^{-1}R(K_{\varepsilon})^{n+2}\right)^{1/2},
\]
and choose \begin{equation}\label{RCond}
R>c(K,\varepsilon )
\end{equation}
such that $K \subset RB^n$. By Theorem~\ref{UniqinC} there exists an $m\in\N$ and a convex body $K_m\subset (3R + \varepsilon) B^n$ which is uniquely determined by its geometric moments up to order $m$ among all convex bodies contained in $(3R + \varepsilon) B^n$ and fulfils
\begin{equation}\label{Assump1}
\dH(K,K_m)\leq \varepsilon .
\end{equation}
Due to the proof of Theorem~\ref{UniqinC}, we can assume that $m\geq 2$ and $K_{-\varepsilon }\subset K_m\subset K_\varepsilon $.
Then, condition \eqref{RCond} ensures that $K_m$ is uniquely determined among all convex bodies. Namely, let $L$ be a convex body with 
\begin{equation}\label{Assump2}
\mu_{\alpha}(L)=\mu_{\alpha}(K_m),\quad \alpha\in\N_0^n, \text{ with }|\alpha|\leq m.
\end{equation}
Then it follows by Lemma \ref{ReprOfI2} and a simple calculation that
\begin{align*}
I_2(\tilde{L})&= I_2(\tilde{K}_m)\leq I_2(2 V_n(K_m)^{-1/n}R(K_m) B^n)
\\
&=\bigg(\frac{2^{n+2} \, \omega_n}{n+2} V_n(K_m)^{-(n+2)/n}R(K_m)^{n+2}\bigg)^{1/2},
\end{align*}
where we have used that $\tilde{K}_m \subset 2R(\tilde{K}_m)B^n$ as $s(\tilde{K}_m)=0$.
Thus, we obtain by \eqref{CircumEst} that
\begin{equation*}
 R(L)\leq   \left(\frac{e \Gamma(n+3)}{\Gamma(3)\Gamma(n)}
\frac{2^{n+3} \, \omega_n}{n+2} V_n(K_m)^{-1}R(K_m)^{n+2}\right)^{1/2} \leq c(K,\varepsilon ).
\end{equation*}
Assumption~\eqref{Assump2} implies that $s(L)=s(K_m)$, so
\begin{equation*}
\sup_{x \in L} \| x \| \leq \sup_{x \in L} \| x - s(L) \| + \| s(K_m) \| \leq 3R + \varepsilon
\end{equation*}
as $K_m \subset (R+\varepsilon)B^n$ by \eqref{Assump1}. Thus, $L \subset (3R+\varepsilon)B^n$, so $K_m=L$, and we obtain the assertion.
\end{proof}

\begin{rem}
Due to the one-to-one correspondence between the geometric moments up to order $m$ and the Legendre moments up to order $m$ of a convex body, the uniqueness results stated in this section
 hold if the geometric moments are replaced by Legendre moments in the two-dimensional case.
\end{rem}

%%%%%%%%%%%%%%%%%%%%%%%%%%%%%%%%%%%%%%%%%%%%%%%%%%%%%%%%%%%%%%%%%%%%%%%%%%%%%%%%

%%%%%%%%%%%%%%%%%%%%%%%%%%%%%%%%%%%%%%%%%%%%%%%%%%%%%%%%%%%%%%%%

\section{Stability results}\label{SecStab}

In this section, we derive stability results for two-dimensional convex bodies contained in the unit square. We derive an upper bound for the Nikodym distance of convex bodies where the first $(N+1)^2$ moments are close in the Euclidean distance. The stability results are based on more general results for twice continuously differentiable functions on the unit square.

\subsection{Stability results for functions on the unit square}
The study in this section uses ideas from \cite{Talenti.1987} (see also \cite{Ang1999}), which considers the problem of recovering a real-valued function $f$ defined on the interval $(0,1)$ from its first $N+1$ moments $\mu_0(f),\ldots,\mu_N(f)$. In \cite{Talenti.1987}, it is shown that if $f, g:(0,1)\to \R$ are absolutely continuous functions satisfying
\[
\sum\limits_{k=0}^N |\mu_k(f)-\mu_k(g)|^2\leq\varepsilon^2
\]
and
\[
\|f^\prime(x)-g^\prime(x)\|_2^2 \leq E^2
\]
for some $\varepsilon, E>0$, then
\[
\|f-g\|_2^2\leq \min\left\{\frac{\varepsilon^2}{E^2} e^{3.5(n+1)}+ \frac{1}{4}(n+1)^{-2}:n=0,\ldots,N\right\}.
\]
Using the same ideas as \cite{Talenti.1987}, we deduce the following corresponding theorem in two dimensions.
\begin{thm}\label{StabResFunc}
If $g,h\in C^2([0,1]^2)$ are twice continuously differentiable functions satisfying
\[
\sum\limits_{i,j=0}^N |\mu_{ij}(g)-\mu_{ij}(h)|^2\leq \varepsilon^2
\]
and
\[
\frac{1}{4} \left\|\frac{\partial}{\partial x_1}(g-h)\right\|_2^2
+\frac{1}{4} \left\|\frac{\partial}{\partial x_2}(g-h)\right\|_2^2\leq E^2
\]
for some $\varepsilon, E>0$, then
\[
\|g-h\|_2^2\leq \min \{a_0(n+1)^2 e^{7(n+1)}\varepsilon^2 + (n+1)^{-2}E^2: n=0,\ldots,N \}
\]
where $a_0 > 0$.
\end{thm}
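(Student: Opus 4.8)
\noindent The plan is to transfer the one-dimensional argument of \cite{Talenti.1987} to two dimensions via a tensor-product expansion. Write $f:=v-w\in C^2([0,1]^2)$; by linearity of the moments the hypotheses become $\sum_{i,j=0}^N\mu_{ij}(f)^2\le\varepsilon^2$ and $\tfrac14|\partial_{x_1}f|^2_{L^2([0,1]^2)}+\tfrac14|\partial_{x_2}f|^2_{L^2([0,1]^2)}\le E^2$, and we must bound $|f|^2_{L^2([0,1]^2)}$. Since the products $L_i(x_1)L_j(x_2)$ form an orthonormal basis of $L^2([0,1]^2)$, Parseval's identity gives
\[
|f|^2_{L^2([0,1]^2)}=\sum_{i,j\ge 0}c_{ij}^2,\qquad c_{ij}:=\int_{[0,1]^2}f(x_1,x_2)\,L_i(x_1)L_j(x_2)\,d(x_1,x_2).
\]
For each fixed $n\in\{0,\dots,N\}$ I would split this sum into a \emph{low-frequency} block over $\{(i,j): i\le n \text{ and } j\le n\}$ and a \emph{high-frequency} block over $\{(i,j): i>n \text{ or } j>n\}$, bound the former by $a_0(n+1)^2e^{7(n+1)}\varepsilon^2$ and the latter by $(n+1)^{-2}E^2$, and finally take the minimum over $n$.

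For the low-frequency block I would use the change of basis between monomials and Legendre polynomials. Writing $L_i(t)=\sum_{p=0}^i a^{(i)}_p t^p$, linearity gives $c_{ij}=\sum_{p=0}^i\sum_{q=0}^j a^{(i)}_p a^{(j)}_q\,\mu_{pq}(f)$, and Cauchy--Schwarz together with the moment hypothesis yields $|c_{ij}|\le\|a^{(i)}\|_2\,\|a^{(j)}\|_2\,\varepsilon$ whenever $i,j\le n$, hence
\[
\sum_{i,j=0}^n c_{ij}^2\le\Bigl(\sum_{i=0}^n\|a^{(i)}\|_2^2\Bigr)^{\!2}\varepsilon^2 .
\]
Equivalently, the left-hand side equals $m^\top(H_{n+1}\otimes H_{n+1})^{-1}m$ with $m=(\mu_{pq}(f))_{p,q\le n}$ and $H_{n+1}$ the $(n+1)\times(n+1)$ Hilbert matrix, so it is controlled by the reciprocal of the squared smallest eigenvalue of $H_{n+1}$. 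The exponential growth of $\sum_{i\le n}\|a^{(i)}\|_2^2$, visible already from the explicit formula $a^{(i)}_p=(-1)^{i-p}\sqrt{2i+1}\,\binom{i}{p}\binom{i+p}{p}$, is precisely what produces the factor $e^{7(n+1)}$, and a bound of the required form is obtained exactly as in the one-dimensional analysis of \cite{Talenti.1987}.

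For the high-frequency block I would reduce to one dimension by Fubini. Fix $x_2\in[0,1]$ and set $b_i(x_2):=\int_0^1 f(x_1,x_2)L_i(x_1)\,dx_1$; then $\sum_{j\ge 0}c_{ij}^2=\int_0^1 b_i(x_2)^2\,dx_2$, so $\sum_{i>n}\sum_{j\ge 0}c_{ij}^2=\int_0^1\bigl(\sum_{i>n}b_i(x_2)^2\bigr)dx_2$. The key one-dimensional input is the tail estimate: for $g\in C^1([0,1])$,
\[
\sum_{k=n+1}^{\infty}\Bigl(\int_0^1 g(t)\,L_k(t)\,dt\Bigr)^{\!2}\le\frac{1}{4(n+1)^2}\Bigl|\frac{dg}{dt}\Bigr|^2_{L^2(0,1)} .
\]
I would prove this by a single integration by parts: for $k\ge 1$ the antiderivative $M_k(t):=\int_0^t L_k(s)\,ds$ equals $\tfrac{1}{2\sqrt{2k+1}}\bigl(L_{k+1}/\sqrt{2k+3}-L_{k-1}/\sqrt{2k-1}\bigr)$, so it vanishes at $t=0$ and $t=1$ (no boundary terms) and is orthogonal to every $L_m$ with $m\notin\{k-1,k+1\}$; hence $\int_0^1 gL_k=-\int_0^1 g'M_k$ involves only the $(k{-}1)$-st and $(k{+}1)$-st Legendre coefficients of $g'$, and summing over $k>n$, with the two resulting sums reindexed and estimated separately rather than bounded by a single uniform weight, yields the constant $\tfrac14(n+1)^{-2}$. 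Applying this to $g=f(\cdot,x_2)$ for a.e.\ $x_2$ and integrating in $x_2$ gives $\sum_{i>n}\sum_{j\ge 0}c_{ij}^2\le\tfrac{1}{4(n+1)^2}|\partial_{x_1}f|^2_{L^2([0,1]^2)}$, and symmetrically $\sum_{j>n}\sum_{i\ge 0}c_{ij}^2\le\tfrac{1}{4(n+1)^2}|\partial_{x_2}f|^2_{L^2([0,1]^2)}$. Since $\{i>n \text{ or } j>n\}=\{i>n\}\cup\{j>n\}$, the high-frequency block is at most $\tfrac{1}{4(n+1)^2}\bigl(|\partial_{x_1}f|^2_{L^2([0,1]^2)}+|\partial_{x_2}f|^2_{L^2([0,1]^2)}\bigr)\le(n+1)^{-2}E^2$ by the second hypothesis (this is where the factors $\tfrac14$ in the hypothesis are used).

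Adding the two estimates gives $|f|^2_{L^2([0,1]^2)}\le a_0(n+1)^2e^{7(n+1)}\varepsilon^2+(n+1)^{-2}E^2$ for every $n=0,\dots,N$, and minimizing over $n$ completes the proof. The step I expect to be the main obstacle is the low-frequency estimate, where one must quantify the severe ill-conditioning of the monomial moments and extract the precise exponential constant; this is exactly where the one-dimensional analysis of \cite{Talenti.1987} is needed. A secondary, more technical point is the clean constant $\tfrac14(n+1)^{-2}$ in the one-dimensional tail estimate, which requires handling the contributions of $L_{k-1}$ and $L_{k+1}$ separately after reindexing. (For the whole argument $C^1$ regularity of $f$ would suffice, so the $C^2$ hypothesis is comfortably more than enough.)
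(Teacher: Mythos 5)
Your proof is correct, and it follows the paper's overall architecture — Parseval in the tensor Legendre basis, the block $\{i,j\le n\}$ bounded through the monomial-to-Legendre change of basis and the ill-conditioning of the Hilbert matrix (your entrywise Cauchy--Schwarz bound $\bigl(\sum_{i\le n}\|a^{(i)}\|_2^2\bigr)^2\varepsilon^2$ is exactly the paper's Frobenius-norm estimate $|C|_F^4\,|M|_F^2$, with $|C|_F^2=\tr(H_n^{-1})\lesssim (n+1)e^{3.5(n+1)}$ taken from Talenti/Taussky, precisely as the paper does) — but your treatment of the high-frequency tail is genuinely different. The paper uses the Sturm--Liouville equation $-\frac{d}{dx_1}[x_1(1-x_1)L_i'(x_1)]=i(i+1)L_i(x_1)$: multiplying by $u$, integrating by parts twice gives the exact weighted identity $\sum_{i,j}i(i+1)\lambda_{ij}(u)^2=\int_{[0,1]^2}x_1(1-x_1)\bigl(\tfrac{d}{dx_1}u\bigr)^2\le\tfrac14\bigl|\tfrac{d}{dx_1}u\bigr|^2$, and then $i(i+1)\ge (N+1)^2$ on the tail. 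You instead prove a one-dimensional tail lemma via a single integration by parts against the antiderivative $M_k=\tfrac{1}{2\sqrt{2k+1}}\bigl(L_{k+1}/\sqrt{2k+3}-L_{k-1}/\sqrt{2k-1}\bigr)$ and transfer it to two dimensions by Fubini; I checked the reindexing, and the resulting weight on each coefficient $b_m^2$ (including the boundary case $m=n$, where it is $\tfrac{1}{2(2n+1)(2n+3)}$) indeed stays below $\tfrac14(n+1)^{-2}$, so the constants match the paper's. What each route buys: your integration-by-parts argument needs only first derivatives (consistent with your closing remark that $C^1$ suffices), whereas the paper's identity requires the second derivative but delivers the weighted sum as an exact equality in one clean step, which is also how it reuses the same inequality later for the mollified indicator functions. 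Both proofs lean equally on the cited asymptotics for the Hilbert-matrix condition number for the factor $e^{7(n+1)}$, so your deferral to Talenti there is no weaker than the paper's.
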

\begin{proof}
Let $f_N$ be the orthogonal projection of $f:=g-h$ on the linear hull $\lin \{x_1^{i}x_2^{j}: i,j=0,\ldots,N\}$ with respect to the usual scalar product on $L^2([0,1]^2)$. Furthermore, let
\[t_N:=f-f_N\]
be the projection of $f$ on the orthogonal complement of $\lin \{x_1^{i}x_2^{j}: i,j=0,\ldots,N\}$. Then
\[f_N(x_1,x_2)= \sum\limits_{i,j=0}^N \lambda_{ij}(f) L_i(x_1)L_j(x_2)\]
and
\[
t_N(x_1,x_2)= \sum\limits_{\substack{i,j=0\\ i\vee j>N}}^\infty\lambda_{ij}(f) L_i(x_1)L_j(x_2),
\]
where $\lambda_{ij}(f), i,j \in \N_0$ are the Legendre moments of $f$. For $i\in\N_0$, the coefficients of the polynomial $L_i$ are denoted by $C_{ij}$, $ j=0,\ldots,i,$ that is
\[
L_i(x)=\sum\limits_{j=0}^{i} C_{ij}x^j, \quad x\in [0,1].
\]
Then it follows for $i,j=0,\ldots,N$ that
\begin{align}\label{FormulaForLegendreMoments}
 \lambda_{ij}(f) %&=  \int\limits_{[0,1]^2} u(x_1,x_2)L_i(x_1)L_j(x_2)d(x_1,x_2)
 & = \sum\limits_{k=0}^{i}\sum\limits_{l=0}^{j} C_{ik}C_{jl}\int\limits_{[0,1]^2}f(x_1,x_2)x_1^k x_2^l d(x_1,x_2) \nn \\
 & = \sum\limits_{k=0}^{i}\sum\limits_{l=0}^{j} C_{ik} C_{jl} \mu_{kl}(f) \nn \\
 & = (C M C^\top)_{ij},
\end{align}
with
\[
C := \left(
      \begin{array}{cccc}
        C_{00} &  & &  \\
        C_{10} & C_{11} &  &  \\
        \vdots &  & \ddots &  \\
        C_{N0} & C_{N1} & \ldots & C_{NN} \\
      \end{array}
    \right) \quad \text{ and } \quad M:= (\mu_{ij}(f))_{i,j=0,\ldots,N}.
\]

The Frobenius norm of a square matrix A is defined as $|A|_F:=\sqrt{\tr(A^\top A)}$, and since this norm is submultiplicative, see \cite[(3.3.4)]{Johnson.1990}, we obtain that
\begin{align}
\|f_N\|_2&=\sqrt{\sum\limits_{i,j=0}^N \lambda_{ij}(f)^2} = \sqrt{\tr(L^\top L)}\nn\\
& = |L|_F = |C M C^\top|_F\leq |C|_F |M|_F |C^\top|_F\nn\\
& = |C|_F^2 |M|_F\label{eqhNAbschaetzung}
\end{align}
where $L:=(\lambda_{ij}(f))_{i,j=0,\ldots,N}$. The matrix $C^{\top}C$ has $N+1$ non-negative eigenvalues, $0\leq l_0\leq l_1\leq \ldots \leq l_N$, and $C^\top C= H_N^{-1}$, where $H_N$ is the Hilbert matrix
\[H_N:= \left(\frac{1}{i+j+1}\right)_{i,j=0,\ldots,N},\]
see \cite[(22)]{Talenti.1987}. Since $\|H_N e_1\| > 1$, the Hilbert matrix $H_N$ has an eigenvalue larger than $1$, so the smallest eigenvalue $l_0$ of $H_N^{-1}$ is smaller than $1$. This implies that
\begin{equation}
|C|_F^2= \tr(C^\top C)= \sum\limits_{i=0}^N l_i \leq (N+1)l_N \leq (N+1) \frac{l_N}{l_0} \approx a_0(N+1)e^{3.5(N+1)}\label{estimateFrobeniusNormC}
\end{equation}
with a constant $a_0 >0$, where we have used the approximation \cite[(8)]{Talenti.1987}, see also \cite[p. 111]{Taussky.1954}. From equation \eqref{eqhNAbschaetzung} and \eqref{estimateFrobeniusNormC}, we obtain that
\begin{equation}
\|f_N\|_2\leq a_0(N+1)e^{3.5(N+1)} \sqrt{\sum\limits_{i,j=0}^N \mu_{ij}(f)^2}.\label{eqEstimateHN}
\end{equation}

The shifted Legendre polynomials satisfy the differential equation
\begin{equation*}
- \frac{\partial}{\partial x_1}[x_1(1-x_1)L_i^\prime (x_1)]=i(i+1)L_i(x_1),\quad x_1\in[0,1], i\in\N_0,%\label{partDifOneDimLegendre}
\end{equation*}
see \cite[(25)]{Talenti.1987}.
From this differential equation, we obtain by multiplication with $f(x_1,x_2)$, integration over $[0,1]$ with respect to $x_1$ and twofold integration by parts that
\begin{multline}
-\int\limits_{[0,1]}L_i(x_1)\frac{\partial}{\partial x_1}\left[ x_1(1-x_1)\frac{\partial}{\partial x_1}f(x_1,x_2)\right] dx_1
=\\ i(i+1)\int\limits_{[0,1]}L_i(x_1)f(x_1,x_2)dx_1.\label{EqFromDifLeg1}
\end{multline}
By multiplication with $L_j(x_2)$ and integration with respect to $x_2$, it follows from $\eqref{EqFromDifLeg1}$ that
\begin{multline*}
-\int\limits_{[0,1]^2}L_i(x_1)L_j(x_2)\frac{\partial}{\partial x_1}\left[ x_1(1-x_1)\frac{\partial}{\partial x_1}f(x_1,x_2)\right] dx_1 dx_2
=\\
 i(i+1)\int\limits_{[0,1]^2}L_i(x_1)L_j(x_2)f(x_1,x_2)dx_1 dx_2.
\end{multline*}
This implies that the Legendre moments of the function
\[
\tilde{f}: (x_1,x_2)\mapsto \frac{\partial}{\partial x_1}\left[ x_1(1-x_1)\frac{\partial}{\partial x_1}f(x_1,x_2)\right]
\]
are equal to $-i(i+1)\lambda_{ij}(f),i,j\in\N_0$. Thus, we obtain from the theory of Hilbert spaces that
\[
-\sum\limits_{i,j=0}^\infty i(i+1)\lambda_{ij}(f)^2 =\sum\limits_{i,j=0}^\infty\lambda_{ij}(\tilde{f})\lambda_{ij}(f) =\sum\limits_{i,j=0}^\infty \langle \tilde{f},L_i L_j\rangle \langle L_i L_j,f\rangle=\langle \tilde{f},f\rangle.
\]
This and integration by parts yield that
\begin{align}
\sum\limits_{i,j=0}^\infty i(i+1)\lambda_{ij}(f)^2  &= \int\limits_{[0,1]^2} \frac{\partial}{\partial x_1}\left[ x_1(1-x_1)\frac{\partial}{\partial x_1}f(x_1,x_2)\right](-f(x_1,x_2)) d(x_1,x_2)\nn\\
& =  \int\limits_{[0,1]^2} x_1(1-x_1)\left(\frac{\partial}{\partial x_1}f(x_1,x_2)\right)^2 d(x_1,x_2)\nn\\
& \leq \frac{1}{4} \left\|\frac{\partial}{\partial x_1}f(x_1,x_2)\right\|_2^2.\label{eqEst2}
\end{align}
In the same way, we conclude that
\begin{equation}
\sum\limits_{i,j=0}^\infty j(j+1) \lambda_{ij}(f)^2\leq \frac{1}{4} \left\|\frac{\partial}{\partial x_2}f(x_1,x_2)\right\|_2^2.\label{eqEst3}
\end{equation}
The inequalities~\eqref{eqEst2} and \eqref{eqEst3} imply that
\begin{align}
\|t_N\|_2^2&= \sum\limits_{\substack{i,j=0\\ i\vee j>N}}^\infty \lambda_{ij}(f)^2
 \leq\sum\limits_{i=N+1}^\infty\sum\limits_{j=0}^\infty  \lambda_{ij}(f)^2 +\sum\limits_{i=0}^\infty \sum\limits_{j=N+1}^\infty \lambda_{ij}(f)^2 \nn\\
& \leq \sum\limits_{i,j=0}^\infty \frac{i(i+1)}{(N+1)^2}\lambda_{ij}(f)^2+
\sum\limits_{i,j=0}^\infty \frac{j(j+1)}{(N+1)^2}\lambda_{ij}(f)^2\nn\\
&\leq \frac{1}{(N+1)^2} \bigg(\frac{1}{4} \left\|\frac{\partial}{\partial x_1}f(x_1,x_2)\right\|_2^2 +\frac{1}{4} \left\|\frac{\partial}{\partial x_2}f(x_1,x_2)\right\|_2^2\bigg),\label{eqEstTN}
\end{align}
and as a consequence we obtain that
\[
\|g-h\|_2^2 = \|f_N\|_2^2+\|t_N\|_2^2\leq a_0(N+1)^2 e^{7(N+1)}\varepsilon^2 + \frac{1}{(N+1)^2}E^2.
\]
\end{proof}

\subsection{Application to Convex Bodies}
In this section, we approximate the indicator function $\mathbf{1}_K$ of a convex body $K$ by a smooth function and apply the result from the previous section. In this way, we obtain an estimate for the Nikodym distance of two convex bodies in terms of the Euclidean distance of their first $(N+1)^2$ geometric moments.

\begin{thm}\label{ThmEstimateFiniteMoments}
If $K,L\subset [0,1]^2$ are convex bodies satisfying
\[
\sum\limits_{i,j=0}^N |\mu_{ij}(K)-\mu_{ij}(L)|^2\leq \varepsilon^2,
\]
for some $\varepsilon \geq 0$, then
\[
\delta_N(K,L)\leq
 \min \left\{a_0(n+1)^2 e^{7(n+1)}\varepsilon^2 + \frac{a_1}{(n+1)} : n=0,\ldots,N \right\},
\]
with constants $a_0,a_1 > 0$.
\end{thm}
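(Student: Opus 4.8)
The plan is to reduce the statement to Theorem~\ref{StabResFunc} by replacing the indicator functions $\mathbf{1}_K$ and $\mathbf{1}_L$ with smooth approximants $v,w \in C^2([0,1]^2)$ to which the function-theoretic stability result applies. First I would fix a small parameter $\eta > 0$ and, for each convex body, construct a mollified indicator function: roughly, $v := \mathbf{1}_{K} * \varphi_\eta$ restricted to $[0,1]^2$, where $\varphi_\eta$ is a standard smooth mollifier supported in a ball of radius $\eta$ (one must be slightly careful near $\partial [0,1]^2$, which can be handled by working on a slightly larger square and using that $K, L$ are fixed compact subsets of the open unit square, or by a one-sided mollification). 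The key quantitative features I need are: (i) $v = \mathbf{1}_K$ outside the $\eta$-neighbourhood of $\partial K$, so $|v - \mathbf{1}_K|^2_{L^2} \le V_2((\partial K)_\eta)$, which is bounded by $c_K \eta$ for a convex body because $\partial K$ has finite perimeter; (ii) the geometric moments satisfy $|\mu_{ij}(v) - \mu_{ij}(K)| \le |v-\mathbf{1}_K|_{L^1} \le c_K \eta$ since the monomials are bounded by $1$ on $[0,1]^2$; and (iii) the gradient bound $\frac14|\tfrac{d}{dx_1}(v-w)|^2_{L^2} + \frac14|\tfrac{d}{dx_2}(v-w)|^2_{L^2} \le E(\eta)^2$. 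For (iii), the gradient of a mollified indicator of a convex body concentrates on $(\partial K)_\eta$ with magnitude $O(1/\eta)$, so each term is of order $(1/\eta)^2 \cdot V_2((\partial K)_\eta) = O(1/\eta)$; thus $E(\eta)^2 \le c/\eta$ for a constant depending only on the (uniformly bounded) perimeters of convex bodies in $[0,1]^2$.

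Next I would apply Theorem~\ref{StabResFunc} to $v, w$. Using the triangle inequality in $\ell^2$, the hypothesis $\sum_{i,j=0}^N |\mu_{ij}(K) - \mu_{ij}(L)|^2 \le \varepsilon^2$ together with (ii) gives $\sum_{i,j=0}^N |\mu_{ij}(v) - \mu_{ij}(w)|^2 \le (\varepsilon + c\eta)^2 =: \tilde\varepsilon^2$. Combined with the gradient bound $E(\eta)^2 \le c/\eta$, Theorem~\ref{StabResFunc} yields, for every $n = 0,\dots,N$,
\[
|v - w|^2_{L^2([0,1]^2)} \le a_0 (n+1)^2 e^{7(n+1)} (\varepsilon + c\eta)^2 + \frac{c}{\eta (n+1)^2}.
\]
Then $\delta_N(K,L) = |\mathbf{1}_K - \mathbf{1}_L|^2_{L^2} \le \big(|v-w|_{L^2} + |v - \mathbf{1}_K|_{L^2} + |w - \mathbf{1}_L|_{L^2}\big)^2 \le 3|v-w|^2_{L^2} + 3 c_K \eta + 3 c_L \eta$ by the elementary inequality $(a+b+c)^2 \le 3(a^2+b^2+c^2)$ and (i).

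The last step is to optimize over the free parameter $\eta$. The two $\eta$-dependent error contributions are of the form $c \eta \cdot (n+1)^2 e^{7(n+1)}$ (from the perturbed moments) plus $c/(\eta(n+1)^2)$ (from the gradient/tail term) plus the $O(\eta)$ Nikodym approximation error; balancing the $\eta$ and $1/\eta$ terms suggests choosing $\eta$ proportional to $(n+1)^{-2} e^{-7(n+1)/2}$ or similar. A short computation then shows the resulting bound has the shape $a_0(n+1)^2 e^{7(n+1)}\varepsilon^2 + a_1/(n+1)$ for suitable constants $a_0, a_1$, uniformly in $n$, after which taking the minimum over $n = 0,\dots,N$ gives the claim. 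The main obstacle I anticipate is item (iii): getting an honest, convex-body-independent bound on the $L^2$-norm of the gradient of the mollified indicator, which requires the surface-area/perimeter estimate $V_2((\partial K)_\eta) \le c\,\eta$ with $c$ depending only on a uniform bound for the perimeter of convex bodies in $[0,1]^2$ (such a uniform bound exists since the perimeter is monotone under inclusion, so it is at most the perimeter of $[0,1]^2$). Care is also needed so that the mollification does not leave $[0,1]^2$; this is where one uses that the stability statement is really about $K, L$ sitting inside the square, and either extends functions by the Whitney-type argument or performs the convolution after an inward shift, absorbing the extra $O(\eta)$ error into $a_1/(n+1)$.
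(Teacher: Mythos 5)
There is a genuine gap at the balancing step, and it cannot be repaired within your setup. Because you mollify \emph{before} applying Theorem~\ref{StabResFunc}, the mollification error contaminates the moment term, which is the exponentially amplified one: your bound reads $a_0(n+1)^2e^{7(n+1)}(\varepsilon + c\eta)^2 + c/(\eta(n+1)^2)$ (in fact the perturbation should be of order $c\eta(n+1)$ rather than $c\eta$, since $(n+1)^2$ moments are each perturbed by order $\eta$, but that is secondary). To keep the $\eta$-contribution of the first term of size $O(1/(n+1))$ you are forced to take $\eta \lesssim (n+1)^{-3/2}e^{-3.5(n+1)}$, while to keep the gradient term $c/(\eta(n+1)^2)$ of size $O(1/(n+1))$ you need $\eta \gtrsim 1/(n+1)$; these requirements are incompatible for large $n$. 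With your proposed choice $\eta \sim (n+1)^{-2}e^{-3.5(n+1)}$ the tail term becomes of order $e^{3.5(n+1)}$, which is exponentially large and, since it is not multiplied by $\varepsilon^2$, cannot be absorbed into either term of the claimed bound (take $\varepsilon = 0$ to see this). Even optimizing the sum $A\eta^2 + B/\eta$ exactly leaves an additive term of order $e^{7(n+1)/3}$ up to polynomial factors, so no choice of $\eta(n)$ yields the asserted form $a_0(n+1)^2e^{7(n+1)}\varepsilon^2 + a_1/(n+1)$.

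The paper avoids this by splitting $u = \mathbf{1}_K - \mathbf{1}_L$ into its projection $h_N$ onto $\lin\{x_1^{i}x_2^{j} : i,j=0,\ldots,N\}$ and the orthogonal tail $t_N$, and by observing that the exponentially amplified estimate \eqref{eqEstimateHN} for $|h_N|_{L^2}$ is purely algebraic: it only relates the Legendre coefficients of $u$ to the geometric moments of $u$ through the matrix $C$, hence it applies to the non-smooth $u$ directly with the \emph{exact} moments, so no mollification error is ever multiplied by $e^{7(n+1)}$. Mollification (of $u$, with parameter $\rho$) is used only for the tail: since orthogonal projection is an $L^2$-contraction, $|t_N|_{L^2} \leq |u-u^{(\rho)}|_{L^2} + |t_N^{(\rho)}|_{L^2} \lesssim \sqrt{\rho} + \rho^{-1/2}/(N+1)$, and the choice $\rho=(N+1)^{-1}$ gives $|t_N|^2_{L^2} \lesssim 1/(N+1)$ with no exponential factor. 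Your mollification and gradient estimates (items (i)--(iii)) are essentially the ones the paper uses, so if you restructure the argument this way — feed the exact indicator moments into the low-frequency estimate and use the smoothing only for the high-frequency part — your proof goes through.
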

\begin{proof}
Let $f:=\mathbf{1}_K-\mathbf{1}_L$. As in the proof of Theorem~\ref{StabResFunc}, we let $f_N$ denote the orthogonal projection of $f$ on $\lin\{x_1^{i}x_2^{j}:i,j=0,\ldots,N\}$ and let $t_N$ denote the projection on the orthogonal complement of $\lin\{x_1^{i}x_2^{j}:i,j=0,\ldots,N\}$. In the proof of Theorem~\ref{StabResFunc}, the smoothness of $u$ is not used when the estimate \eqref{eqEstimateHN} is derived. Therefore, we obtain in the same way that
\begin{equation}
\|f_N\|_2\leq a_0(N+1)e^{3.5(N+1)} \sqrt{\sum\limits_{i,j=0}^N \mu_{ij}(f)^2}.\label{eqEstimateHN2}
\end{equation}

Using a mollification, see \cite[p. 110]{Miklavcic.2001}, we obtain for every $\rho>0$ a differentiable function $f^{(\rho)}:[0,1]^2 \rightarrow \mathbb{R}$ approximating $f$ in the $L^1$-norm. More precisely, we choose
\[
f^{(\rho)}(x) := (J_\rho \ast f)(x)=\int\limits_{[0,1]^2}J_\rho(x-y)f(y)dy
,\quad x\in [0,1]^2,
\]
where
\[
J_\rho= \begin{cases}\begin{array}{rl} c_0 \rho^{-2} e^{-\frac{\rho^2}{\rho^2- \|x\|^2}},& \text{ for }\|x\|<\rho\\ 0,& \text{ for } \|x\|\geq\rho\end{array}\end{cases}
\]
with a constant $c_0>0$ chosen such that $|J_\rho|_{L^1(\mathbb{R}^2)} =1.$ Notice that $c_0$ is independent of $\rho$ and that $J_\rho\in C^\infty(\R^2)$.
From the definition of the mollification, we obtain that
\[
\|f-f^{(\rho)}\|_\infty \leq \|f\|_{\infty }+ |J_\rho|_{L^1(\R^2)} \|f\|_\infty
\leq 2
\]
and
\[
(f-f^{(\rho)})(x)=0,\quad x\in A:=\left[K_{-\rho}\cup([0,1]^2\setminus K_{\rho})\right]\cap \left[L_{-\rho}\cup([0,1]^2\setminus L_{\rho})\right]
\]
since for $x\in A$ the function $f=\mathbf{1}_K-\mathbf{1}_L$ is constant on $x+\rho B^2$ which implies $f^{(\rho)}(x)=f(x)$.
Thus
\begin{align*}
\|f-f^{(\rho)}\|_2^2
&\leq \|f-f^{(\rho)}\|_\infty^2\; V_2(\left[(K_{\rho}\setminus K)\cup(K\setminus K_{-\rho})\right]\cup
\left[(L_{\rho}\setminus L)\cup(L\setminus L_{-\rho})\right] )
 \\
& \leq 4\left[V_2(K_{\rho} \setminus K)+ V_2(K\setminus K_{-\rho})+
V_2(L_{\rho}\setminus L)+ V_2(L\setminus L_{-\rho})
\right]
\end{align*}
for $\rho \in (0,1)$. Then, the fact that
\[ V_2(K\setminus K_{-\rho}) \leq V_2(K_{\rho}\setminus K),\] the Steiner formula \eqref{eq:Steiner}, and the monotonicity of the intrinsic volumes imply that
\begin{align*}
\|f-f^{(\rho)}\|_2^2
&\leq 8 \left[V_2(K_{\rho}\setminus K)+V_2(L_{\rho}\setminus L)\right]\\
& \leq 8 \left[ 2\rho^2 \pi+2\rho (V_1(K)+V_1(L)) \right]\\
&\leq  (16\pi+64)\rho \leq 11^2\rho ,
\end{align*}
where $V_1$ is the intrinsic volume of order $1$, so $V_1(M)$ is half the boundary length of a convex body $M$.
For $\rho \in (0,1)$, let $t_N^{(\rho)}$ be the orthogonal projection of $f^{(\rho)}$ on the orthogonal complement of $\lin\{x_1^{i} x_2^{j}: i,j=0,\ldots,N\}.$
Then it follows from Pythagoras' theorem and \eqref{eqEstTN} that
\begin{align*}
\|t_N\|_2 &\leq
\|t_N-t_N^{(\rho)}\|_2+\|t_N^{(\rho)}\|_2
\\
&\leq
\|f-f^{(\rho)}\|_2 +\frac{1}{N+1}E_{\rho}
\\
&\leq 11\sqrt{\rho}+\frac{1}{N+1}E_{\rho}
,
\end{align*}
where $E_{\rho}>0$ is some constant satisfying
\begin{equation}\label{eqEdelta}
\frac{1}{4} \left\|\frac{\partial}{\partial x_2}f^{(\rho)}\right\|_2^2
+\frac{1}{4} \left\|\frac{\partial}{\partial x_1}f^{(\rho)}\right\|_2^2\leq E_{\rho}^2.
\end{equation}
In order to obtain an expression for a constant $E_{\rho}$ that satisfies \eqref{eqEdelta}, we first observe that
\[
\frac{\partial}{\partial x_2}f^{(\rho)}(y)=\frac{\partial}{\partial x_1}f^{(\rho)}(y)=0, \quad y\in \left[K_{-\rho}\cup ((0,1)^2\setminus K_\rho)\right]\cap\left[L_{-\rho}\cup ((0,1)^2\setminus L_\rho)\right].
\]
Furthermore,
\begin{align*}
\frac{\partial}{\partial x_1}f^{(\rho)}(x)& = \left(\left[\frac{\partial}{\partial x_1}J_\rho\right]\ast f\right)(x) = \int\limits_{[0,1]^2}\left[\frac{\partial}{\partial x_1}J_\rho\right](x-y)f(y)dy\\
&\leq   \int\limits_{\R^2}\left|\frac{\partial}{\partial x_1}J_\rho(y)\right|dy
 = \rho^{-3} \int\limits_{\R^2}\left|\left[\frac{\partial}{\partial x_1}J_1\right]\left(\frac{y}{\rho}\right)\right|dy\\
& = \rho^{-1}\int\limits_{\R^2}\left|\left[\frac{\partial}{\partial x_1}J_1\right](y)\right|dy
\leq c_1 \rho^{-1}
\end{align*}
for $x \in \R^2$ and a constant $c_1 > 0$ independent of $\rho$. It follows that
\begin{align*}
\left\|\frac{\partial}{\partial x_1}f^{(\rho)}\right\|_2^2&\leq  c_1^2 \rho^{-2} \Big[V_2(K_{\rho}\setminus K)+ V_2(K\setminus K_{-\rho})+V_2(L_{\rho}\setminus L)+ V_2(L\setminus L_{-\rho})\Big]\\
&\leq \frac{c_1^2 11^2}{4\rho}.
\end{align*}
In the same way, we obtain
\[
\left\|\frac{\partial}{\partial x_2}f^{(\rho)}\right\|_2^2\leq  \frac{c_2^2   11^2}{4\rho}
\]
for a suitable $c_2>0$ independent of $\rho$.
Therefore, we can choose
\[
E_{\rho}^2:=c_3 \rho^{-1}
\]
for $\rho\in(0,1)$ and some constant $c_3>0$ independent of $\rho$. Letting $\rho=(N+1)^{-1}$, we obtain that
\begin{align*}
\|t_N\|_2^2 &\leq \left(11\sqrt{\rho}+\frac{1}{N+1} \sqrt{c_3} \rho ^{-1/2}\right)^2\\
&= 11^2 \rho + 22\sqrt{c_3} \frac{1}{N+1} + \frac{c_3}{(N+1)^2 \rho}\\
&= (11^2 + 22\sqrt{c_3}+ c_3) \frac{1}{N+1},
\end{align*}
which leads to the assertion.
\end{proof}

The matrix $C$ defined in the proof of Theorem~\ref{StabResFunc} is ill-conditioned and introduces an exponential factor in the upper bound for the Nikodym distance derived in Theorem~\ref{ThmEstimateFiniteMoments}. If the geometric moments are replaced by Legendre moments, the use of the matrix $C$ is avoided and the upper bound can be improved. 

\begin{thm}\label{ThmEstimateLegendre}
If $K, L \subset [0,1]^2$ are convex bodies satisfying
\begin{equation}\label{assumpLegendre}
\sum_{i,j=0}^N \lvert \lambda_{ij}(K)-\lambda_{ij}(L) \rvert^2 \leq \varepsilon^2 
\end{equation}
for some $\varepsilon \geq 0$, then
\begin{equation}\label{UpperBoundN}
\delta_N(K,L) \leq \varepsilon^2 + \frac{a_1}{N+1}
\end{equation}
with a constant $a_1 > 0$.
\end{thm}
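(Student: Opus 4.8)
The plan is to follow the proof of Theorem~\ref{ThmEstimateFiniteMoments} almost word for word, but to exploit that the products $(x_1,x_2)\mapsto L_i(x_1)L_j(x_2)$, $i,j\in\N_0$, already form an orthonormal basis of $L^2([0,1]^2)$, so that the ill-conditioned matrix $C$ never enters. Put $u:=\mathbf{1}_K-\mathbf{1}_L$, let $h_N$ be the orthogonal projection of $u$ onto $\lin\{L_i(x_1)L_j(x_2):i,j=0,\dots,N\}$, and let $t_N:=u-h_N$ be the projection onto the orthogonal complement. Since $\{L_i(x_1)L_j(x_2)\}_{i,j\in\N_0}$ is an orthonormal basis, Pythagoras' theorem gives $\delta_N(K,L)=|u|^2_{L^2([0,1]^2)}=|h_N|^2_{L^2([0,1]^2)}+|t_N|^2_{L^2([0,1]^2)}$, and it remains to bound the two summands.

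For $h_N$, orthonormality of the basis together with $\lambda_{ij}(u)=\lambda_{ij}(\mathbf 1_K)-\lambda_{ij}(\mathbf 1_L)=\lambda_{ij}(K)-\lambda_{ij}(L)$ gives $|h_N|^2_{L^2([0,1]^2)}=\sum_{i,j=0}^N\lambda_{ij}(u)^2=\sum_{i,j=0}^N|\lambda_{ij}(K)-\lambda_{ij}(L)|^2\le\varepsilon^2$ directly from hypothesis \eqref{assumpLegendre}. This is precisely the gain from switching from geometric to Legendre moments: the factor $|C|_F^2\approx a_0(N+1)e^{3.5(N+1)}$ that entered \eqref{eqEstimateHN2} has disappeared, and the coefficient in front of $\varepsilon^2$ is now exactly $1$.

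For $t_N$, I would use that $\lin\{L_i(x_1)L_j(x_2):i,j=0,\dots,N\}=\lin\{x_1^ix_2^j:i,j=0,\dots,N\}$ (the $L_i$ being a triangular polynomial basis with nonzero leading coefficients), so that $t_N$ coincides with the function denoted $t_N$ in the proof of Theorem~\ref{ThmEstimateFiniteMoments}. The estimate for $|t_N|$ obtained there does not use the moment hypothesis at all: it comes from mollifying $u$ to a function $u^{(\rho)}\in C^\infty([0,1]^2)$ with $|u-u^{(\rho)}|^2_{L^2([0,1]^2)}\le 11^2\rho$ and $\tfrac14|\tfrac{d}{dx_1}u^{(\rho)}|^2_{L^2([0,1]^2)}+\tfrac14|\tfrac{d}{dx_2}u^{(\rho)}|^2_{L^2([0,1]^2)}\le c_3\rho^{-1}$, applying inequality \eqref{eqEstTN} to $u^{(\rho)}$, and choosing $\rho=(N+1)^{-1}$; this yields $|t_N|^2_{L^2([0,1]^2)}\le a_1/(N+1)$ with an absolute constant $a_1>0$. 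Adding the two bounds gives \eqref{UpperBoundN}. Thus the proof is really a re-bookkeeping of earlier results, and the only point requiring care is to notice that the high-frequency estimate of Theorem~\ref{ThmEstimateFiniteMoments} transfers verbatim—because the two spanning sets coincide and that estimate is moment-free—while the low-frequency estimate becomes trivial by orthonormality; if a self-contained argument is wanted, one simply repeats the mollification paragraph from the proof of Theorem~\ref{ThmEstimateFiniteMoments}.
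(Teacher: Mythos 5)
Your proposal is correct and coincides with the paper's intended argument: the paper's proof is exactly the remark that one repeats the proof of Theorem~\ref{ThmEstimateFiniteMoments}, replacing the estimate \eqref{eqEstimateHN2} for $|h_N|_{L^2([0,1]^2)}$ by the bound $\varepsilon$ coming directly from orthonormality of the Legendre products, while the mollification bound $|t_N|^2_{L^2([0,1]^2)}\leq a_1/(N+1)$ carries over unchanged since $\lin\{L_i(x_1)L_j(x_2)\}=\lin\{x_1^ix_2^j\}$. Your write-up fills in precisely these steps, so nothing is missing.
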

The proof of Theorem~\ref{ThmEstimateLegendre} follows the lines of the proof of Theorem~\ref{ThmEstimateFiniteMoments}. Due to inequality~\eqref{assumpLegendre}, the upper bound on the $L^2$-norm of $f_N$ in \eqref{eqEstimateHN2} can be replaced by $\varepsilon$. This yields the upper bound \eqref{UpperBoundN} of the Nikodym distance.

\begin{rem}
If the first $(N+1)^2$ geometric moments of two convex bodies $K, L \subset [0,1]^2$ are identical, then the first $(N+1)^2$ Legendre moments of $K$ and $L$ are identical. In this case, Theorem~\ref{ThmEstimateFiniteMoments} (or Theorem~\ref{ThmEstimateLegendre}) implies that $\delta_N(K,L) \leq \frac{a_1}{N+1}$.
\end{rem}

\begin{rem}\label{NikodymExt}
The Nikodym distance $\delta_N$ is extended in the natural way to the set of compact, convex subsets of the unit square. It then defines a pseudometric, which we also denote by $\delta_N$. As the proofs of Theorems~\ref{ThmEstimateFiniteMoments} and \ref{ThmEstimateLegendre} do not use that the interior of the convex bodies are nonempty, the stability results hold for compact, convex subsets of the unit square and the pseudometric $\delta_N$. In the following sections, we repeatedly consider the distance $\delta_N(K, P_k)$ for a convex body $K \subset [0,1]^2$ and a sequence of polygons $(P_k)_{k \in \N}$ contained in $[0,1]^2$, see Theorems \ref{ThmLSQdistbounds}, \ref{Stability2} and \ref{ThmNoise}. If $\delta_N(K, P_k) \to 0$ for $k \to \infty$, then $\intr P_k \neq \emptyset$ for $k$ sufficiently large. This implies that $\delta_N$ in the expression $\delta_N(K, P_k)$ is a proper metric for sufficiently large $k$ .    
\end{rem}

\section{Least Squares Estimators based on Moments}\label{Sec_LSQ}
Let $K\subset [0,1]^2$ be a convex body and assume that its geometric moments $\mu_{ij}(K)$ for $i,j\in \N_0$ are given. For $m\geq 3$, let $\cP^{(m)}$ denote the set of convex polygons contained in $[0,1]^2$ with at most $m$ vertices. Any polygon $\hat{P}_m \in \cP^{(m)}$ satisfying
\[
\hat{P}_m = \argmin \left\{ \sum\limits_{i,j=0}^N (\mu_{ij}(K)-\mu_{ij}(P))^2:\quad P\in\cP^{(m)} \right\} 
\]
is called a least squares estimator of $K$ with respect to the first $(N+1)^2$ geometric moments on the space $\cP^{(m)}$, where $N \in \N_0$. Likewise, we define a least squares estimator based on the Legendre moments. Assume that the Legendre moments $\lambda_{ij}(K), i,j \in \N_0$ of $K$ are given. Then, any polygon $\eL \in \cP^{(m)}$ satisfying 
\[
\eL= \argmin \left\{ \sum\limits_{i,j=0}^N (\lambda_{ij}(K)-\lambda_{ij}(P))^2:\quad P\in\cP^{(m)} \right\} 
\]
is called a least squares estimator of $K$ with respect to the first $(N+1)^2$ Legendre moments on the space $\cP^{(m)}$. Since the polygons in $\cP^{(m)}$ are uniformly bounded, Blaschke's selection theorem ensures the existence of least squares estimators $\hat{P}_m$ and $\eL$.

\begin{thm}\label{ThmLSQdistbounds}
Let $\hat{P}_m$ and $\eL$ be least squares estimators of $K$ on the space $\cP^{(m)}$ with respect to the first $(N+1)^2$ geometric moments and the first $(N+1)^2$ Legendre moments. Then
\[
\delta_N (\hat{P}_m,K)\leq
\left(a_0(n+1)^2 e^{7(n+1)}\left(1+\frac{1}{2}\ln(2n+1)\right)^2 \frac{8\pi^3 + 16\pi}{m^2}+ \frac{a_1}{(n+1)}\right)
\]
for $n=0, \dots, N$ and
\[
\delta_N (\eL,K)\leq
\frac{8\pi^3 + 16\pi}{m^2}+ \frac{a_1}{(N+1)}.
\]
\end{thm}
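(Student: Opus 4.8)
The plan is to treat both least squares estimators by the same three-step scheme: first exhibit a polygon $P_m \in \cP^{(m)}$ that is close to $K$ in the Nikodym metric; then use the defining (least squares) property of the estimator to turn this into a bound on the Euclidean distance between the relevant moments of $K$ and those of the estimator; and finally feed the result into the stability estimates of Theorems~\ref{ThmEstimateFiniteMoments} and~\ref{ThmEstimateLegendre}.

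\emph{Polygonal approximation.} I would first establish that for every convex body $K \subset [0,1]^2$ and every $m \geq 3$ there is a polygon $P_m \in \cP^{(m)}$ with $\delta_N(K, P_m) \leq (8\pi^3 + 16\pi)/m^2$. A natural candidate is the inscribed polygon whose $m$ vertices divide $\partial K$ into arcs of equal length; then $\delta_N(K, P_m) = V_2(K \setminus P_m)$ is the sum of the areas of the $m$ caps between consecutive chords and the corresponding boundary arcs. Each such arc has length at most $1/m$ times the boundary length of $K$, which is at most $4$ since $K \subset [0,1]^2$, and the total turning of $\partial K$ equals $2\pi$. Estimating the area of a cap over an arc of length $\sigma$ and turning $\tau$ by an expression of the form $c\,\sigma^2\tau$ and summing, the identity $\sum_k \tau_k = 2\pi$ produces the order $m^{-2}$, and keeping track of the constants (together with a boundary-length correction term) yields the stated bound. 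I expect this step—specifically, pinning down the explicit constant rather than a merely asymptotic-in-$m$ statement—to require the most care.

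\emph{The Legendre estimator.} Since the products $L_i(x_1)L_j(x_2)$ form an orthonormal basis of $L^2([0,1]^2)$, Bessel's inequality applied to $\mathbf{1}_K - \mathbf{1}_{P_m}$ gives $\sum_{i,j=0}^N |\lambda_{ij}(K) - \lambda_{ij}(P_m)|^2 \leq |\mathbf{1}_K - \mathbf{1}_{P_m}|^2_{L^2([0,1]^2)} = \delta_N(K, P_m)$. Because $\eL$ minimizes $\sum_{i,j=0}^N (\lambda_{ij}(K) - \lambda_{ij}(P))^2$ over $P \in \cP^{(m)}$ and $P_m \in \cP^{(m)}$, the same bound holds with $P_m$ replaced by $\eL$, so in particular
\[
\sum_{i,j=1}^N |\lambda_{ij}(K) - \lambda_{ij}(\eL)|^2 \;\leq\; \sum_{i,j=0}^N |\lambda_{ij}(K) - \lambda_{ij}(\eL)|^2 \;\leq\; \delta_N(K, P_m) \;\leq\; \frac{8\pi^3 + 16\pi}{m^2}.
\]
Applying Theorem~\ref{ThmEstimateLegendre} with $\varepsilon^2 = (8\pi^3 + 16\pi)/m^2$ yields the second assertion.

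\emph{The geometric estimator.} Here the monomials are not orthonormal, so I would argue as in the proof of Theorem~\ref{StabResFunc}. For $w \in L^2([0,1]^2)$ and $n \in \N_0$, write $M_n := (\mu_{ij}(w))_{i,j=0}^n$; expressing the monomials $x_1^i x_2^j$ in the Legendre basis through the inverse of the lower-triangular coefficient matrix $C_n$ from the proof of Theorem~\ref{StabResFunc} gives $M_n = C_n^{-1}\Lambda_n (C_n^{-1})^\top$ with $\Lambda_n := (\lambda_{ij}(w))_{i,j=0}^n$. Submultiplicativity of the Frobenius norm, Bessel's inequality for $\Lambda_n$, and the identity $|C_n^{-1}|_F^2 = \tr((C_n^\top C_n)^{-1}) = \tr(H_n) = \sum_{k=0}^n \tfrac{1}{2k+1} \leq 1 + \tfrac12 \ln(2n+1)$, with $H_n$ the Hilbert matrix (recall $C_n^\top C_n = H_n^{-1}$ from the proof of Theorem~\ref{StabResFunc}), then give
\[
\sum_{i,j=0}^n \mu_{ij}(w)^2 \;=\; |M_n|_F^2 \;\leq\; \Big(1 + \tfrac12 \ln(2n+1)\Big)^2 \, |w|^2_{L^2([0,1]^2)} .
\]
Applying this with $w = \mathbf{1}_K - \mathbf{1}_{P_m}$ and invoking the least squares property of $\hat{P}_m$ with respect to the first $(N+1)^2$ geometric moments bounds $\sum_{i,j=0}^n (\mu_{ij}(K) - \mu_{ij}(\hat{P}_m))^2$, for each $n \leq N$, by a constant multiple of $\delta_N(K,P_m) \leq (8\pi^3+16\pi)/m^2$, the constant being the logarithmic factor $(1+\tfrac12\ln(2n+1))^2$ originating from $\tr(H_n)$. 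Substituting this (with $\varepsilon^2$ chosen accordingly) into the estimate of Theorem~\ref{ThmEstimateFiniteMoments}—whose term $a_1/(n+1)$ comes from the mollification argument used there and whose factor $e^{7(n+1)}$ is inherited directly—yields the first assertion. Apart from the bookkeeping and the two stability theorems, the only genuinely new ingredient is the polygonal approximation estimate of the first step; once that is available, the proof is just a chain of the inequalities above.
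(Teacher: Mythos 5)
Your proposal is correct in structure and, for two of its three steps, essentially reproduces the paper's argument: the Bessel/Parseval bound $\sum_{i,j=0}^N(\lambda_{ij}(K)-\lambda_{ij}(P))^2\le\delta_N(K,P)$ fed into Theorem~\ref{ThmEstimateLegendre}, and the matrix relation $M=C^{-1}\Lambda(C^{-1})^\top$ with $|C^{-1}|_F^2=\tr(H_N)\le 1+\tfrac12\ln(2N+1)$ fed into Theorem~\ref{ThmEstimateFiniteMoments}, are exactly the paper's steps. The one place where you genuinely deviate is the polygonal approximation $\min_{P\in\cP^{(m)}}\delta_N(K,P)\le(8\pi^3+16\pi)/m^2$. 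The paper does not argue with caps at all: it quotes the known Hausdorff bound $\min_{P\in\cP^{(m)}}\dH(K,P)\le V_1(K)\sin(\pi/m)/\bigl(m(1+\cos(\pi/m))\bigr)\le 2\pi/m^2$ from \cite{Bronstein.2008} and converts Hausdorff to Nikodym distance via the Steiner formula, $\delta_N(K,P)\le 8\dH(K,P)+2\pi\dH(K,P)^2$, which is precisely where the (rather generous) constant $8\pi^3+16\pi$ originates. Your direct inscribed-polygon argument can also deliver an $m^{-2}$ bound, likely with a smaller constant, but as written it is the soft spot: the estimate ``cap area $\le c\,\sigma^2\tau$'' is only immediate for small turning $\tau$, and when a single equal-length arc absorbs turning close to $\pi$ (e.g.\ near a corner of $K$) you need a separate bound (such as the universal estimate that a region enclosed by a convex arc of length $\sigma$ and its chord has area at most $\sigma^2/(2\pi)$) before summing against $\sum_k\tau_k=2\pi$; moreover, since the theorem asserts the specific constant $8\pi^3+16\pi$, you must actually carry out the constant-tracking you defer and check it does not exceed that value. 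Citing the approximation result and using the Steiner formula, as the paper does, sidesteps this bookkeeping. One further remark, which applies equally to the paper's own write-up: the least squares property of $\hat{P}_m$ controls only the full sum up to order $N$, so truncation yields the factor $\bigl(1+\tfrac12\ln(2N+1)\bigr)^2$ rather than $\bigl(1+\tfrac12\ln(2n+1)\bigr)^2$ for $n<N$; your argument is no worse than the paper's on this point, but be aware the stated $n$-dependence of the logarithmic factor is not literally what the chain of inequalities gives.
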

%-------------------------------------------------------
\begin{proof}
Let $P \in \cP^{(m)}$ and define $f:=\mathbf{1}_P-\mathbf{1}_K$. Using the notation $f_N, C, L, M $ and $H_N$ from the proof of Theorem~\ref{StabResFunc}, we obtain that
\begin{align*}
& \sqrt{\sum\limits_{i,j=0}^N (\mu_{ij}(P)-\mu_{ij}(K))^2}\\
& = |M|_F = |C^{-1}L (C^{-1})^\top|_F \leq |C^{-1}|^2_F |L|_F\\
& \leq \left(1+\frac{1}{2}\ln(2N+1)\right) \|f_N\|_2\leq
\left(1+\frac{1}{2}\ln(2N+1)\right) \|f\|_2\\
& = \left(1+\frac{1}{2}\ln(2N+1)\right) \sqrt{\delta_N(K,P)}
,
\end{align*}
where we have used that
\begin{align*}
|C^{-1}|^2_F&=\tr(C^{-1}(C^{-1})^\top)= \tr(H_N)
\\
&=\sum\limits_{i=0}^N \frac{1}{2i+1}\leq 1+\int\limits_0^N \frac{1}{2x+1}dx = 1+ \frac{1}{2}\ln(2N+1)
\end{align*}
by the definition of the Hilbert matrix $H_N$. From \cite[p. 730]{Bronstein.2008}, the monotonicity of the intrinsic volumes, and the fact that $\sin(x)\leq x$ for $x\geq 0$, we obtain that
\[
\min\limits_{P\in\cP^{(m)}}\dH(K,P)\leq \frac{V_1(K) \sin(\frac{\pi}{m})}{m(1+\cos(\frac{\pi}{m}))}\leq \frac{2 \pi}{m^2}.
\]
Further, the definition of the Hausdorff metric and the Steiner formula yield that
\begin{align*}
\delta_N(K, P) &\leq V_2((K + \dH(K,P)B^2 )\setminus K) + V_2((P + \dH(K,P)B^2) \setminus P)
\\
&\leq 8\dH(K,P) + 2 \pi \dH(K,P)^2 
\end{align*}
for $P \in \cP^{(m)}$, so
\begin{equation}\label{UniformBound}
\min\limits_{P\in\cP^{(m)}}\delta_N(K,P)\leq \frac{8\pi^3 + 16\pi}{m^2}.
\end{equation}
Therefore,
\begin{align*}
\min\limits_{P\in\cP^
{(m)}} \sum\limits_{i,j=0}^N (\mu_{ij}(P)-\mu_{ij}(K))^2 &\leq \left(1+\frac{1}{2}\ln(2N+1)\right)^2 \min\limits_{P\in\cP^{(m)}}\delta_N(K,P)\\
& \leq \left(1+\frac{1}{2}\ln(2N+1)\right)^2 \frac{8\pi^3 + 16\pi}{m^2}.
\end{align*}
Then, Theorem \ref{ThmEstimateFiniteMoments} and Remark~\ref{NikodymExt} yields that
\[
\delta_N(\hat{P}_m,K) \leq
a_0(n+1)^2 e^{7(n+1)}\left(1+\frac{1}{2}\ln(2n+1)\right)^2 \frac{8\pi^3 + 16\pi}{m^2}+ \frac{a_1}{(n+1)}
\]
for $n=0, \dots, N$. For $P \in \cP^{(m)}$, Parseval's identity yields that 
\begin{equation*}
\sum_{i,j=0}^N (\lambda_{ij}(K) - \lambda_{ij}(P))^2 \leq \|\textbf{1}_K - \textbf{1}_P \|_2^2 =\delta_N(K,P),
\end{equation*}
so we obtain from \eqref{UniformBound}, Theorem~\ref{ThmEstimateLegendre} and Remark~\ref{NikodymExt} that
\begin{equation*}
\delta_N(\eL,K) \leq \frac{8\pi^3 + 16\pi}{m^2} + \frac{a_1}{N+1}
\end{equation*}
for $P \in \cP^{(m)}$.
\end{proof}

In Theorem~\ref{ThmLSQdistbounds}, the upper bound on the distance between the convex body $K$ and the least squares estimator $\hat{P}_m$ based on geometric moments decreases polynomially in the number of vertices $m$, but increases exponentially in the number of moments $N$. However, for the least squares estimator $\eL$ based on Legendre moments, the upper bound decreases polynomially in both $N$ and $m$. Therefore, we concentrate on reconstruction  from Legendre moments in Section~\ref{SecReconstruction}.

\section{Reconstruction based on Legendre moments}\label{SecReconstruction}
In this section, we develop a reconstruction algorithm for a convex body $K\subset [0,1]^2$ based on Legendre moments. To simplify an optimization problem, we approximate $K$ by a polygon with prescribed outer normals. Thus, the input of the algorithm is the first $(N+1)^2$ Legendre moments of $K$ for some $N \in \N_0$, and the output is a polygon $P \subset [0,1]^2$ with prescribed outer normals satisfying that the Euclidean distance between the first $(N+1)^2$ Legendre moments of $P$ and $K$ is minimal.

\subsection{Reconstruction algorithm} \label{Sec_LSQLegendre}
Let $0 \leq \theta_1 < \dots < \theta_n < 2\pi$, and let $c_i:=\cos(\theta_i)$, $s_i:=\sin(\theta_i)$ and  $u_i:=[c_i,s_i]^\top$ for $1 \leq i \leq n$. We assume that
\begin{equation}\label{condition_u}
\left\{\sum\limits_{i=1}^n \lambda_i u_i: \lambda_i\geq 0, 1\leq i\leq n\right\}=\R^2.
\end{equation}
For $h_1,\ldots,h_n\in\R$, let

\[
P(h_1,\ldots,h_n):=\bigcap\limits_{i=1}^n \{x\in \R^2: \langle x,u_i\rangle\leq h_i\}.
\]
A vector $(h_1,\ldots,h_n)\in \R^n$ is called consistent with respect to $(\theta_1, \dots, \theta_n)$ if the polygon $P(h_1,\ldots,h_n)$ has support function value $h_i$ in the direction $u_i$ for $1\leq i\leq n$. In \cite[p. 1696]{Lele.1992}, it is shown that $(h_1,\ldots,h_n)$ is consistent if and only if
\begin{align*}
 h_{i-1}(s_{i+1}c_i-c_{i+1}s_i) - h_i(s_{i+1}c_{i-1}-c_{i+1}s_{i-1})+h_{i+1}(s_{i}c_{i-1}-c_{i}s_{i-1})\geq 0
\end{align*}
for $1\leq i\leq n$, where we define $h_0:=h_n$ and $h_{n+1}:=h_1$. We let $\cPt$ denote the set of polygons $P(h_1, \dots, h_n) \subset [0,1]^2$ where $(h_1, \dots, h_n) \in \R^n$ is consistent with respect to $(\theta_1, \dots, \theta_n)$.

Now let $K\subset [0,1]^2$ be a convex body. Any polygon $\eP \in \cP(\theta_1, \dots, \theta_n)$ satisfying
\[
\eP = \argmin \left\{ \sum\limits_{k,l=0}^N (\lambda_{kl}(K)-\lambda_{kl}(P))^2:\quad P\in\cPt \right\}
\]
is called a least squares estimator of $K$ with respect to the first $(N+1)^2$ moments on the space $\cPt$. As $\cPt$ is closed in the Hausdorff metric, Blaschke's selection theorem ensures the existence of a least squares estimator.

In the following, we let the directions $0 \leq \theta_1 < \dots < \theta_n < 2\pi$ be fixed. We use the notation $s_i, c_i$ and $u_i$ as introduced above and assume that condition~\eqref{condition_u} is satisfied. When $(h_1,\ldots,h_n)\in\R^n$ is consistent with respect to $(\theta_1, \dots, \theta_n)$, we write
\[v_i:=H(u_i,h_i)\cap H(u_{i+1},h_{i+1}),\quad 1\leq i\leq n\]
for the vertices of $P(h_1,\ldots,h_n)$, see Figure~\ref{PolygonWithFixedNormals}.
\begin{figure}[h!]
  \centering
  % Requires \usepackage{graphicx}
  \includegraphics[width=5cm]{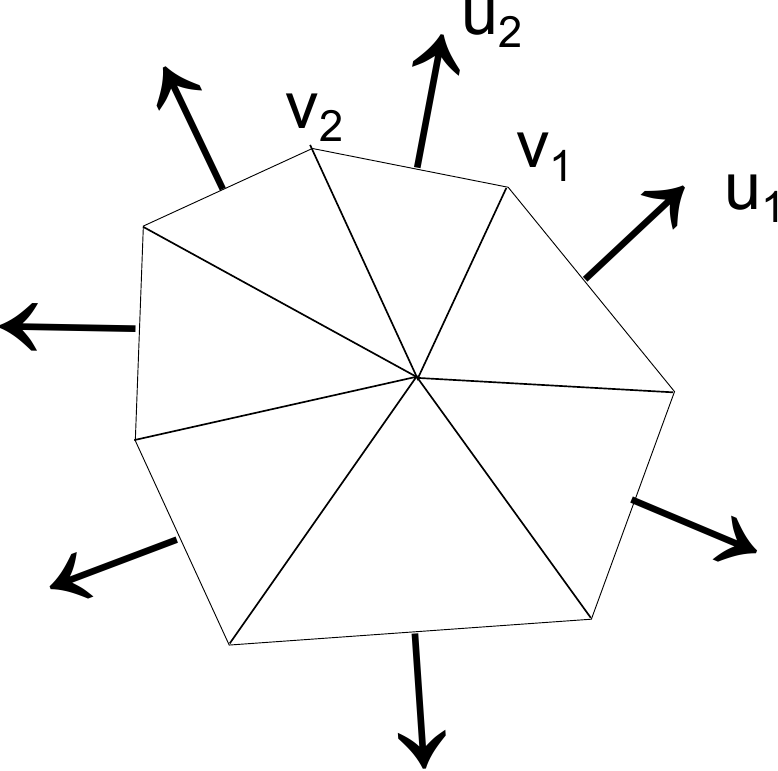}
  \caption{Polygon with normals $u_1,\ldots,u_n$.}\label{PolygonWithFixedNormals}
\end{figure}

In Lemma~\ref{MomentsFromh}, the geometric moments and the Legendre moments of polygons of the form $P(h_1, \dots, h_n)$ are expressed in terms of $(h_1, \dots, h_n)$.
%%%%%%%%%%%%%%%%%%%%%%%%%%%%%%%%%%%%%%%%%%%%%%%%%%%%%%%%%%%%%%%

\begin{lem}\label{MomentsFromh}
Let $(h_1, \dots, h_n) \in \R^n$ be consistent with respect to  $(\theta_1, \dots, \theta_n)$. Then the geometric moments and the Legendre moments of $P(h_1, \dots, h_n)$ are polynomials in $(h_1, \dots, h_n)$. More precisely,
\begin{equation}\label{momentexpr}
\mu_{kl}(P(h_1,\ldots,h_n))= \sum\limits_{i=1}^n \sum\limits_{q_1=0}^{k+l+1}\sum\limits_{q_2=0}^{k+l+2-q_1} M_{kl}(i,q_1,q_2)h_i^{q_1}h_{i+1}^{q_2}h_{i+2}^{k+l+2-q_1-q_2}
\end{equation}
and
\[
 \lambda_{kl}(P(h_1,\ldots,h_n)) =\sum\limits_{i=1}^n \sum_{s=0}^{k+l} \sum\limits_{q_1=0}^{s+1}\sum\limits_{q_2=0}^{s+2-q_1} L_{kl}(i,s,q_1,q_2)h_i^{q_1}h_{i+1}^{q_2}h_{i+2}^{s+2-q_1-q_2},
\]
for $k, l \in \N_0$ and known real constants $M_{kl}(i, q_1, q_2)$ and $L_{kl}(i,s,q_1,q_2)$.
\end{lem}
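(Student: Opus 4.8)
The plan is to reduce the claim to two elementary facts: first, that each vertex $v_i$ of $P(h_1,\dots,h_n)$ is an explicit \emph{linear} function of the pair $(h_i,h_{i+1})$; and second, that the geometric moments of a convex polygon are polynomials in the coordinates of its vertices, of controlled degree and with a structure that makes the $i$-th contribution depend only on $v_{i-1}$ and $v_i$. For the first fact, the vertex $v_i$ is by definition the unique point with $\langle v_i,u_i\rangle=h_i$ and $\langle v_i,u_{i+1}\rangle=h_{i+1}$, so Cramer's rule gives
\[
v_i=\frac{1}{c_is_{i+1}-c_{i+1}s_i}\begin{pmatrix} s_{i+1}h_i-s_ih_{i+1}\\ c_ih_{i+1}-c_{i+1}h_i\end{pmatrix},
\]
where $c_is_{i+1}-c_{i+1}s_i=\sin(\theta_{i+1}-\theta_i)\neq0$ since consecutive outer normals of $P(h_1,\dots,h_n)$ are not antipodal. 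Hence both coordinates of $v_i$ are homogeneous linear functions of $(h_i,h_{i+1})$ with coefficients depending only on $\theta_i,\theta_{i+1}$.

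For the second fact, I would apply the divergence theorem to the vector field $x\mapsto x\,x_1^kx_2^l$, whose divergence equals $(k+l+2)\,x_1^kx_2^l$: since the $i$-th edge of $P(h_1,\dots,h_n)$ is the segment $[v_{i-1},v_i]$, has outer normal $u_i$, and satisfies $\langle x,u_i\rangle=h_i$ on that edge, this yields
\[
\mu_{kl}(P(h_1,\dots,h_n))=\frac{1}{k+l+2}\sum_{i=1}^n h_i\int_{[v_{i-1},v_i]}x_1^kx_2^l\,d\mathcal{H}^1 .
\]
Parametrising $[v_{i-1},v_i]$ affinely by $t\in[0,1]$, the edge integral equals $\|v_i-v_{i-1}\|$ times a homogeneous polynomial of degree $k+l$ in the four coordinates of $v_{i-1},v_i$ whose coefficients are Beta-function values. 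Using that $h_i\,\|v_i-v_{i-1}\|=\pm\det[v_{i-1},v_i]$ (both being, up to sign, twice the area of the triangle with vertices $0,v_{i-1},v_i$ computed from the base $[v_{i-1},v_i]$ and the height $h_i$ in direction $u_i$), the $i$-th summand becomes a homogeneous polynomial of degree $k+l+2$ in $(v_{i-1},v_i)$ that carries the factor $\det[v_{i-1},v_i]$; in particular it has degree at most $k+l+1$ in $v_{i-1}$ and at most $k+l+1$ in $v_i$.

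Substituting the formulas above — $v_{i-1}$ depends only on $(h_{i-1},h_i)$ and $v_i$ only on $(h_i,h_{i+1})$ — the $i$-th summand turns into a homogeneous polynomial of degree $k+l+2$ in $(h_{i-1},h_i,h_{i+1})$; since $h_{i-1}$ enters only through $v_{i-1}$ its exponent is at most $k+l+1$, and likewise for $h_{i+1}$. Relabelling the summation index $i\mapsto i+1$ (indices modulo $n$) gives exactly \eqref{momentexpr} with $M_{kl}(i,q_1,q_2)$ the resulting rational constants. For the Legendre moments, expanding $L_k(x_1)L_l(x_2)=\sum_{p=0}^k\sum_{q=0}^l C_{kp}C_{lq}x_1^px_2^q$ gives $\lambda_{kl}(P(h_1,\dots,h_n))=\sum_{p=0}^k\sum_{q=0}^l C_{kp}C_{lq}\,\mu_{pq}(P(h_1,\dots,h_n))$; inserting \eqref{momentexpr} for each $\mu_{pq}$ and collecting terms according to the value $s:=p+q\in\{0,\dots,k+l\}$ produces the stated expression with $L_{kl}(i,s,q_1,q_2)=\sum_{p+q=s,\,p\le k,\,q\le l}C_{kp}C_{lq}\,M_{pq}(i,q_1,q_2)$.

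The main obstacle is bookkeeping rather than anything conceptual: one must check that the sign conventions make the edge sum reproduce $\mu_{kl}(P(h_1,\dots,h_n))$ regardless of whether the origin lies inside $P(h_1,\dots,h_n)$, and one must verify carefully that the exponents of the two ``outer'' variables in each summand are genuinely bounded by $k+l+1$ (for the Legendre case, $s+1$), since this is what pins down the summation ranges asserted in the lemma. Everything else is a direct, if tedious, computation of the constants $M_{kl}(i,q_1,q_2)$ and $L_{kl}(i,s,q_1,q_2)$.
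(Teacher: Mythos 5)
Your proposal is correct, and it reaches the key intermediate formula by a different route than the paper. You convert the area integral into boundary terms via the divergence theorem applied to $x\mapsto x\,x_1^kx_2^l$, obtaining $(k+l+2)\,\mu_{kl}(P)=\sum_i h_i\int_{[v_{i-1},v_i]}x_1^kx_2^l\,d\mathcal{H}^1$, and then evaluate each edge integral by an affine parametrisation (Beta-type integrals), using $h_i\lVert v_i-v_{i-1}\rVert=\det[v_{i-1},v_i]$. The paper instead writes $P(h_1,\ldots,h_n)$ as a signed union of triangles $\conv\{0,v_i,v_{i+1}\}$ (the closure of $A\setminus B$), computes $\mu_{kl}(\conv\{0,v_i,v_{i+1}\})$ by the affine substitution onto the standard triangle, and controls the sign through the Jacobian identity $\det[v_i,v_{i+1}]=h_{i+1}(\overline{t}_i+t_{i+1})$; the two computations are equivalent (applying your divergence argument to a single triangle recovers the paper's signed summand $\sign(h_{i+1})\mu_{kl}(\conv\{0,v_i,v_{i+1}\})$). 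Your route has the small advantage that the position of the origin never enters: the boundary formula holds verbatim whether or not $0\in P$, so the sign issue you flag reduces to the single orientation fact that $\theta_1<\dots<\theta_n$ labels the vertices counterclockwise, whence $h_i\lVert v_i-v_{i-1}\rVert=\det[v_{i-1},v_i]$ with the correct sign; the paper's route, by contrast, needs the $A\setminus B$ decomposition and the sign analysis, but yields the constants $M_{kl}(i,q_1,q_2)$ directly from moments of the standard triangle. Your degree bookkeeping is sound: the edge-integral polynomial has degree at most $k+l$ in each of $v_{i-1}$, $v_i$, the determinant adds one, and since $v_{i-1}$ is linear in $(h_{i-1},h_i)$ and $v_i$ in $(h_i,h_{i+1})$, the exponents of the two outer variables are at most $k+l+1$, so after the shift $i\mapsto i+1$ all terms fall inside the summation ranges of \eqref{momentexpr}. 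The vertex formula via Cramer's rule and the reduction of Legendre moments to geometric moments by expanding $L_k(x_1)L_l(x_2)$ and collecting by $s=p+q$ coincide with the paper's argument.
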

%%%%%%%%%%%%%%%%%%%%%%%%%%%%%%%%%%%%%%%%%%%%%%%%%%%%%%%%%%%%%%%
\begin{proof}
Observe that
\[
P(h_1,\ldots,h_n)= \cl(A\setminus B),\]
where
\[A:=\bigcup\limits_{\substack{1\leq i\leq n\\ h_{i+1}\geq 0}} \conv\{0,v_{i},v_{i+1}\} \text{ and }B:=\bigcup\limits_{\substack{1\leq i\leq n\\ h_{i+1}<0}}\conv\{0,v_{i},v_{i+1}\} \]
and $v_1, \dots, v_n$ are the vertices of $P(h_1, \dots, h_n)$, see Figure~\ref{RepresentationOfPolygon}. In particular, we have $B\subset A$, so the moments of $P(h_1,\ldots,h_n)$ are equal to the sum
 \begin{equation} \label{momentsum}
 \mu_{kl}(P(h_1,\ldots,h_n))=
 \sum\limits_{i=1}^n \sign(h_{i+1}) \mu_{kl}(\conv\{0,v_i,v_{i+1}\}).
 \end{equation}

  \begin{figure}
  \centering
  % Requires \usepackage{graphicx}
  \includegraphics[width=5cm]{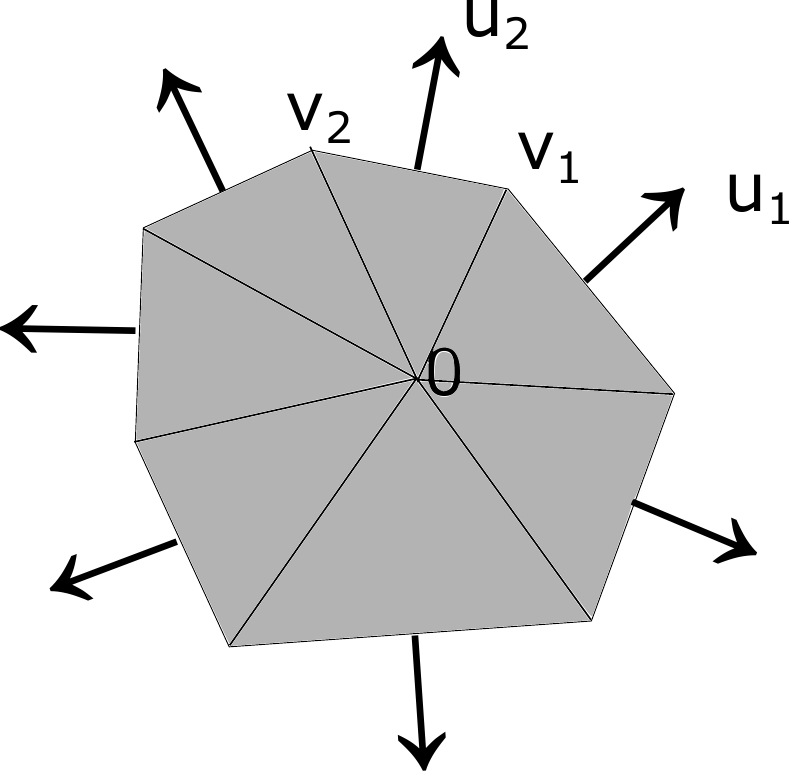}
  \includegraphics[width=5cm]{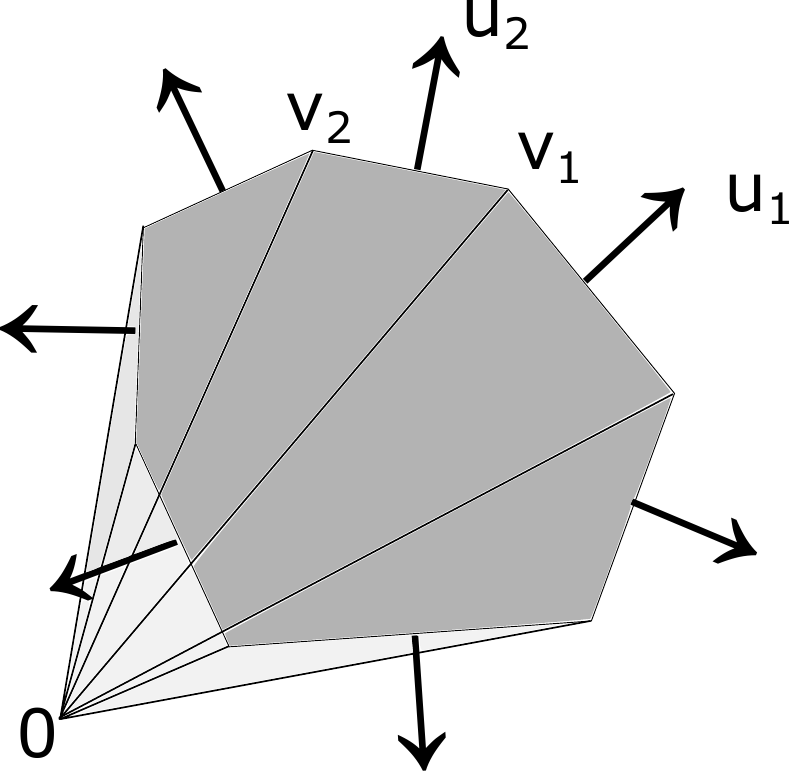}

  \caption{Representation of polygons $P(h_1,\ldots,h_n)$ as difference of the sets $A$ (bright and dark grey) and $B$ (bright grey).}
  \label{RepresentationOfPolygon}
\end{figure}

For $i=1, \dots, n$, let $\overline{u}_i:=(-s_i,c_i)^\top$. Then there exist unique $t_i,\overline{t}_i \in \R$ with 
\begin{equation}\label{vertex}
v_i= h_i u_i +t_i \overline{u}_i = h_{i+1}u_{i+1}-\overline{t}_i\overline{u}_{i+1}.
\end{equation}
This implies that
\begin{equation*}
 \begin{pmatrix}\overline{u}_i\, \overline{u}_{i+1}\end{pmatrix} \begin{pmatrix}t_i\\ \overline{t}_i\end{pmatrix} = h_{i+1}u_{i+1}-h_i u_i,
\end{equation*}
and thus
\begin{align*}
\begin{pmatrix}t_i\\ \overline{t}_i\end{pmatrix}&= \frac{1}{-s_i c_{i+1}+c_i s_{i+1}}
 \begin{pmatrix}c_{i+1}& s_{i+1}\\
 -c_i& -s_i\end{pmatrix} \begin{pmatrix}h_{i+1}c_{i+1}-h_i c_i\\ h_{i+1}s_{i+1}-h_i s_i\end{pmatrix}\\
& =\frac{1}{-s_i c_{i+1}+c_i s_{i+1}} \begin{pmatrix}
h_{i+1}-h_i( c_i c_{i+1}+ s_i s_{i+1})\\h_i-h_{i+1}(c_i c_{i+1}+s_i s_{i+1}) \end{pmatrix}.
 \end{align*}
Substituting this expression of $(t_i\, \overline{t}_i)^{\top}$ into \eqref{vertex}, the vertex $v_i$ can be expressed by $(h_i,h_{i+1})$ and $(u_i, u_{i+1})$. We obtain that
 \begin{equation} \label{vertexexpr}
 v_i = \frac{1}{c_i s_{i+1}-s_i c_{i+1}}\begin{pmatrix}
 h_i s_{i+1}-h_{i+1} s_i\\
h_{i+1}c_i - h_i c_{i+1} \end{pmatrix}.
\end{equation}

Now define $T_i(x_1,x_2):= \begin{pmatrix}v_i, v_{i+1}\end{pmatrix} \begin{pmatrix}x_1\\ x_2\end{pmatrix}= \begin{pmatrix}v_{i,1}x_1+v_{i+1,1}x_2\\ v_{i,2}x_1+v_{i+1,2}x_2\end{pmatrix}.$ Integration by substitution then yields that
 \begin{align*}
 &\mu_{kl}(\conv\{0,v_i,v_{i+1}\})\\
 &= \int\limits_{\conv\{0,v_i,v_{i+1}\}} x_1^{k}x_2^l \, d(x_1,x_2)\\
 & = \int\limits_{\conv\{0,e_1,e_2\}} (v_{i,1}x_1+v_{i+1,1}x_2)^{k} (v_{i,2}x_1+v_{i+1,2}x_2)^l  \lvert v_{i,1}v_{i+1,2}-v_{i,2}v_{i+1,1} \rvert \, d(x_1,x_2).
\end{align*}
Using \eqref{vertex}, the Jacobian determinant of $T_i$ can be expressed as $h_{i+1}(\overline{t}_i + t_{i+1})$, and since $\overline{t}_i + t_{i+1}$ is the length of the facet of $P(h_1, \dots, h_n)$ bounded by $v_i$ and $v_{i+1}$, it follows that
\begin{equation*}
\sign(v_{i,1}v_{i+1,2}-v_{i,2}v_{i+1,1}) = \sign(h_{i+1}).
\end{equation*}
This implies that
\begin{align}
& \sign(h_{i+1}) \mu_{kl}(\conv\{0,v_i,v_{i+1}\}) \notag\\
 & =\int\limits_0^1\int\limits_0^{1-x_2} (v_{i,1}x_1+v_{i+1,1}x_2)^{k} (v_{i,2}x_1+v_{i+1,2}x_2)^l (v_{i,1}v_{i+1,2}-v_{i,2}v_{i+1,1})dx_1 dx_2\label{GeometricMomentTriangle} \\ 
  & =  \sum\limits_{q_1=0}^{k+l+1}\sum\limits_{q_2=0}^{k+l+2-q_1}
 M_{kl}(i,q_1,q_2) h_i^{q_1} h_{i+1}^{q_2} h_{i+2}^{k+l+2-q_1-q_2},\notag
 \end{align}
 with constants $M_{kl}(i,q_1, q_2)$ which are explicitely derived in Lemma \ref{Lem:GeometricMomentCoefficients} using \eqref{vertexexpr}. In combination with \eqref{momentsum}, this yields \eqref{momentexpr}. Furthermore, we obtain from formula \eqref{FormulaForLegendreMoments} for the Legendre moments that
 \begin{align*}
 &\lambda_{kl}(P(h_1,\ldots,h_n))= \sum\limits_{p=0}^{k}\sum\limits_{q=0}^{l} C_{kp} C_{lq} \mu_{pq}(P(h_1,\ldots,h_n))\\
& =   \sum\limits_{s=0}^{k+l}\sum\limits_{q=s-k \vee 0}^{s \wedge l} C_{k,s-q} C_{lq}\sum\limits_{i=1}^n  \sum\limits_{q_1=0}^{s+1}\sum\limits_{q_2=0}^{s+2-q_1} M_{s-q,q}(i,q_1,q_2)h_i^{q_1}h_{i+1}^{q_2}h_{i+2}^{s+2-q_1-q_2}\\
& =\sum\limits_{i=1}^n \sum_{s=0}^{k+l} \sum\limits_{q_1=0}^{s+1}\sum\limits_{q_2=0}^{s+2-q_1} L_{kl}(i,s,q_1,q_2)h_i^{q_1}h_{i+1}^{q_2}h_{i+2}^{s+2-q_1-q_2},
 \end{align*}
 where
 \[
 L_{kl}(i,s,q_1,q_2):=\sum\limits_{q=s-k \vee 0}^{s \wedge l} C_{k,s-q} C_{lq}M_{s-q,q} (i,q_1,q_2).
 \]
 % Expression of the M's. 
% \begin{align}
% & M_{i,k,l}(q_1,q_2):=
% %\sum\limits_{p+q=0\vee (q_1-1)}^{(q_1+q_2-1)\wedge (k+l)}=
% \sum\limits_{p=0\vee(l-q_1+1)}^{k\wedge(q_1+q_2-1) }\sum\limits_{q=0\vee (q_1-1-p)}^{l\wedge(q_1+q_2-p-1)} \binom{k}{p}\binom{l}{q}\frac{1}{(p+q+1)(k+l+2)}\nn\\
% %
% &\times\left(c_i s_{i+1}-s_i c_{i+1}\right)^{-p-q-1}\left( c_{i+1}s_{i+2}-s_{i+1}c_{i+2}\right)^{-k-l+p+q-1}\nn\\
% %
% &\times \Big[\sum\limits_{a_1=0\vee(q_1-q)}^{q_1\wedge (p+1)}\sum\limits_{a_2=0\vee(l-q_1-q_2+p+2)}^{(k-p)\wedge (q_2+q_1-p-q-1)}\binom{p+1}{a_1}\binom{k-p}{a_2}\binom{q}{q_1-a_1}
% \binom{l-q+1}{q_2+q_1-p-q-a_2-1}\nn\\
%  %
%  &\times (-1)^{k+q_2-p-q}c_i^{q-q_1+a_1} s_i^{p+1-a_1} c_{i+1}^{l-q_2+p+a_2+2-a_1} s_{i+1}^{k-p-a_2+a_1} c_{i+2}^{q_2+q_1-p-q-a_2-1}s_{i+2}^{a_2}
%  \nn\\
% %
% &- \sum\limits_{a_1=0\vee(q_1-q-1)}^{q_1\wedge p}\sum\limits_{a_2=0\vee(l-q_2-q_1+p+1)}^{(k-p+1)\wedge(q_2+q_1-p-q-1)}
% \binom{p}{a_1}\binom{k-p+1}{a_2} \binom{q+1}{q_1-a_1}
% \binom{l-q}{q_2+q_1-p-q-a_2-1}\nn\\
% %
% &\times (-1)^{k+q_2-p-q} c_i^{q+1-q_1+a_1}s_i^{p-a_1} c_{i+1}^{l-q_2+p+a_2-a_1+1} s_{i+1}^{k-p+1-a_2+a_1} c_{i+2}^{q_2+q_1-p-q-a_2-1}s_{i+2}^{a_2}
% \Big].\label{MFormula} \end{align}
\end{proof}

The structure of $\cP(\theta_1, \dots, \theta_n)$ ensures that a least squares estimator can be reconstructed using polynomial optimization. This follows as Lemma~\ref{MomentsFromh} yields that $\eP=P(\hat{h}_1,\ldots,\hat{h}_n)$ is a least squares estimator of $K$, where $(\hat{h}_1,\ldots,\hat{h}_n)$ is the solution of the polynomial optimization problem
\begin{equation}\label{OptProb}
(\hat{h}_1,\ldots,\hat{h}_n)=
\argmin\{f(h_1,\ldots,h_n):(h_1,\ldots,h_n)\in A_n\},
\end{equation}
where the objective function $f:\R^n\rightarrow [0,\infty)$ is defined by
\[
f(h_1,\ldots,h_n)=\sum\limits_{k,l=0}^{N}
\left(\lambda_{kl}(K)-\sum\limits_{i=1}^n \sum_{s=0}^{k+l} \sum\limits_{q_1=0}^{s+1}\sum\limits_{q_2=0}^{s+2-q_1} L_{kl}(i,s,q_1,q_2)h_i^{q_1}h_{i+1}^{q_2}h_{i+2}^{s+2-q_1-q_2}\right)^2
\]
and the feasible set $A_n$ is the set of vectors $(h_1,\ldots,h_n)\in(-\infty,\infty)^n$ which fulfil the inequalities
\begin{align*}
&  0\leq h_{i-1}(s_{i+1}c_i-c_{i+1}s_i) - h_i(s_{i+1}c_{i-1}-c_{i+1}s_{i-1})+h_{i+1}(s_{i}c_{i-1}-c_{i}s_{i-1}),\\
&0\leq \frac{1}{c_i s_{i+1}-s_i c_{i+1}}
 (h_i s_{i+1}-h_{i+1} s_i)\leq 1, \\
& 0\leq \frac{1}{c_i s_{i+1}-s_i c_{i+1}}(h_{i+1}c_i - h_i c_{i+1} )\leq 1
\end{align*}
for $1\leq i\leq n$. Algorithms for obtaining approximate solutions of polynomial optimization problems like \eqref{OptProb} are an active field of research. We mention the software GloptiPoly, see \cite{Lasserre.2001}, which is recommended for small-scale problems. Another possible choice for a problem like \eqref{OptProb} is the software SparsePop, see \cite{Waki.2008}, which is designed for problems with a sparse structure. However, these specialized approaches become very costly as the number of variables increases. For our problem \eqref{OptProb} we provide some reconstruction examples in Chapter \ref{Chapter:Implementation}
which are obtained by using Matlab's general fmincon routine with optimization by interior point methods.

%%%%%%%%%%%%%%%%%%%%%%%%%%%%%%%%%%%%%%%%%%%%%%%%%%%%%%%%%%%%%%%%%%%%%%%%%%%%%%%%%%%

\subsection{Convergence of the reconstruction algorithm}
In this section, we use the stability result Theorem~\ref{ThmEstimateLegendre} to show that the output polygon of the reconstruction algorithm described in the previous section converges to $K$ in the Nikodym distance when the number $n$ of outer normals of the polygon and the number $N$ of moments increase.

\begin{lem}\label{SymDifEstimate}
Let $K \subset [0,1]^2$ be a convex body, $0 \leq \theta_1 < \dots < \theta_n < 2\pi$ and $\theta_{n+1}=\theta_1$. Assume that condition $\eqref{condition_u}$ is satisfied. Then
\begin{align*}
\delta_N\big(P(h_K(\theta_1),\ldots,h_K(\theta_n)),K\big) 
&\leq
\sqrt{2} \max\limits_{1\leq i\leq n}\tan\left(\frac{\theta_{i+1}-\theta_i}{2}\right).
\end{align*}
\end{lem}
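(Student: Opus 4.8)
The plan is to compare the polygon $P := P(h_K(\theta_1),\ldots,h_K(\theta_n))$ with $K$ by controlling the symmetric difference $P \setminus K$ (note $K \subseteq P$, since $P$ is an intersection of halfspaces each containing $K$, so $\delta_N(P,K) = V_2(P \setminus K)$). First I would decompose $P \setminus K$ into the ``caps'' cut off near each edge of $P$. For $1 \le i \le n$, consider the supporting line $H(u_i, h_K(\theta_i))$ of $K$; it touches $K$ at some boundary point, and between consecutive supporting lines in directions $u_i$ and $u_{i+1}$ the polygon $P$ has a vertex $v_i$ while $K$ stays behind both lines. Thus $P \setminus K = \bigcup_{i=1}^n R_i$, where $R_i$ is the region of $P$ lying ``beyond'' the arc of $\partial K$ between the contact chords of directions $u_i$ and $u_{i+1}$. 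These regions overlap only in sets of measure zero, so $V_2(P\setminus K) = \sum_i V_2(R_i)$.

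The key geometric estimate is a bound on $V_2(R_i)$ in terms of the angular gap $\Delta_i := \theta_{i+1} - \theta_i$ and the length of the boundary arc of $K$ with outer normals in $[\theta_i, \theta_{i+1}]$. The region $R_i$ is contained in the triangle with apex $v_i$ bounded by the two supporting lines and by the chord of $\partial K$ joining the two contact pieces. If this chord has length $\ell_i$, then elementary plane geometry gives that the triangle, and hence $R_i$, has area at most $\tfrac14 \ell_i^2 \tan(\Delta_i/2)$ — the apex angle at $v_i$ equals $\pi - \Delta_i$, so the two equal-ish sides have length roughly $\tfrac12 \ell_i / \cos(\Delta_i/2)$ or one uses the standard ``circular segment inscribed in a wedge'' bound. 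Summing over $i$ and using $\sum_i \ell_i \le$ (boundary length of $K$) $= 2 V_1(K)$ together with $\max_i \tan(\Delta_i/2)$ pulled out, plus the Cauchy–Schwarz–type inequality $\sum_i \ell_i^2 \le \big(\sum_i \ell_i\big) \cdot \max_i \ell_i$ or more carefully $\tfrac14 \sum_i \ell_i^2 \le \tfrac14 (2V_1(K)) \cdot \max_i \ell_i$, one arranges the constants to land at $\tfrac{1}{\sqrt 2} V_1(K) \max_i \tan(\Delta_i/2)$. The second inequality then follows from the isoperimetric-type bound $V_1(K) \le \sqrt{2}$ for convex bodies in $[0,1]^2$ (the perimeter of a convex subset of the unit square is at most that of the square, $4$, giving $V_1(K) \le 2$; to reach the sharper $V_1(K) \le 2\sqrt 2$... — here I would instead note that the factor $\tfrac{1}{\sqrt 2}V_1(K)$ combined with $V_1(K) \le 2\sqrt{2}$ would give $2\max_i\tan$, so actually one wants $V_1(K)\le 2$, yielding $\sqrt 2 \max_i \tan(\Delta_i/2)$, and $V_1(K)\le 2$ indeed holds since $\partial K$ has length $\le 4$).

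I expect the main obstacle to be the clean bookkeeping of the cap areas: making the decomposition $P \setminus K = \bigcup R_i$ rigorous (handling the correspondence between vertices $v_i$ of $P$, contact points on $\partial K$, and boundary arcs, especially when $K$ has flat pieces or the origin issues from Lemma~\ref{MomentsFromh} intervene) and getting the exact constant $\tfrac{1}{\sqrt 2}$ rather than a cruder multiple. A convenient way to avoid fiddly trigonometry is to bound $R_i$ directly by the area between the arc and its chord plus the small triangle outside — or, cleanest of all, to observe that $R_i$ is contained in the right triangle formed by dropping the relevant supporting line, whose legs are controlled by the arc length $\ell_i$ and the angle $\Delta_i$, giving $V_2(R_i) \le \tfrac12 \ell_i \cdot \tfrac12 \ell_i \tan(\Delta_i/2)$. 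With that in hand the summation and the substitution $V_1(K)\le 2$ are routine.
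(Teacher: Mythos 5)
Your proposal is correct and follows essentially the same route as the paper: decompose the excess region into triangular caps between the circumscribed polygon and the inscribed contact points, bound each cap by $\tfrac14\ell_i^2\tan\bigl(\tfrac{\theta_{i+1}-\theta_i}{2}\bigr)$ via the apex angle $\pi-(\theta_{i+1}-\theta_i)$, and sum using $\sum_i\ell_i\leq 2V_1(K)$ and $V_1(K)\leq 2$. The only step you leave implicit, namely $\max_i\ell_i\leq\sqrt{2}$ (the diameter of the unit square), is exactly how the paper converts $\tfrac14\sum_i\ell_i^2$ into $\tfrac{1}{\sqrt{2}}V_1$, so nothing essential is missing.
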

%-----------------------------------------------------------------
\begin{proof}
Choose $x_1,\ldots,x_n\in \partial K$ such that $(\cos(\theta_i), \sin(\theta_i))^{\top}$ is an outer normal of $K$ at $x_i$. Let \[P_{in}:=\conv\{x_1,\ldots,x_n\}.\] Note that $P_{in}\subset K$. Recall that the vertices of $P(h_K(\theta_1),\ldots,h_K(\theta_n))$ are denoted by $v_1,\ldots,v_n$ and let $T_i:=\conv\{x_{i},x_{i+1},v_i\}$, $c_i:=\|x_{i+1}-x_i\|$ and $\gamma_i:=\pi-(\theta_{i+1}-\theta_{i})$.
Then we clearly have
\begin{equation}\label{Est1}
P(h_K(\theta_1),\ldots,h_K(\theta_n))\setminus \intr P_{in}= \bigcup\limits_{i=1}^{n} T_i,
\end{equation}
see Figure \ref{FigureSymDifEstimate}.
\begin{figure}[h!]
  \centering
  % Requires \usepackage{graphicx}
  \includegraphics[width=5cm]{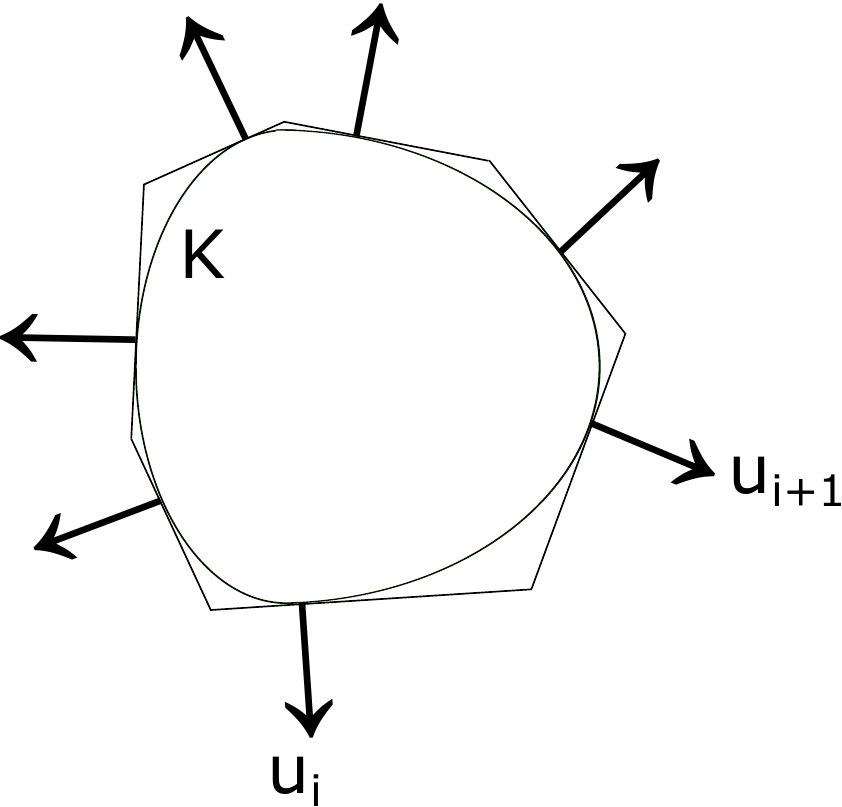}
      \includegraphics[width=5cm]{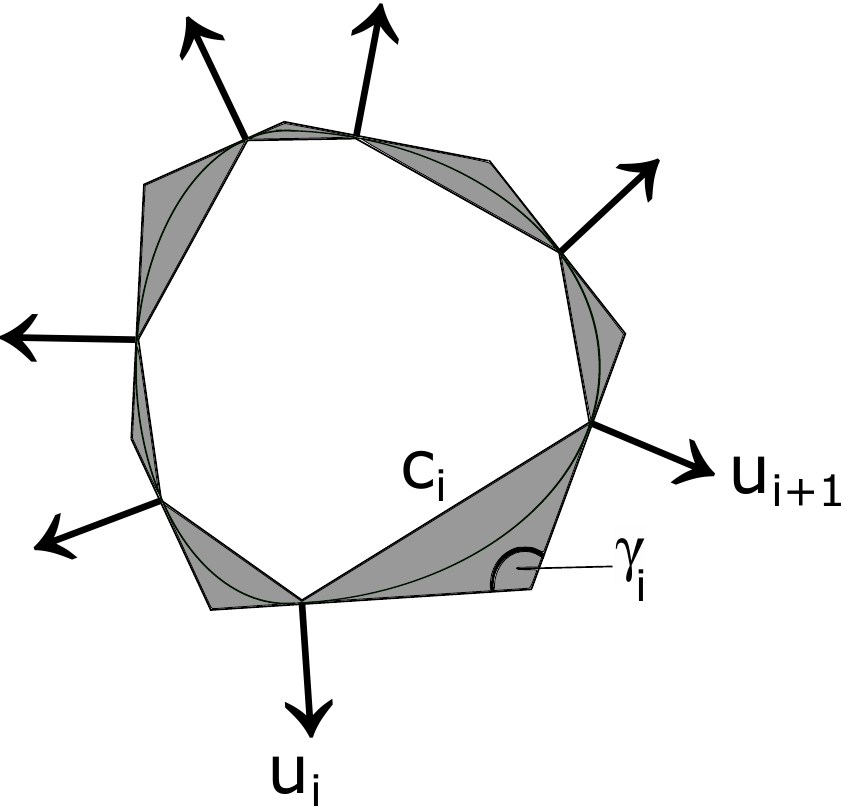}
\caption{On the left, $K$ and the polytope $P(h_K(u_1),\ldots,h_K(u_n))$. On the right, $P(h_K(u_1),\ldots,h_K(u_n))\setminus P_{in}$ coloured in grey.  }\label{FigureSymDifEstimate}
\end{figure}
Observe that the area of a triangle where one angle and the length of the side opposite to the angle are prescribed is maximal if the remaining angles are equal. Thus,
\begin{equation}\label{Est2}V_2(T_i)\leq \frac{1}{4}c_i^2\cot(\gamma_i/2)=\frac{1}{4}c_i^2\tan\left(\frac{\theta_{i+1}-\theta_i}{2}\right). \end{equation}
Equations~\eqref{Est1} and \eqref{Est2} imply that
\begin{align*}%\label{ineq}
\delta_N\big(P(h_K(\theta_1),\ldots,h_K(\theta_n)),K\big) &\leq
V_2(P(h_K(\theta_1),\ldots,h_K(\theta_n))\setminus P_{in}) \nn \\
&\leq\frac{1}{4} \max\limits_{1\leq i\leq n}\tan\left(\frac{\theta_{i+1}-\theta_i}{2} \right) \sum\limits_{i=1}^{n} c_i^2,
\end{align*}
and since $c_i^2/2 \leq c_i / \sqrt{2}$ and $\sum_{i=1}^n c_i = 2 V_1(P_{in})$, we arrive at
\begin{equation*}
\delta_N\big(P(h_K(\theta_1),\ldots,h_K(\theta_n)),K\big) 
\leq
\frac{1}{\sqrt{2}} V_1(P_{in}) \max\limits_{1\leq i\leq n}\tan\left(\frac{\theta_{i+1}-\theta_i}{2}\right). 
\end{equation*}
The monotonicity of intrinsic volumes with respect to set inclusion then yields the assertion.
\end{proof}

%---------------------------------------------------------------------------------------

%%%%%%%%%%%%%%%%%%%%%%%%%%%%%%%%%%%%%%%%%%%%%%%%%%%%%%%%%%%%%%%%%%%%

\begin{thm}\label{Stability2}
Let $K \subset [0,1]^2$ be a convex body, $0 \leq \theta_1 < \dots < \theta_n < 2 \pi$, $\theta_{n+1}=\theta_1$ and assume that $0,\frac{\pi}{2},\pi,\frac{3\pi}{2} \in \{\theta_1,\ldots,\theta_n\}$. Any least squares estimator $\eP$ of $K$ on $\cPt$ satisfies that
\[\delta_N(\eP,K)\leq \sqrt{2}\max\limits_{1\leq i\leq n}\tan\left(\frac{\theta_{i+1}-\theta_i}{2}\right)+ \frac{a_1}{N+1},\]
where $a_1>0$ is a constant.
\end{thm}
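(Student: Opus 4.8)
The plan is to compare the least squares estimator $\eP$ with the circumscribing polygon $P_0:=P(h_K(\theta_1),\ldots,h_K(\theta_n))$, and then feed the resulting moment estimate into the Legendre stability bound, Theorem~\ref{ThmEstimateLegendre}.

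First I would verify that $P_0\in\cPt$. Since $K$ lies in each of the halfspaces $\{x:\langle x,u_i\rangle\le h_K(\theta_i)\}$, we get $K\subset P_0$ and hence $h_K(\theta_i)\le h_{P_0}(u_i)\le h_K(\theta_i)$, which shows that $(h_K(\theta_1),\ldots,h_K(\theta_n))$ is consistent with respect to $(\theta_1,\ldots,\theta_n)$. The assumption $0,\tfrac{\pi}{2},\pi,\tfrac{3\pi}{2}\in\{\theta_1,\ldots,\theta_n\}$ serves two purposes: it makes $\{u_1,\ldots,u_n\}\supseteq\{\pm e_1,\pm e_2\}$, so condition~\eqref{condition_u} holds, and it forces $P_0\subset[0,1]^2$, since $K\subset[0,1]^2$ gives $h_K((1,0)),h_K((0,1))\le 1$ and $h_K((-1,0)),h_K((0,-1))\le 0$, whence the four axis-direction halfspaces among those defining $P_0$ already confine it to the unit square. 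Thus $P_0\in\cPt$.

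Next I would exploit minimality: by definition of $\eP$ and Parseval's inequality for the orthonormal basis $\{(x_1,x_2)\mapsto L_i(x_1)L_j(x_2)\}$ applied to $\mathbf{1}_K-\mathbf{1}_{P_0}$ (exactly as in the proof of Theorem~\ref{ThmLSQdistbounds}),
\[
\sum_{k,l=0}^N\bigl(\lambda_{kl}(K)-\lambda_{kl}(\eP)\bigr)^2\le\sum_{k,l=0}^N\bigl(\lambda_{kl}(K)-\lambda_{kl}(P_0)\bigr)^2\le\bigl|\mathbf{1}_K-\mathbf{1}_{P_0}\bigr|^2_{L^2([0,1]^2)}=\delta_N(K,P_0).
\]
Then Lemma~\ref{SymDifEstimate}, applicable because \eqref{condition_u} was checked above, bounds the right-hand side by $\varepsilon^2:=\sqrt{2}\,\max_{1\le i\le n}\tan\!\left(\frac{\theta_{i+1}-\theta_i}{2}\right)$. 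Since the sum over $i,j=1,\dots,N$ is dominated by the sum over $k,l=0,\dots,N$, which is at most $\varepsilon^2$, an appeal to Theorem~\ref{ThmEstimateLegendre} together with Remark~\ref{NikodymExt} (to cover the case where $\eP$ degenerates to a lower-dimensional set) yields $\delta_N(\eP,K)\le\varepsilon^2+a_1/(N+1)$, which is the assertion.

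The only step I expect to require real care is the first one — checking that the circumscribing polygon $P_0$ is admissible, i.e.\ both consistent and contained in $[0,1]^2$; the containment is precisely where the hypothesis on the four axis directions is genuinely used, and it also simultaneously supplies the hypothesis \eqref{condition_u} needed by Lemma~\ref{SymDifEstimate}. Everything after that is a routine concatenation of minimality, Parseval's inequality, Lemma~\ref{SymDifEstimate}, and the Legendre stability estimate.
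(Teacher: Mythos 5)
Your proposal is correct and follows essentially the same route as the paper: admissibility of the circumscribing polygon $P(h_K(\theta_1),\ldots,h_K(\theta_n))$ via the four axis directions, then minimality of $\eP$ combined with Parseval, Lemma~\ref{SymDifEstimate}, and finally Theorem~\ref{ThmEstimateLegendre} with Remark~\ref{NikodymExt}. Your extra verification of consistency and of containment in $[0,1]^2$ only spells out details the paper leaves implicit.
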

%%%%%%%%%%%%%%%%%%%%%%%%%%%%%%%%%%%%%%%%%%%%%%%%%%%%%%%%%%%%%%%%%%%

\begin{proof}
Since $0,\frac{\pi}{2},\pi,\frac{3\pi}{2} \in \{\theta_1,\ldots,\theta_n\}$ and $K\subset [0,1]^2$ it follows that
  \[
  P(h_K(\theta_1),\ldots,h_K(\theta_n))\subset [0,1]^2
  \]
and thus $ P(h_K(\theta_1),\ldots,h_K(\theta_n))\in \cPt$. Then the definition of $\eP$ and Parseval's identity yield that
\begin{align*}
\sum_{i,j=0}^N (\lambda_{ij}(K) - \lambda_{ij}(\eP))^2 &\leq  \sum_{i,j=0}^N [\lambda_{ij}(K) - \lambda_{ij}(P(h_K(\theta_1),\dots, h_K(\theta_n)))]^2
\\
&\leq
\delta_N(P(h_K(\theta_1),\dots, h_K(\theta_n)),K).
\end{align*}
Thus, an application of Lemma~\ref{SymDifEstimate} implies that
\begin{equation}\label{upperbound}
\sum_{i,j=0}^N (\lambda_{ij}(K) - \lambda_{ij}(\eP))^2 \leq  \sqrt{2}\, \max\limits_{1\leq i\leq n}\tan\left(\frac{\theta_{i+1}-\theta_i}{2}\right).
 \end{equation} 
Then the result follows from Theorem~\ref{ThmEstimateLegendre} and Remark~\ref{NikodymExt}.
\end{proof}

\begin{rem}
If we choose $n= 4m$ for some $m\in \N$ and equidistant angles $\theta_i:= 2\pi \left(\frac{i-1}{n}\right)$ for $\quad 1\leq i\leq n$, then  $0,\frac{\pi}{2},\pi,\frac{3\pi}{2} \in \{\theta_1,\ldots,\theta_n\}$
and we obtain
\[\sqrt{2} \max\limits_{1\leq i\leq n}\tan\left(\frac{\theta_{i+1}-\theta_i}{2}\right) \approx \frac{\sqrt{2}\pi}{n} \approx\begin{cases} 0.05,& n=100,\\
0.005,& n=1000,\\0.0025,& n=2000.
\end{cases}\]
\end{rem}

%--------------------------------------------------------------------
%\begin{proof}[Proof of Theorem~\ref{Stability2}]
% On the space of convex bodies $\cK$ we define the pseudometric $\delta_{\lambda,N}$ by
%  \[\delta_{\lambda,N}(K,L):=\sqrt{\sum\limits_{i,j=0}^N \left(\lambda_{ij}(K)-\lambda_{ij}(L)\right)^2}, \quad K,L \in \cK.\]
% By Theorem~\ref{ThmEstimateLegendre} and Remark~\ref{NikodymExt}, the distance of the estimator $\eP$ to the original convex body $K$ in the symmetric difference metric $\delta_N$ can be bounded by
%  \[
%  \delta_N(\eP,K)\leq \delta_{\lambda,N}(\eP,K)^2 + C \frac{1}{N}.
%  \]
%  From $0,\frac{\pi}{2},\pi,\frac{3\pi}{2} \in \{\theta_1,\ldots,\theta_n\}$ and $K\subset [0,1]^2$ it follows that
%  \[
%  P(h_K(\theta_1),\ldots,h_K(\theta_n))\subset [0,1]^2
%  \]
%  and thus $ P(h_K(\theta_1),\ldots,h_K(\theta_n))\in \cPt$.
%  Therefore, by the definition of $\eP$ we have
%  \[
%  \delta_{\lambda,N}(\eP,K)\leq   \delta_{\lambda,N}\big(P(h_K(\theta_1),\ldots,h_K(\theta_n)),K\big)
%  \]
%  and by Parseval's identity and Lemma \ref{SymDifEstimate}, it follows that
% \begin{align*} \delta_{\lambda,N}\big(P(h_K(\theta_1),\ldots,h_K(\theta_n)),K\big)^2
%  &\leq \delta_N\big(P(h_K(\theta_1),\ldots,h_K(\theta_n)),K\big)\\
%  %
%  &\leq \frac{1}{\sqrt{2}}\,V_1(K) \max\limits_{1\leq i\leq n-1}\tan\left(\frac{\theta_{i+1}-\theta_i}{2}\right)\\
%  %
%  &\leq \sqrt{2}\, \max\limits_{1\leq i\leq n-1}\tan\left(\frac{\theta_{i+1}-\theta_i}{2}\right).
%  \end{align*}
%\end{proof}

%----------------------------------------------------------------------
In the following, we write $\theta_{(1)}, \dots, \theta_{(n)}$ for a permutation of $\theta_i \in [0,2\pi), 1 \leq i \leq n$ satisfying $\theta_{(1)} \leq \dots \leq \theta_{(n)}$. From Theorem~\ref{Stability2}, we then obtain Corollary~\ref{cor_convergence}.

\begin{cor}\label{cor_convergence}
Let $K \subset [0,1]^2$ be a convex body and let $(\theta_i)_{i \in \N}$ be a dense sequence in $[0, 2\pi)$ such that $\theta_i \neq \theta_j$ for $i \neq j$ and $(\theta_1, \theta_2, \theta_3, \theta_4)=(0, \frac{\pi}{2}, \pi, \frac{3 \pi}{2})$. For $n,N \in \N $, let $\eP$ be a least squares estimator of $K$ with respect to the $(N+1)^2$ first Legendre moments on the space $\cP(\theta_{(1)}, \dots, \theta_{(n)})$. Then 
\begin{equation*}
\delta_N(K, \eP) \rightarrow 0 \text{ for } n,N \rightarrow \infty.
\end{equation*}
\end{cor}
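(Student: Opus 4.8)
The plan is to derive the corollary as a direct consequence of Theorem~\ref{Stability2} by controlling the two terms in the upper bound
\[
\delta_N(\eP, K) \leq \sqrt{2} \max_{1 \leq i \leq n} \tan\left(\frac{\theta_{(i+1)}-\theta_{(i)}}{2}\right) + \frac{a_1}{N+1}
\]
as $n, N \to \infty$ (with $\theta_{(n+1)} := \theta_{(1)} + 2\pi$ in the cyclic sense, i.e. the first gap wraps around). Observe first that the hypotheses of Theorem~\ref{Stability2} are met for every $n \geq 4$: since $(\theta_1, \theta_2, \theta_3, \theta_4) = (0, \tfrac{\pi}{2}, \pi, \tfrac{3\pi}{2})$, the four axis directions lie in $\{\theta_1, \dots, \theta_n\}$, and since the $\theta_i$ are pairwise distinct, $\theta_{(1)} < \dots < \theta_{(n)}$ are genuinely ordered and condition~\eqref{condition_u} holds (the four axis normals already positively span $\R^2$). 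Hence the displayed bound is valid for all $n \geq 4$ and all $N \in \N$.

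The second term $a_1/(N+1)$ is independent of $n$ and tends to $0$ as $N \to \infty$, so it suffices to show the first term tends to $0$ as $n \to \infty$, uniformly in $N$. This is where density of $(\theta_i)_{i \in \N}$ enters: I would argue that the maximal gap
\[
g_n := \max_{1 \leq i \leq n} \big(\theta_{(i+1)} - \theta_{(i)}\big)
\]
between consecutive points of the finite set $\{\theta_1, \dots, \theta_n\}$ on the circle $\R/2\pi\Z$ converges to $0$. Indeed, given $\eta > 0$, cover $[0, 2\pi)$ by finitely many open arcs of length $\eta$; density of the sequence guarantees that for $n$ large enough each such arc contains at least one $\theta_i$ with $i \leq n$, which forces $g_n < 2\eta$. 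Since $\tan$ is continuous and increasing on $[0, \pi/2)$ with $\tan 0 = 0$, and $g_n/2 < \pi/2$ for $n$ large (as $g_n \to 0$), we get $\max_i \tan\!\big((\theta_{(i+1)}-\theta_{(i)})/2\big) = \tan(g_n/2) \to 0$.

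Putting the two pieces together: given $\varepsilon > 0$, choose $N_0$ with $a_1/(N_0+1) < \varepsilon/2$ and $n_0 \geq 4$ with $\sqrt{2}\tan(g_n/2) < \varepsilon/2$ for all $n \geq n_0$; then $\delta_N(\eP, K) < \varepsilon$ whenever $n \geq n_0$ and $N \geq N_0$, which is exactly the claimed convergence as $n, N \to \infty$. I expect the only genuinely delicate point to be the clean statement that $g_n \to 0$ — in particular being careful that the "gap" is measured cyclically on the circle so that the wrap-around interval $[\theta_{(n)}, \theta_{(1)}+2\pi]$ is included, and noting that Theorem~\ref{Stability2} is applied with the sorted directions $\theta_{(1)}, \dots, \theta_{(n)}$ rather than the original indexing; everything else is a routine $\varepsilon$-argument.
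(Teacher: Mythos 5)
Your proof is correct and follows essentially the same route as the paper, which obtains the corollary directly from Theorem~\ref{Stability2}: density of $(\theta_i)$ forces the maximal cyclic gap (and hence the tangent term) to vanish as $n\to\infty$, while $a_1/(N+1)\to 0$ as $N\to\infty$. Your explicit treatment of the wrap-around gap and the verification of condition~\eqref{condition_u} are exactly the details the paper leaves implicit.
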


%%%%%%%%%%%%%%%%%%%%%%%%%%%%%%%%%%%%%%%%%%%%%%%%%%%%%%%%%%%%%%%%%%%%%

\subsection{Reconstruction from noisy measurements}\label{SecNoise}
The reconstruction algorithm described in Section~\ref{Sec_LSQLegendre} requires knowledge of exact Legendre moments of a convex body. The reconstruction algorithm can be modified such that it allows for noisy measurements of Legendre moments. Let $N \in \N_0$, and assume that $K \subset [0,1]^2$ is a convex body where measurements of the first $(N+1)^2$ Legendre moments are known. To include noise, we assume that the measurements are of the form
\begin{equation} \label{measurements}
\tilde{\lambda}_{kl}(K) = \lambda_{kl}(K) + \epsilon_{Nkl}
\end{equation}
for $k,l=0, \dots, N$, where $\epsilon_{Nkl}$, $k,l =0, \dots, N$ are random variables with zero means and finite variances bounded by a constant $\sigma_N^2$. Let $0 \leq \theta_1 < \dots < \theta_n < 2\pi$ satisfy condition~\eqref{condition_u}. Any polygon $\ePn \in \cPt$ satisfying
\begin{equation*}
\ePn = \argmin \left\{ \sum\limits_{k,l=0}^N (\tilde{\lambda}_{kl}(K)-\lambda_{kl}(P))^2:\quad P\in\cP(\theta_1,\ldots,\theta_n) \right\}
\end{equation*}
is called a least squares estimator of $K$ with respect to the measurements~\eqref{measurements} on the space $\cPt$. As the set $\cPt$ is closed in the Hausdorff metric, Blaschke's selection theorem ensures the existence of a least squares estimator. 

As in Section~\ref{Sec_LSQLegendre}, a least squares estimator can be found using polynomial optimization. Let $(\tilde{h}_1, \dots, \tilde{h}_n)$ be a solution to the polynomial optimization problem \eqref{OptProb} with the Legendre moments $\lambda_{kl}(K)$ of $K$ replaced by the measurements $\tilde{\lambda}_{kl}(K)$ of the Legendre moments in the objective function $f$. Then $P(\tilde{h}_1, \dots, \tilde{h}_n) \in \cPt$ is a least squares estimator of $K$ with respect to the measurements \eqref{measurements}. 
  
Now, let $\sol$ denote the random set of least squares estimators of $K$ with respect to the measurements \eqref{measurements} on the space $\cPt$. When the noise variables are defined on a complete probability space $(\Omega, \cF, \P)$, it follows by arguments as in \cite[p. 27]{Kousholt2016} (see also \cite[App. C]{Pollard1984}) that $\sup_{P \in \sol} \delta_N(K,P)$ is $(\cF, \cB(\R))$-measurable. We can then formulate the following theorem, which ensures consistency of the reconstruction algorithm under certain assumptions on the variances of the noise variables.

%\begin{lem} \label{LemmaBound}
%Assume that $e_1, -e_1, e_2, -e_2 \in \{u_1, \dots, u_n\}$, and let $\ePn$ be a least squares estimator of $K$ based on the measurements~\eqref{measurements}. Then
%\begin{align} \label{BoundNoise}
%\delta_N(K, \ePn)
%\leq 16 \sqrt{2} \max_{1 \leq i \leq n-1}\tan\left(\frac{\theta_{i+1} - \theta_i}{2}\right) +  C\frac{1}{N+1} + 20\sum_{k,l=0}^N \epsilon_{kl}^2.
%\end{align}
%\end{lem}
%
%\begin{proof} Let $\eP \in \cP(\theta_1, \dots, theta_n)$ denote the least squares estimator of $K$ based on the exact Legendre moments. By using the inequality $\lvert x + y \rvert^2 \leq 4(\lvert x^2 \rvert + \lvert y \rvert^2 )$ for $x,y \in \R$ and the properties of $\ePn$ and $\eP$, we obtain that
%\begin{align*}
%& \sum_{k,l=0}^N (\lambda_{kl}(K)-\lambda_{kl}(\ePn))^2 
%\leq 
%4 \sum_{k,l=0}^N \left((\tilde{\lambda}_{kl}(K)-\lambda_{kl}(\ePn))^2 + \epsilon_{kl}^2 \right)
%\\
%& \leq 
%16 \sum_{k,l=0}^N (\tilde{\lambda}_{kl}(K)-\lambda_{kl}(K))^2 + 16 \sum_{k,l=0}^N (\lambda_{kl}(K)-\lambda_{kl}(\eP))^2 +  4\sum_{k,l=0}^N \epsilon_{kl}^2
%\\
%&\leq
%16 \delta_{\lambda,N}(K,\eP)^2 + 20\sum_{k,l=0}^N \epsilon_{kl}^2.
%\end{align*}
%Using the upper bound on $\delta_{\lambda,N}(K,\eP)$ derived in the proof of Theorem~\ref{Stability2}, we arrive at
%\begin{equation*}
%\sum_{k,l=0}^N (\lambda_{kl}(K)-\lambda_{kl}(\ePn))^2 \leq 16 \sqrt{2} \max_{1 \leq i \leq n-1}\tan\left(\frac{\theta_{i+1} - \theta_i}{2}\right) + 20\sum_{k,l=0}^N \epsilon_{kl}^2.
%\end{equation*}
%Then inequality~\eqref{BoundNoise} follows by Theorem~\ref{ThmEstimateLegendre}.
%\end{proof}

\begin{thm}\label{ThmNoise}
Let $(\theta_i)_{i \in \N}$ be a dense sequence in $[0,2\pi)$ such that $\theta_i \neq \theta_j$ for $i \neq j$ and $(\theta_1, \theta_2, \theta_3, \theta_4)=(0,\frac{\pi}{2}, \pi, \frac{3\pi}{2})$.
\begin{enumerate}[(i)]
\item If $\sigma_N^2 = \cO(\frac{1}{N^{2+ \varepsilon}})$ for some $\varepsilon > 0$, then $\sup_{P \in \sol} \delta_N(K,P) \rightarrow 0$ in mean and in probability for $n,N \rightarrow \infty$.
\item If $\sigma_N^2 = \cO(\frac{1}{N^{3+\varepsilon}})$ for some $\varepsilon > 0$, then $\sup_{P \in \sol} \delta_N(K, P) \rightarrow 0$ almost surely for $n,N \rightarrow \infty$.
\end{enumerate}
 
\end{thm}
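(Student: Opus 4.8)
The plan is to reduce both statements to a deterministic bound on $\sup_{P \in \sol} \delta_N(K,P)$ that splits into a ``bias'' term coming from the finite number $n$ of normals and a ``noise'' term coming from the discrepancy $\sum_{k,l=0}^N \epsilon_{Nkl}^2$. The starting point is the observation, exactly as in the proof of Theorem~\ref{Stability2}, that $P(h_K(\theta_1),\dots,h_K(\theta_n)) \in \cPt$ because $0,\tfrac{\pi}{2},\pi,\tfrac{3\pi}{2}\in\{\theta_1,\dots,\theta_n\}$. Hence for any $P \in \sol$, writing $\hat{P}:=P(h_K(\theta_1),\dots,h_K(\theta_n))$, the definition of $\sol$ gives
\[
\sum_{k,l=0}^N (\tilde\lambda_{kl}(K)-\lambda_{kl}(P))^2 \;\le\; \sum_{k,l=0}^N (\tilde\lambda_{kl}(K)-\lambda_{kl}(\hat P))^2.
\]
Expanding $\tilde\lambda_{kl}(K)=\lambda_{kl}(K)+\epsilon_{Nkl}$, using the triangle inequality in $\R^{(N+1)^2}$ and Parseval together with Lemma~\ref{SymDifEstimate} to control $\sum_{k,l}(\lambda_{kl}(K)-\lambda_{kl}(\hat P))^2 \le \delta_N(\hat P,K) \le \sqrt{2}\max_i \tan(\tfrac{\theta_{(i+1)}-\theta_{(i)}}{2})=:b_n$, I would obtain
\[
\Big(\sum_{k,l=0}^N (\lambda_{kl}(K)-\lambda_{kl}(P))^2\Big)^{1/2} \;\le\; \sqrt{b_n} + 2\Big(\sum_{k,l=0}^N \epsilon_{Nkl}^2\Big)^{1/2}.
\]
Squaring and feeding this into Theorem~\ref{ThmEstimateLegendre} (via Remark~\ref{NikodymExt}, since $P$ is a polygon and the distances go to zero) yields a bound of the shape
\[
\sup_{P\in\sol}\delta_N(K,P) \;\le\; 2\,b_n + 8\sum_{k,l=0}^N \epsilon_{Nkl}^2 + \frac{a_1}{N+1},
\]
which is the key deterministic inequality; the precise constants are routine.

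From here the two parts are soft. For (i): since $(\theta_i)$ is dense, $b_n \to 0$ as $n\to\infty$, and $a_1/(N+1)\to 0$; for the noise term, $\E\big[\sum_{k,l=0}^N \epsilon_{Nkl}^2\big] \le (N+1)^2 \sigma_N^2 = \cO(N^{-\varepsilon})\to 0$ when $\sigma_N^2=\cO(N^{-2-\varepsilon})$. Taking expectations in the displayed bound gives convergence in mean, and convergence in probability follows from Markov's inequality (or from $L^1$-convergence directly). For (ii): I want almost-sure convergence of $S_N:=\sum_{k,l=0}^N \epsilon_{Nkl}^2$ to $0$. With $\sigma_N^2=\cO(N^{-3-\varepsilon})$ we have $\E[S_N]=\cO(N^{-1-\varepsilon})$, which is summable in $N$; hence $\sum_N \E[S_N]<\infty$, so $\sum_N S_N<\infty$ almost surely by the monotone convergence theorem (Tonelli), and in particular $S_N\to 0$ a.s. Combined with $b_n\to 0$ and $a_1/(N+1)\to 0$, the deterministic bound gives $\sup_{P\in\sol}\delta_N(K,P)\to 0$ almost surely as $n,N\to\infty$. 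The measurability of $\sup_{P\in\sol}\delta_N(K,P)$ needed to make these statements meaningful is quoted from \cite{Kousholt2016, Pollard1984} in the text preceding the theorem.

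The main obstacle, and the only genuinely delicate point, is bookkeeping the joint limit $n,N\to\infty$ cleanly: the bias term $b_n$ depends only on $n$ while the noise and projection-tail terms depend only on $N$, so one must be careful that the estimator $\eP=\ePn$ and the comparison body $\hat P$ live on the \emph{same} space $\cP(\theta_{(1)},\dots,\theta_{(n)})$ for each fixed $n$, and that Remark~\ref{NikodymExt} applies (i.e.\ $\intr P\ne\emptyset$ for the relevant polygons once the distance is small). A secondary subtlety is that the $\epsilon_{Nkl}$ for different $N$ need not be defined consistently or independently across $N$; but since we only ever use $\E[S_N]$ and the Tonelli argument $\sum_N \E[S_N]<\infty \Rightarrow \sum_N S_N<\infty$ a.s., no independence across $N$ is required — only the uniform variance bound within each level $N$. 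I do not expect the polynomial-optimization reformulation (Lemma~\ref{MomentsFromh} and \eqref{OptProb}) to enter the proof at all; it is only needed to \emph{compute} the estimator, not to analyze its consistency.
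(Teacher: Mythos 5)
Your proposal is correct and follows essentially the same route as the paper: a deterministic bound of the form (bias from the finite normal set) $+$ (noise term $\sum_{k,l}\epsilon_{Nkl}^2$) $+$ (the $a_1/(N+1)$ tail from Theorem~\ref{ThmEstimateLegendre} via Remark~\ref{NikodymExt}), followed by the expectation bound $(N+1)^2\sigma_N^2$ with Markov for (i) and summability of $(N+1)^2\sigma_N^2$ for the almost sure statement in (ii). The only cosmetic differences are that you compare the noisy estimator directly with $P(h_K(\theta_1),\dots,h_K(\theta_n))$ and use the Euclidean triangle inequality, whereas the paper routes through the noiseless least squares estimator $\eP$ and the inequality $(x+y)^2\leq 2(x^2+y^2)$, which only changes the constants.
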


\begin{proof}
Let $\theta_{(1)} < \dots < \theta_{(n)}$ be an ordering of $\theta_1, \dots, \theta_n$ and $\theta_{(n+1)}:=\theta_1$. In the notation, we suppress that the ordering depends on $n$.
Let $\sol$ be the set of least squares estimators of $K$ with respect to the noisy measurements $\tilde{\lambda}_{kl}(K)$ on the space $\cP(\theta_{(1)},\ldots,\theta_{(n)})$.
Furthermore, let $P \in \sol$ and let $\eP \in \cP(\theta_{(1)}, \dots, \theta_{(n)})$ denote a least squares estimator of $K$ with respect to the exact Legendre moments. We use the inequality $(x + y)^2 \leq 2(x^2 + y^2)$ for $x,y \in \R$ to derive the first and third of the subsequent inequalities and the fact that $P$ is a least squares estimator with respect to the noisy measurements $\tilde{\lambda}_{kl}(K)$ to derive the second inequality. Thus, we obtain that
\begin{align*}
& \sum_{k,l=0}^N (\lambda_{kl}(K)-\lambda_{kl}(P))^2 
\leq 
2 \sum_{k,l=0}^N \left((\tilde{\lambda}_{kl}(K)-\lambda_{kl}(P))^2 + \epsilon_{Nkl}^2 \right)
\\
& \leq 
2 \sum_{k,l=0}^N (\tilde{\lambda}_{kl}(K)-\lambda_{kl}(\eP))^2 +  2\sum_{k,l=0}^N \epsilon_{Nkl}^2
\\
&\leq
4 \sum_{k,l=0}^N (\lambda_{kl}(K)-\lambda_{kl}(\eP))^2 + 6\sum_{k,l=0}^N \epsilon_{Nkl}^2.
\end{align*}
Using the upper bound \eqref{upperbound} on $\sum_{k,l=0}^N (\lambda_{kl}(K)-\lambda_{kl}(\eP))^2$  derived in the proof of Theorem~\ref{Stability2}, we arrive at
\begin{equation*}
\sum_{k,l=0}^N (\lambda_{kl}(K)-\lambda_{kl}(P))^2 \leq 4 \sqrt{2} \max_{1 \leq i \leq n} \bigg\lvert \tan\left(\frac{\theta_{(i)} - \theta_{(i+1)}}{2}\right)\bigg\rvert+ 6\sum_{k,l=0}^N \epsilon_{Nkl}^2.
\end{equation*}
 Then it follows from Theorem~\ref{ThmEstimateLegendre} and Remark~\ref{NikodymExt} that
\begin{equation*}
\sup_{P \in \sol}\delta_N(K, P) \leq 4 \sqrt{2} \max_{1 \leq i \leq n} \bigg \lvert \tan\left(\frac{ \theta_{(i)} - \theta_{(i+1)}}{2}\right) \bigg\rvert + 6\sum_{k,l=0}^N \epsilon_{Nkl}^2 + \frac{a_1}{N+1}.
\end{equation*}
The mean of the sum of the squared error terms are bounded by $(N+1)^2 \sigma_N^2$, and the assumption that $\sigma_N^2 = \cO(\frac{1}{N^{2+ \varepsilon}})$ ensures that $(N+1)^2 \sigma_N^2 \rightarrow 0$ for $N \rightarrow \infty$. As the sequence $(\theta_i)_{i \in \N}$ is dense in $[0, 2\pi)$, we further have that
\begin{equation*}
 \max_{ 1 \leq i \leq n} \bigg \lvert \tan\left(\frac{ \theta_{(i)} - \theta_{(i+1)}}{2}\right) \bigg\rvert \rightarrow 0
\end{equation*}
for $n \rightarrow \infty$. Hence, $\sup_{P \in \sol}\delta_N(K,P) \rightarrow 0$ in mean and in probability for $n,N \rightarrow \infty$.

If $\sigma_N^2=\cO(\frac{1}{N^{3+\varepsilon}})$, then $\sum_{N=0}^\infty (N+1)^2 \sigma_N^2 < \infty$, which ensures that $\sum_{k,l=0}^N \epsilon_{Nkl}^2 \rightarrow 0$ almost surely for $N \rightarrow \infty$. Then, $\sup_{P \in \sol}\delta_N(K,P) \rightarrow 0$ almost surely for $N,n \rightarrow \infty$.

\end{proof}
\section{Implementation and examples of reconstruction}\label{Chapter:Implementation}
We implemented the reconstruction from geometric and Legendre moments, respectively, in Matlab. The code is available at \url{https://gitlab.com/julia.c.schulte/reconstructionfrommoments}. For the optimization we use Matlab's local optimization routine fmincon with interior point optimization and a regular $n$-gon as starting value.
As examples we consider the reconstruction of the following three convex bodies: 
\begin{itemize}
\item the square $K_1:=[0.25,0.75]\times[0,0.5]$, 
\item the half disk $K_2:=\{(x,y)\in\R^2: (x-0.3)^2+(y-0.3)^2\leq 0.3^2\text{ and }y\geq 0.3\}$, 
\item the convex body of constant width $K_3$ which is bounded by the curve 
\[\left\{\begin{pmatrix}0.5+0.05(9\cos(\varphi) +2\cos(2\varphi)-\cos(4\varphi))\\
0.5+0.05(9\sin(\varphi)-2\sin(2\varphi)-\sin(4\varphi))\end{pmatrix}: \varphi\in[0,2\pi]\right\}.\]
\end{itemize}
The third convex body $K_3$ is the positivity set of a polynomial of order 8, see \cite{Rabinowitz.1997}. Thus, by Corollary \ref{SecConsequences} it is uniquely determined in the class $\cK^2$ by its geometric moments (or its Legendre moments) up to order $8$.
We consider the reconstruction from moments up to order $N= 1,3,5$ and with $n=8,32$  sides with normal directions which are equidistant in $[0,2\pi]$.
The calculation of the geometric and Legendre moments for the three bodies was also done in Matlab.
For the reconstruction from noisy moments we add noise $\varepsilon_{Nkl}\sim \cN(0,\sigma_N^2)$ with $\sigma_N=0.01*\min\left(1,N^{-\frac{3.5}{2}}\right)$.
The reconstructions from exact geometric moments are shown in Figures \ref{fig:GeometricNoise0BodyNr1}, \ref{fig:GeometricNoise0BodyNr4} and \ref{fig:GeometricNoise0BodyNr3} and from noisy geometric moments in Figures \ref{fig:GeometricNoise1BodyNr1}, \ref{fig:GeometricNoise1BodyNr4} and \ref{fig:GeometricNoise1BodyNr3}. The reconstructions from exact Legendre moments are displayed in Figures \ref{fig:LegendreNoise0BodyNr1}, \ref{fig:LegendreNoise0BodyNr4} and \ref{fig:LegendreNoise0BodyNr3} and from noisy Legendre moments in Figures \ref{fig:LegendreNoise1BodyNr1}, \ref{fig:LegendreNoise1BodyNr4} and \ref{fig:LegendreNoise1BodyNr3}.  
The reconstructions of $K_1$ and $K_2$ from geometric moments are poor with and without noise whereas the reconstruction of $K_3$ is successful even with noise. 
The reconstruction of all bodies from Legendre moments is successful, though it is clearly visible that the corners are difficult to reconstruct. This is an effect of the optimization procedure which delivers only an approximate solution. The effect of noise is also visible especially when comparing the reconstructions with $N=3$ and $n=32$ in Figures \ref{fig:LegendreNoise0BodyNr1} and \ref{fig:LegendreNoise1BodyNr1} or in Figures \ref{fig:LegendreNoise0BodyNr4} and \ref{fig:LegendreNoise1BodyNr4}. It should be noted that the number $n$ of sides of the reconstructions can be chosen independently of the maximal order $N$ of the available moments. Comparing with the reconstruction from moments of the surface area measure \cite{Kousholt2016} the reconstruction from geometric or Legendre moments has the clear advantage that the number $n$ of sides of the reconstruction can be chosen independently of the maximal order $N$ of the available moments and is not bounded by $2N+1$ as it is the case for the reconstruction from moments of the surface area measure. Especially for smooth convex bodies this leads to a good reconstruction already from very few moments, compare e.g. Figure \ref{fig:LegendreNoise0BodyNr3}.

\newpage
\subsection{Example 1}
\begin{figure}[h!]
	\centering
		\includegraphics[width=1\textwidth]{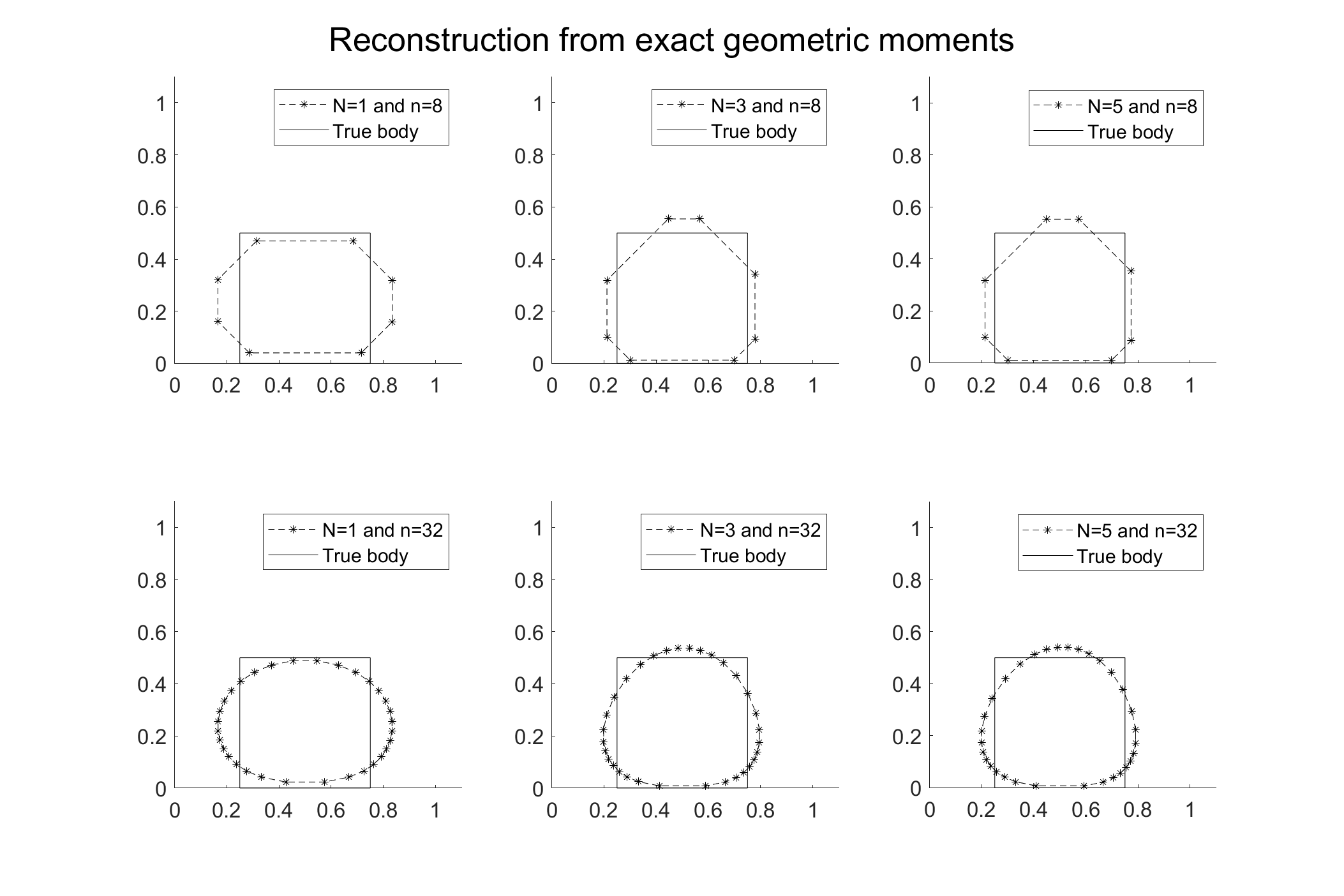}
	\caption{Reconstruction of the body $K_1$ from exact geometric moments.}
			\label{fig:GeometricNoise0BodyNr1}
\end{figure}

\begin{figure}[h!]
	\centering
		\includegraphics[width=1\textwidth]{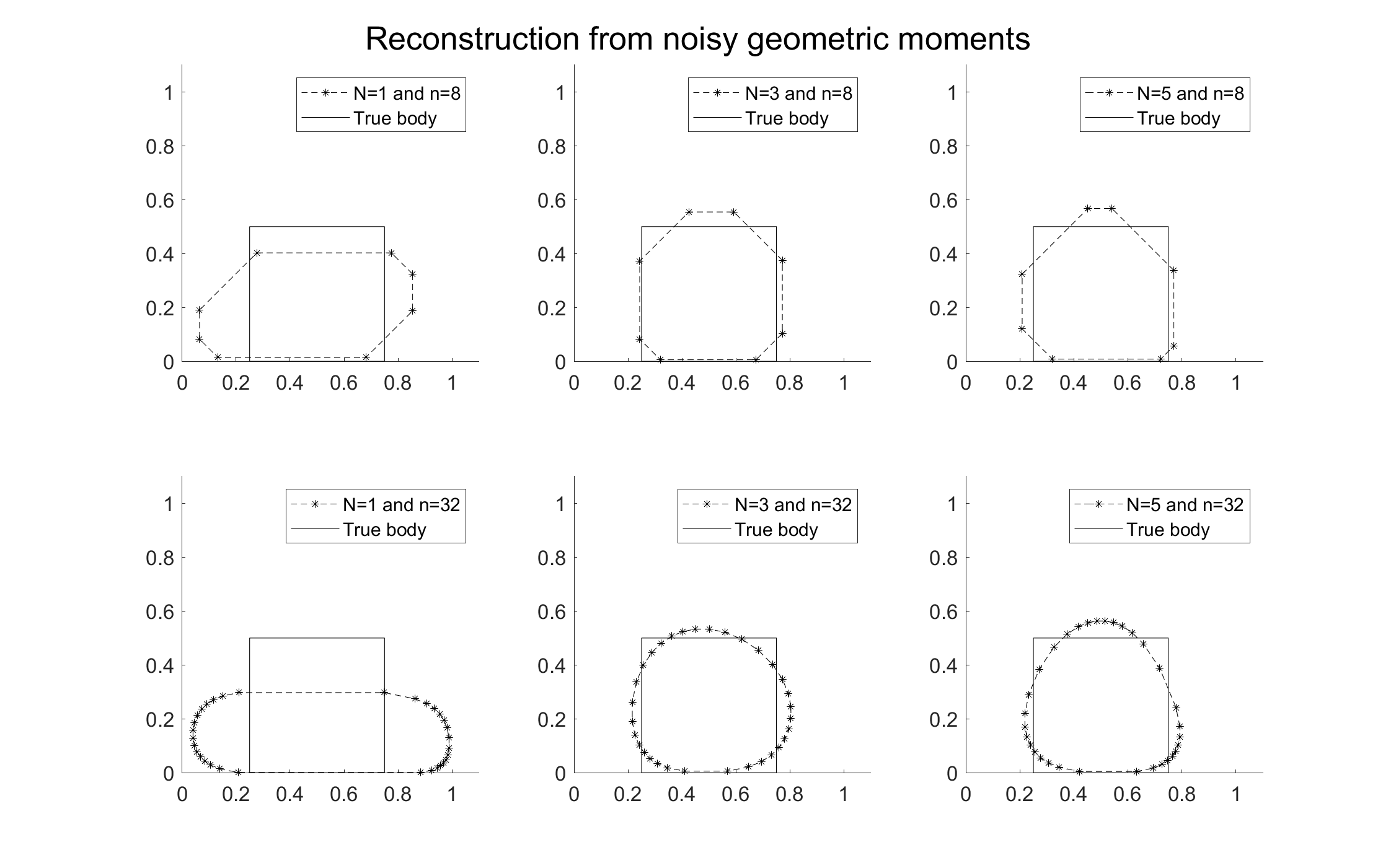}
	\caption{Reconstruction of the body $K_1$ from noisy geometric moments.}
				\label{fig:GeometricNoise1BodyNr1}
\end{figure}
\newpage
\begin{figure}[h!]
	\centering
		\includegraphics[width=1\textwidth]{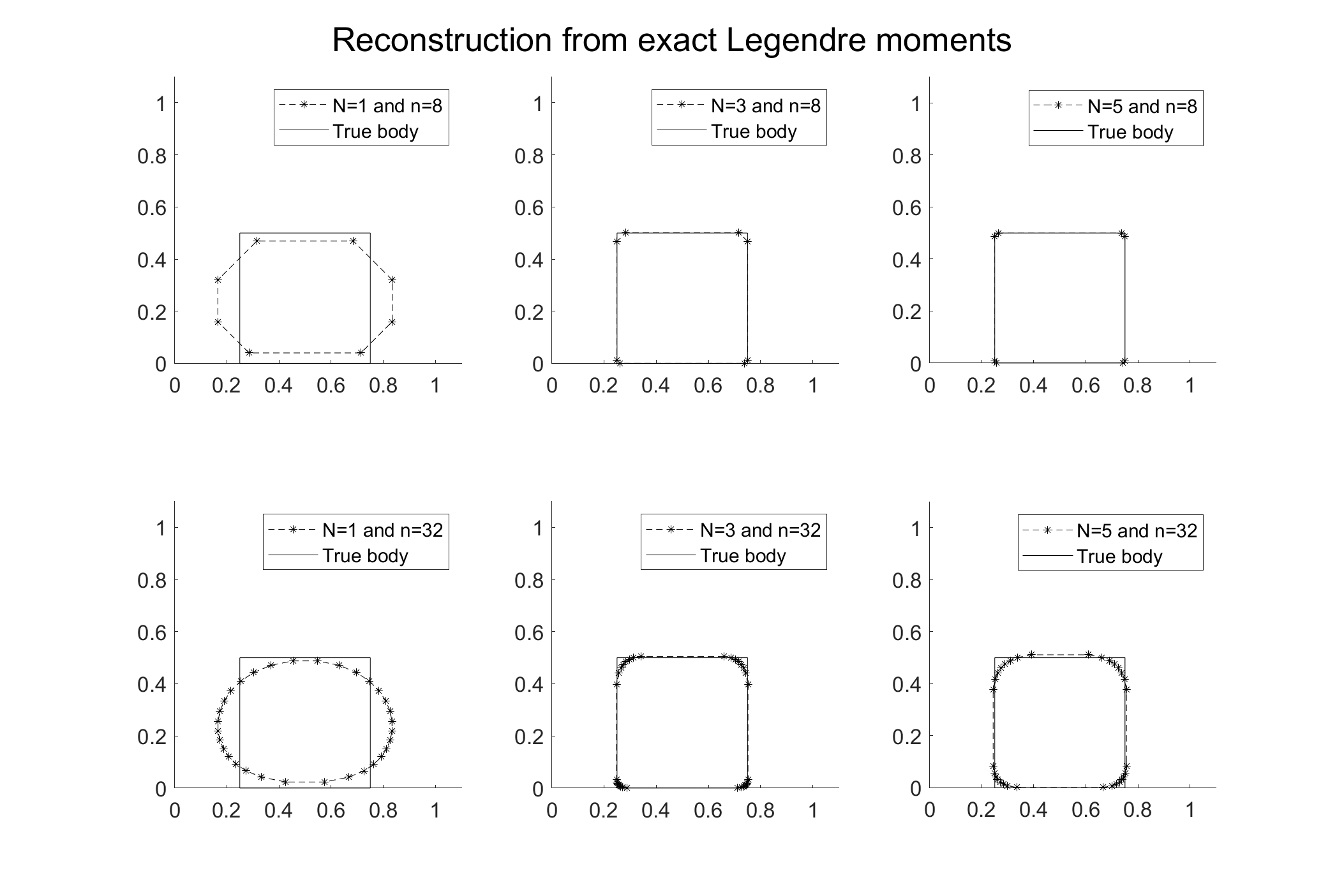}
	\caption{Reconstruction of the body $K_1$ from exact Legendre moments.}
				\label{fig:LegendreNoise0BodyNr1}
\end{figure}
\begin{figure}[h!]
	\centering
		\includegraphics[width=1\textwidth]{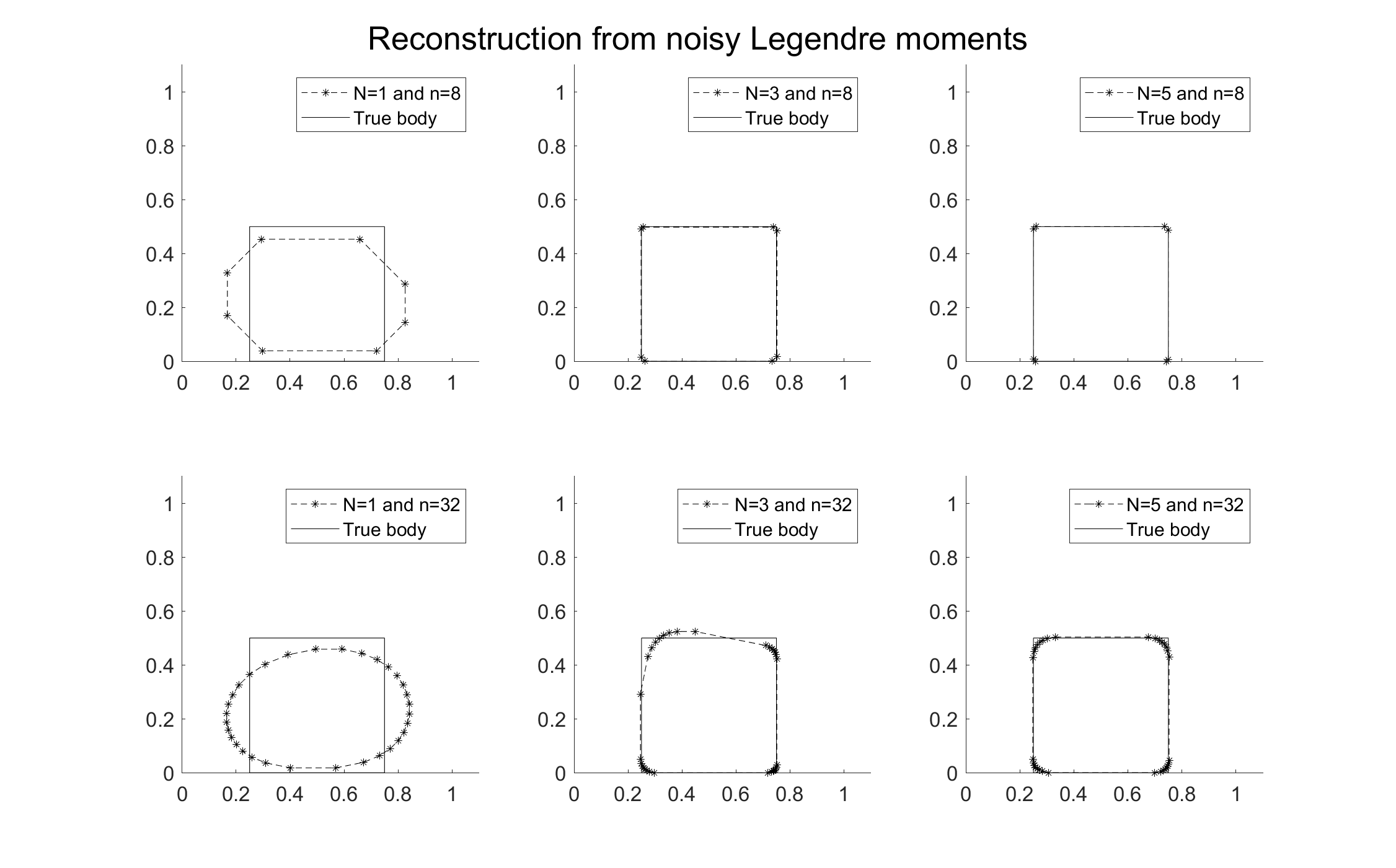}
	\caption{Reconstruction of the body $K_1$ from noisy Legendre moments.}
			\label{fig:LegendreNoise1BodyNr1}
\end{figure}

\newpage
\subsection{Example 2}

\begin{figure}[h!]
	\centering
		\includegraphics[width=1\textwidth]{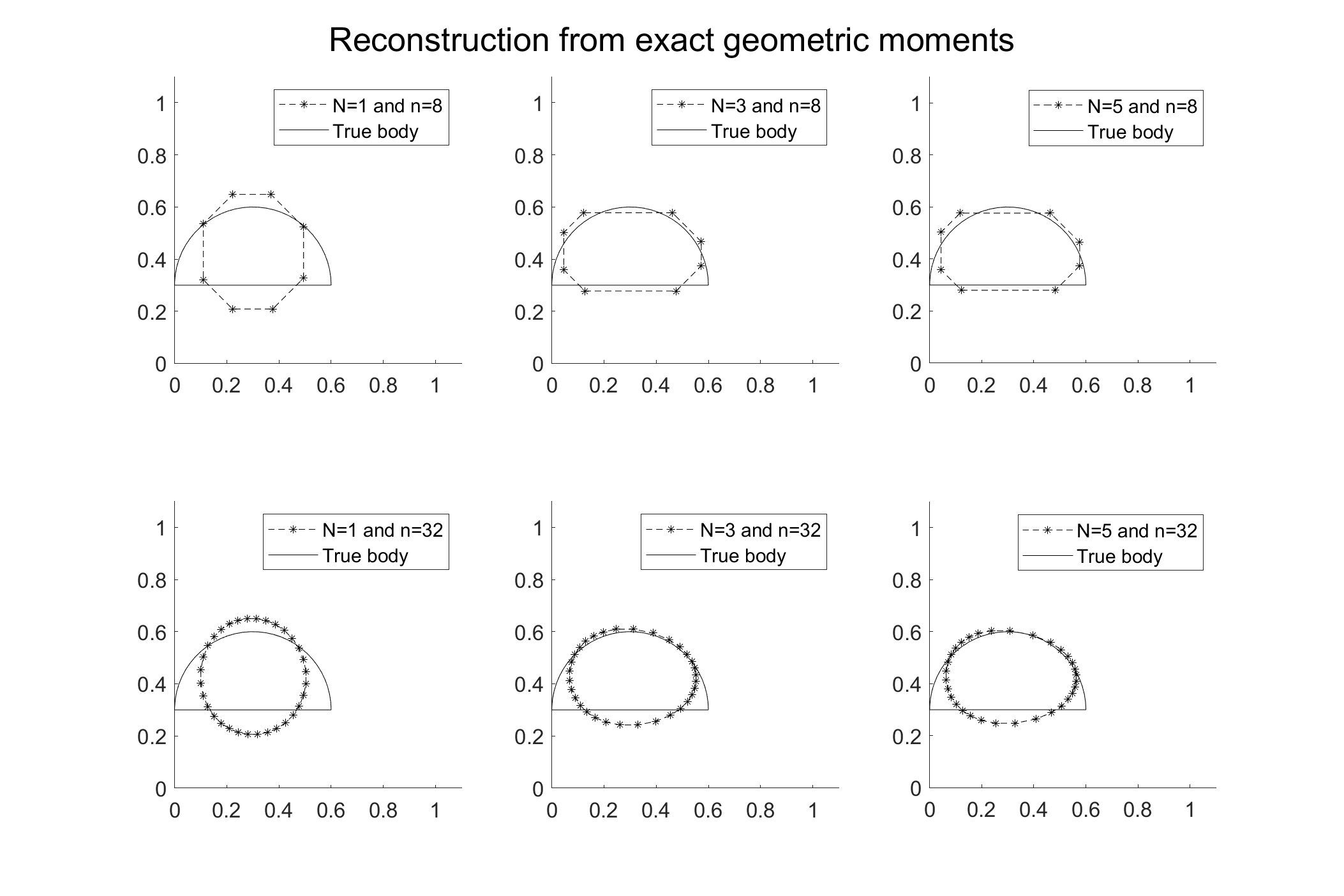}
	\caption{Reconstruction of the body $K_2$ from exact geometric moments.}
					\label{fig:GeometricNoise0BodyNr4}
\end{figure}
\begin{figure}[h!]
	\centering
		\includegraphics[width=1\textwidth]{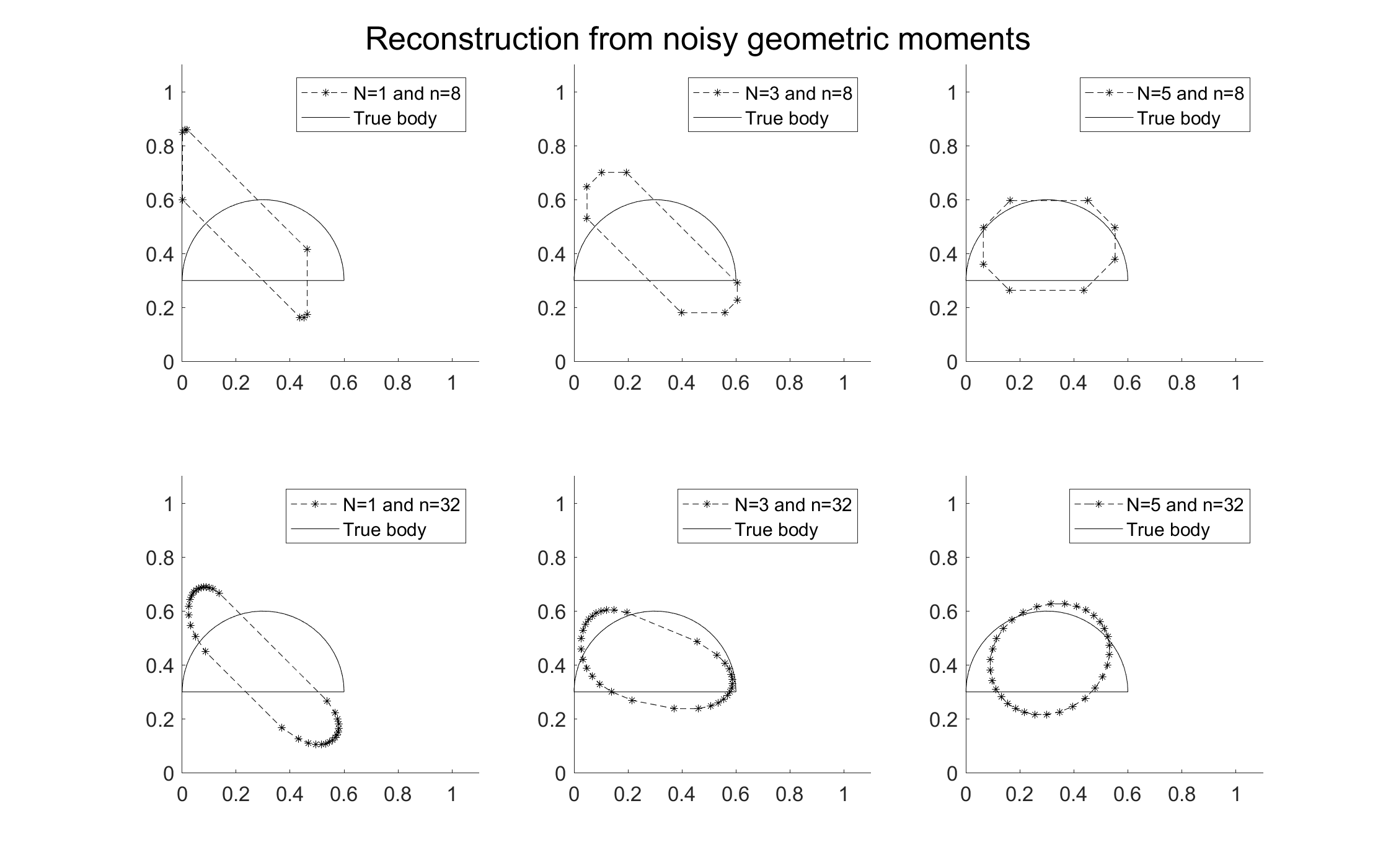}
	\caption{Reconstruction of the body $K_2$ from noisy geometric moments.}
				\label{fig:GeometricNoise1BodyNr4}
\end{figure}
\newpage
\begin{figure}[h!]
	\centering
		\includegraphics[width=1\textwidth]{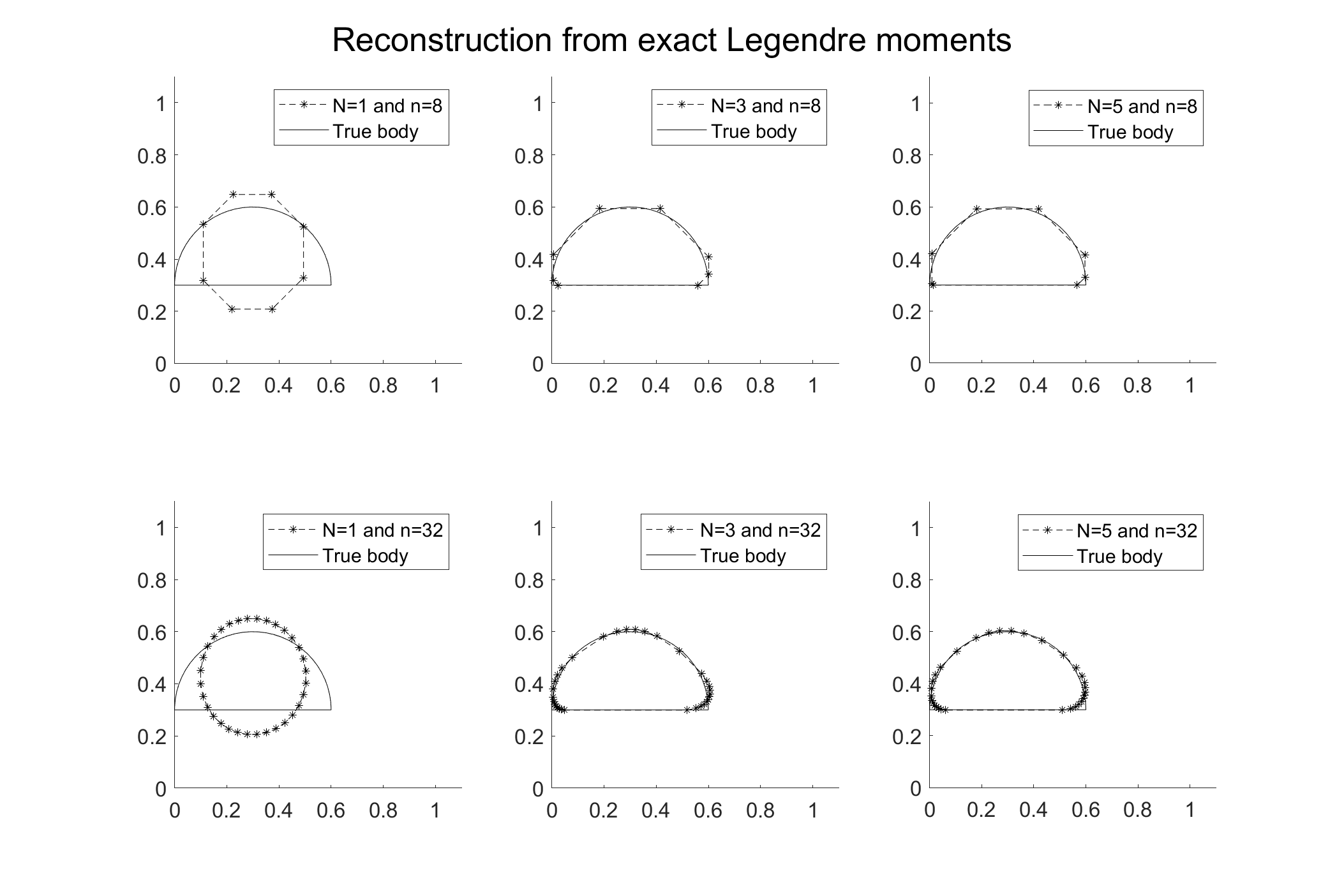}
	\caption{Reconstruction of the body $K_2$ from exact Legendre moments.}
			\label{fig:LegendreNoise0BodyNr4}
\end{figure}
\begin{figure}[h!]
	\centering
		\includegraphics[width=1\textwidth]{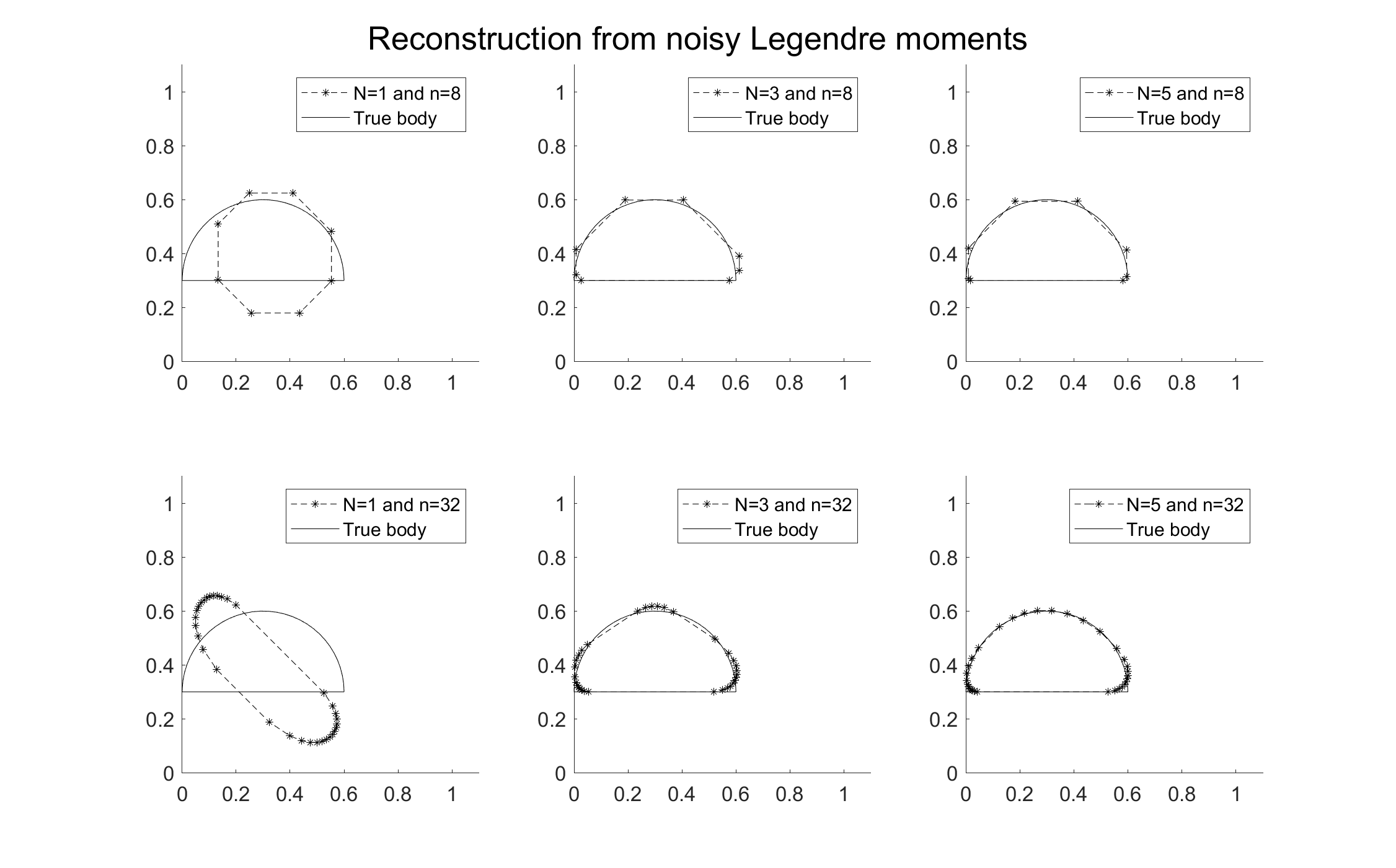}
	\caption{Reconstruction of the body $K_2$ from noisy Legendre moments.}
			\label{fig:LegendreNoise1BodyNr4}
\end{figure}

\newpage
\subsection{Example 3}
\begin{figure}[h!]
	\centering
		\includegraphics[width=1\textwidth]{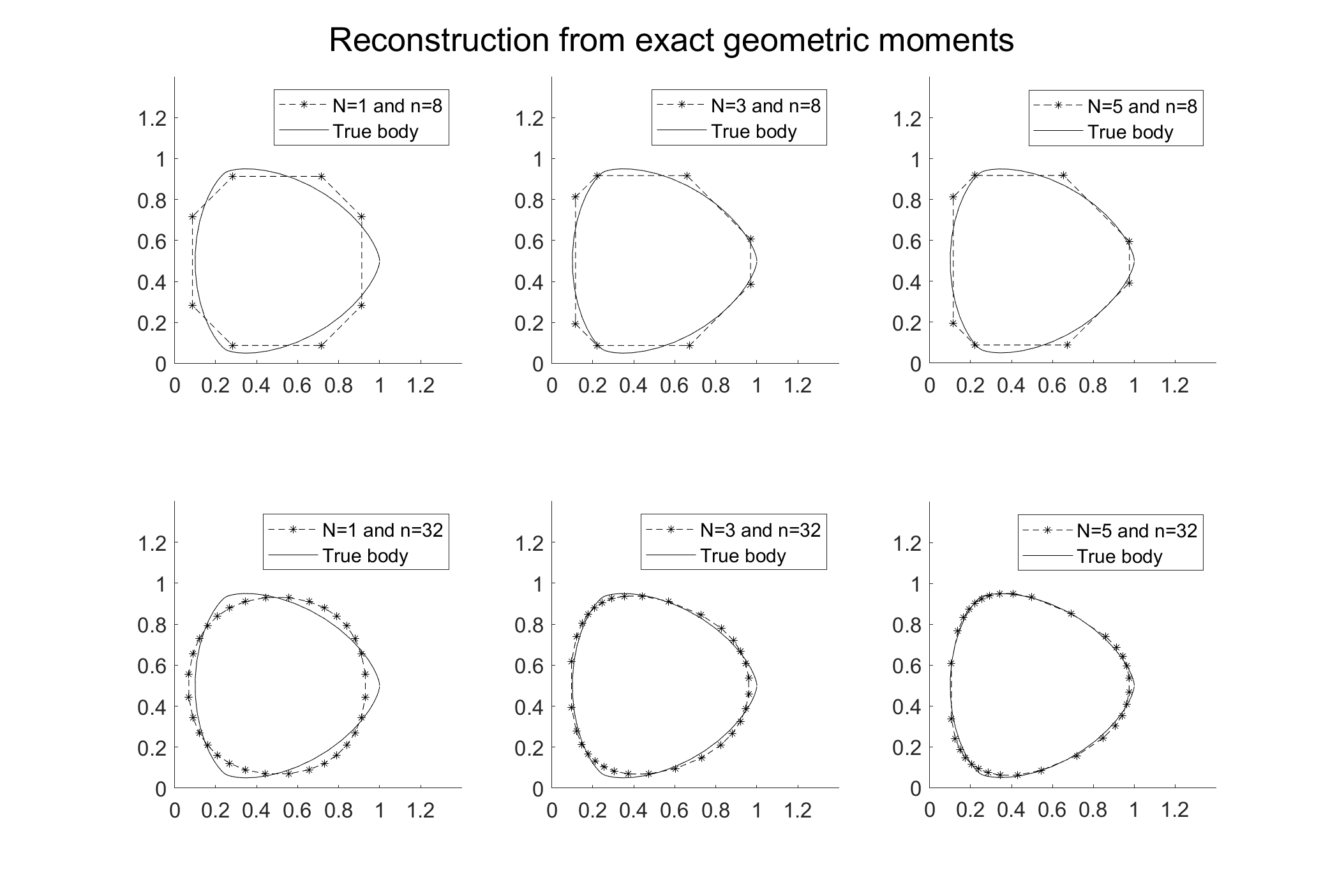}
		\caption{Reconstruction of the body $K_3$ from exact geometric moments.}
					\label{fig:GeometricNoise0BodyNr3}
\end{figure}
\begin{figure}[h!]
	\centering
		\includegraphics[width=1\textwidth]{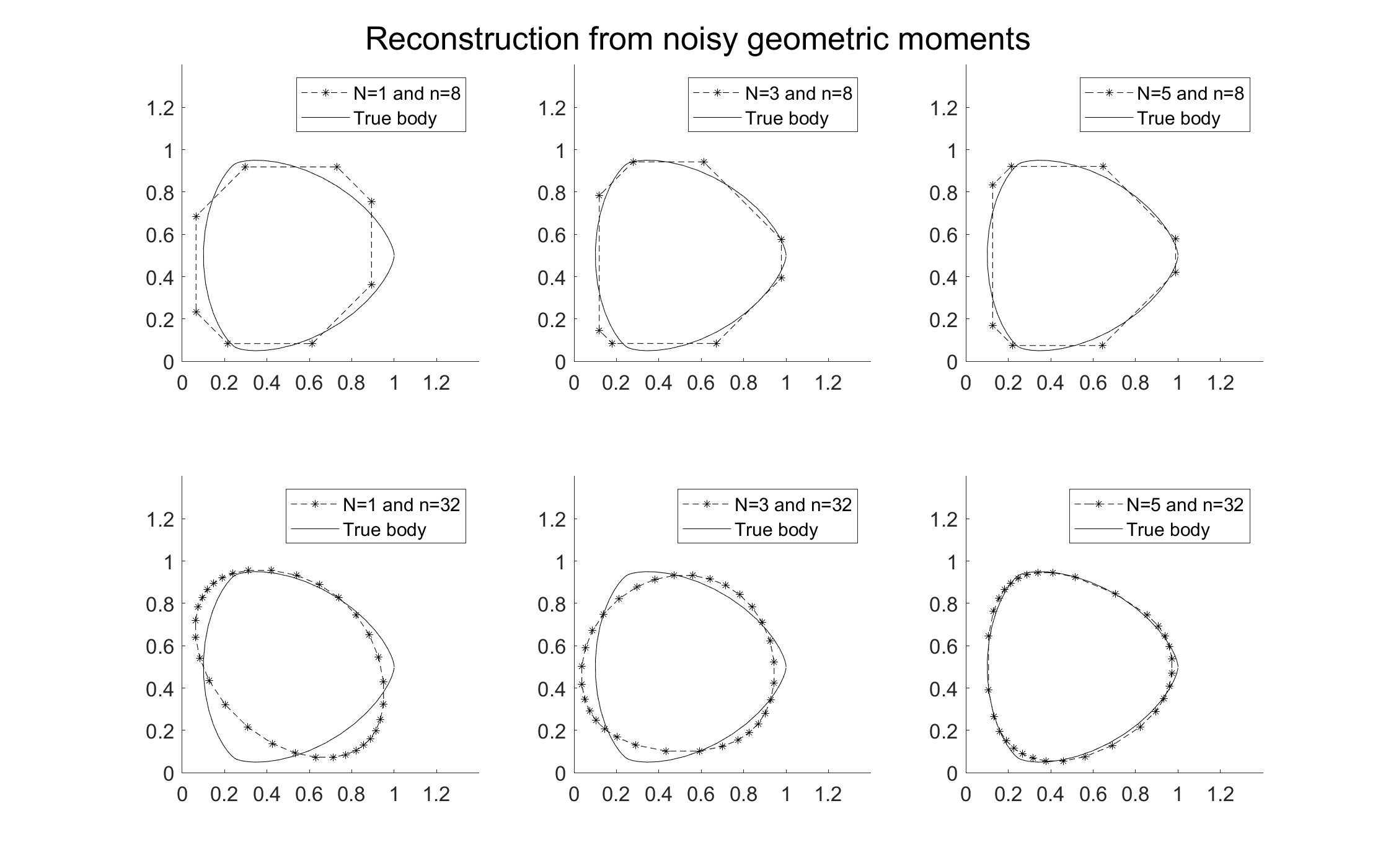}
		\caption{Reconstruction of the body $K_3$ from noisy geometric moments.}
							\label{fig:GeometricNoise1BodyNr3}
\end{figure}
\newpage
\begin{figure}[h!]
	\centering
		\includegraphics[width=1\textwidth]{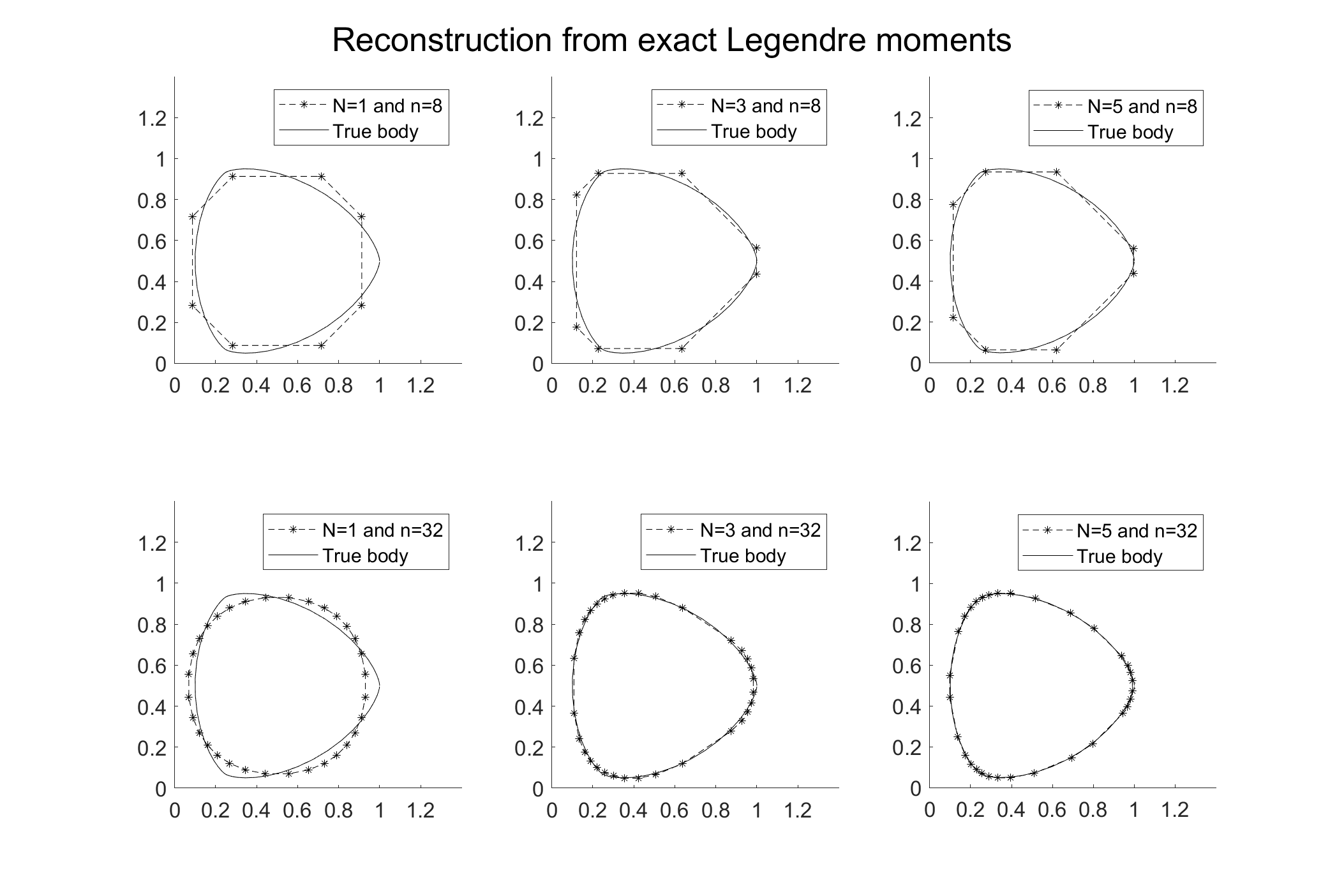}
		\caption{Reconstruction of the body $K_3$ from exact Legendre moments.}
				\label{fig:LegendreNoise0BodyNr3}
\end{figure}
\begin{figure}[h!]
	\centering
		\includegraphics[width=1\textwidth]{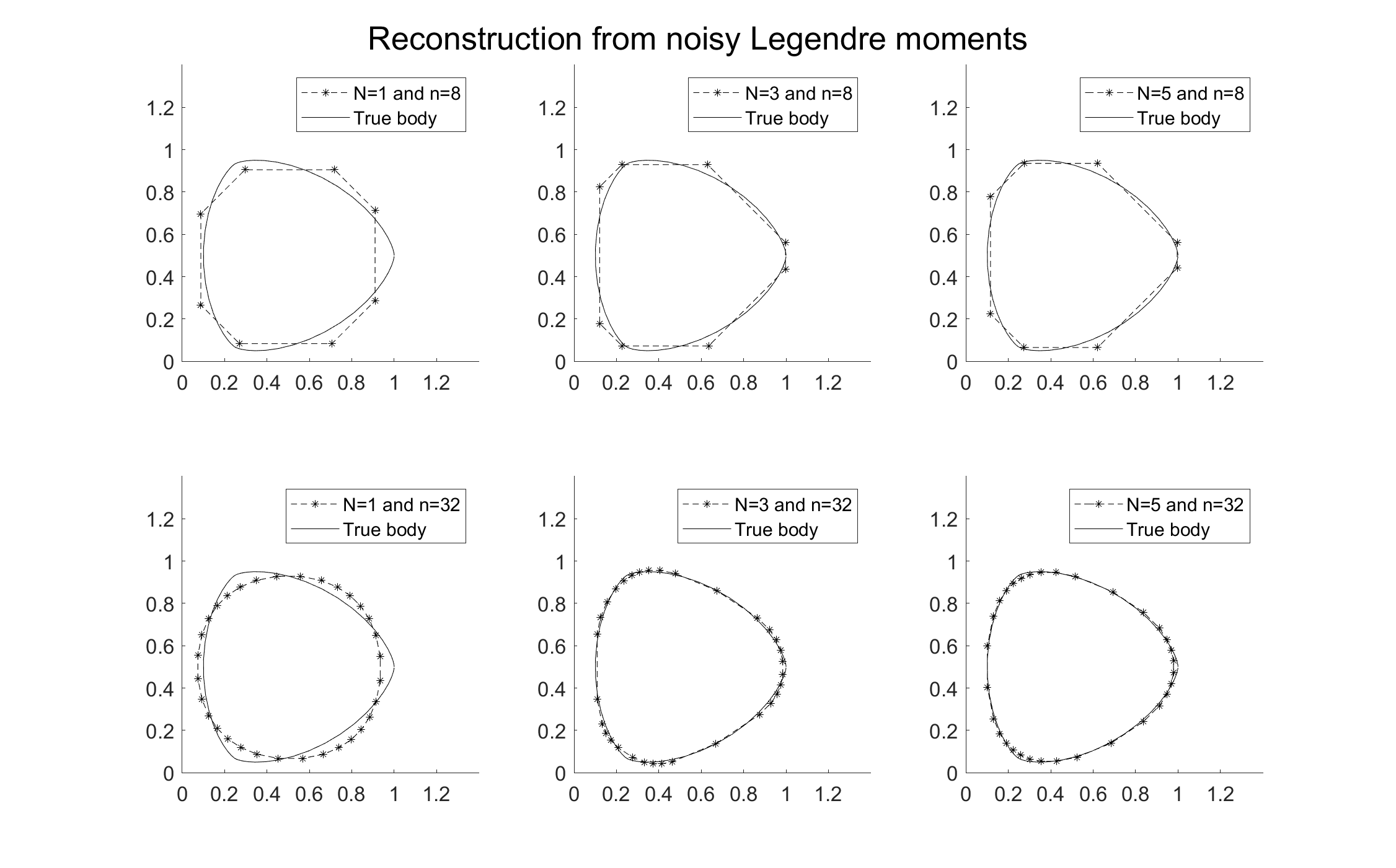}
		\caption{Reconstruction of the body $K_3$ from noisy Legendre moments.}
				\label{fig:LegendreNoise1BodyNr3}
\end{figure}

\section*{Acknowledgements}
The author Julia Schulte was supported by the German Research Foundation (DFG) via the Research Group FOR 1548 \grqq Geometry and Physics of Spatial Random Systems" and by ETH Foundations of Data Science.
The author Astrid Kousholt was supported by Centre for Stochastic Geometry and Advanced Bioimaging, funded by the Villum Foundation. We are very grateful to Markus Kiderlen for his ideas and useful comments during the process of writing this paper.  
%%%%%%%%%%%%%%%%%%%%%%%%%%%%%%%%%%%%%%%%%%%%%%%%%%%%%%%%%%%%%%%%%%%%

%\bibliographystyle{IEEEtranSA}
\bibliography{shapereconstruction2}

\section{Appendix}

\begin{lem}\label{Lem:GeometricMomentCoefficients}
For $1\leq i\leq n$ let $v_i$ be defined as in \eqref{vertexexpr} and let $0\leq k,l\leq N$ and $h\in\R^n$. Then we have 
\[
\sign(h_{i+1})\mu_{kl}(\conv\{0,v_i,v_{i+1}\})=\sum\limits_{q_1=0}^{k+l+1}\sum\limits_{q_2=0}^{k+l+2-q_1} M_{kl}(i,q_1,q_2)h_i^{q_1}h_{i+1}^{q_2} h_{i+2}^{k+l+2-q_1-q_2}
\]
where
\begin{align*}
M_{kl}(i,q_1,q_2)
&= \sum\limits_{q_1=0}^{k+l+1}\sum\limits_{q_2=1}^{k+l-q_1+2} h_i^{q_1} h_{i+1}^{q_2} h_{i+2}^{k+l+2-q_1-q_2}\\
&\quad \times \Big(a(i) \tilde{M}_{k,l}(i,q_1-1,q_2-1)\mathbf{1}\{q_1\neq 0\}\\
&\quad\quad+b(i)\tilde{M}_{k,l}(i,q_1,q_2-2)\mathbf{1}\{q_1\leq k+l,q_2\geq 2\}\\
&\quad\quad+c(i)\tilde{M}_{k,l}(i,q_1,q_2-1)\mathbf{1}\{q_1\leq k+l,1\leq q_2\leq k+l-q_1+1\}\Big)
\end{align*}
with 
\begin{align*}
&a(i)=-s_{i+1}c_{i+2}+c_{i+1}s_{i+2},\quad
b(i)=s_i c_{i+2}-c_i s_{i+2},\quad
c(i)=-s_i c_{i+1}+c_i s_{i+1}
\end{align*}
and
\begin{align*}
\tilde{M}_{k,l}(i,q_1,q_2)
&=\sum\limits_{p=0}^k\sum\limits_{q=0}^l\sum\limits_{r_1=0\vee(q_1-p)}^{q_1\wedge q}\sum\limits_{r_2=0\vee(p+l-q_1-q_2)}^{(k+l-q_1-q_2)\wedge(l-q)} \binom{k}{p}\binom{l}{q}(c_i s_{i+1}-s_i c_{i+1})^{-p-q-1}\\
&\quad\times (c_{i+1}s_{i+2}-s_{i+1}c_{i+2})^{-k+p-l+q-1}\frac{1}{(p+q+1)(k+l+2)}\binom{p}{q_1-r_1}\\
&\quad\times \binom{k-p}{k+l-q_1-q_2-r_2}\binom{q}{r_1}\binom{l-q}{r_2}(-1)^{p-q+q_2-k}\\
&\quad\times s_i^{p-q_1+r_1}s_{i+1}^{k+l-q_2-r_1-r_2}s_{i+2}^{q_1+q_2+r_2-l-p} c_i^{q-r_1}c_{i+1}^{r_1+r_2}c_{i+2}^{l-q-r_2}.
\end{align*}
\end{lem}
%---------------------------------------------------------------------------------------------------------------
\begin{proof}
By \eqref{GeometricMomentTriangle} we have  
\begin{align}
& \sign(h_{i+1}) \mu_{kl}(\conv\{0,v_i,v_{i+1}\})\notag
\\
 & =\int\limits_0^1\int\limits_0^{1-x_2} (v_{i,1}x_1+v_{i+1,1}x_2)^{k} (v_{i,2}x_1+v_{i+1,2}x_2)^l (v_{i,1}v_{i+1,2}-v_{i,2}v_{i+1,1})dx_1 dx_2\notag\\
 & = \int\limits_0^1\int\limits_0^{1-x_2}
 \sum\limits_{p=0}^k\sum\limits_{q=0}^l \binom{k}{p}\binom{l}{q}
 v_{i,1}^p v_{i+1,1}^{k-p} v_{i,2}^q v_{i+1,2}^{l-q} x_1^{p+q} x_2^{k+l-p-q}\notag\\
&\quad\times  (v_{i,1}v_{i+1,2}-v_{i,2}v_{i+1,1})dx_1 dx_2\notag\\
 & =  \sum\limits_{p=0}^k\sum\limits_{q=0}^l \binom{k}{p}\binom{l}{q}
 v_{i,1}^p v_{i+1,1}^{k-p} v_{i,2}^q v_{i+1,2}^{l-q} \sum\limits_{m=0}^{p+q+1}\binom{p+q+1}{m}\notag\\
&\quad\times \frac{(-1)^{p+q+1-m}}{(p+q+1)(k+l+2-m)} (v_{i,1}v_{i+1,2}-v_{i,2}v_{i+1,1})\label{GeoMomIdentity1}\\
  & =  \sum\limits_{p=0}^k\sum\limits_{q=0}^l \sum\limits_{q_1=0}^{p+q+1}\sum\limits_{q_2=p+q+1-q_1}^{k+l+2-q_1}
 \tilde{M}_{kl}(i,q_1,q_2) h_i^{q_1} h_{i+1}^{q_2} h_{i+2}^{k+l+2-q_1-q_2}\notag\\
  & =  \sum\limits_{q_1=0}^{k+l+1}\sum\limits_{q_2=0}^{k+l+2-q_1}
 M_{kl}(i,q_1,q_2) h_i^{q_1} h_{i+1}^{q_2} h_{i+2}^{k+l+2-q_1-q_2}.\notag
 \end{align}
To obtain the explicit expressions for the constants $\tilde{M}_{kl}(i,q_1,q_2)$ and $M_{kl}(i,q_1,q_2)$ we first observe that
\begin{align*}
v_{i,1}^p v_{i+1,1}^{k-p}&=\frac{1}{(c_i s_{i+1}-s_i c_{i+1})^p (c_{i+1}s_{i+2}-s_{i+1}c_{i+2})^{k-p}}(h_i s_{i+1}-h_{i+1}s_i)^p\\
&\quad\times (h_{i+1}s_{i+2}-h_{i+2}s_{i+1})^{k-p}\\
&=\frac{1}{(c_i s_{i+1}-s_i c_{i+1})^p (c_{i+1}s_{i+2}-s_{i+1}c_{i+2})^{k-p}}\\
&\quad\times \sum\limits_{m_1=0}^p\sum\limits_{m_2=0}^{k-p}\binom{p}{m_1}\binom{k-p}{m_2}
s_{i+1}^{m_1}(-s_i)^{p-m_1}s_{i+2}^{k-p-m_2}(-s_{i+1})^{m_2}\\
&\quad\times h_i^{m_1}h_{i+1}^{p-m_1}h_{i+2}^{m_2}h_{i+1}^{k-p-m_2}
\end{align*}
and
\begin{align*}
v_{i,2}^q v_{i+1,2}^{l-q}&=(c_i s_{i+1}-s_i c_{i+1})^{-q}(c_{i+1}s_{i+2}-s_{i+1}c_{i+2})^{-l+q}\\
&\quad\times (h_{i+1}c_i-h_i c_{i+1})^q(h_{i+2}c_{i+1}-h_{i+1}c_{i+2})^{l-q}\\
&=(c_i s_{i+1}-s_i c_{i+1})^{-q}(c_{i+1} s_{i+2}-s_{i+1}c_{i+2})^{-l+q}\\
&\quad\times\sum\limits_{r_1=0}^q\sum\limits_{r_2=0}^{l-q}\binom{q}{r_1}\binom{l-q}{r_2} c_i^{q-r_1}(-c_{i+1})^{r_1} c_{i+1}^{r_2}(-c_{i+2})^{l-q-r_2} h_i^{r_1} h_{i+1}^{q-r_1+l-q-r_2}h_{i+2}^{r_2}
\end{align*}
and
\begin{align*}
v_{i,1}v_{i+1,2}-v_{i,2}v_{i+1,1}&=\frac{1}{(c_i s_{i+1}-s_i c_{i+1})(c_{i+1}s_{i+2}-s_{i+1}c_{i+2})}\\
&\quad\times \Big[(h_i s_{i+1}-h_{i+1}s_i)(h_{i+2}c_{i+1}-h_{i+1}c_{i+2})
-(h_{i+1}c_i-h_ic_{i+1})\\
&\quad\quad\times(h_{i+1}s_{i+2}-h_{i+2}s_{i+1}))\Big]\\
&=\frac{1}{(c_i s_{i+1}-s_i c_{i+1})(c_{i+1}s_{i+2}-s_{i+1}c_{i+2})}\\
&\quad \times\Big[h_i h_{i+1}(-s_{i+1}c_{i+2}+c_{i+1}s_{i+2})+h_{i+1}^2(s_ic_{i+2}-c_i s_{i+2})\\
&\quad\quad+h_{i+1}h_{i+2}(-s_i c_{i+1}+c_i s_{i+1}\Big].
\end{align*}

Thus, we obtain
\begin{align}
\eqref{GeoMomIdentity1}&=\sum\limits_{p=0}^k\sum\limits_{q=0}^l \binom{k}{p}\binom{l}{q}(c_i s_{i+1}-s_i c_{i+1})^{-p-q-1}(c_{i+1}s_{i+2}-s_{i+1}c_{i+2})^{-k+p-l+q-1}\notag\\
&\quad\times \Big[h_i h_{i+1}(-s_{i+1}c_{i+2}+c_{i+1}s_{i+2})+h_{i+1}^2(s_i c_{i+2}-c_i s_{i+2})\notag\\
&\quad\quad+h_{i+1}h_{i+2}(-s_i c_{i+1}+c_i s_{i+1})
\Big]\sum\limits_{m=0}^{p+q+1}\binom{p+q+1}{m}\frac{(-1)^{p+q+1-m}}{(p+q+1)(k+l+2-m)}\notag\\
&\quad\quad+ \sum\limits_{m_1=0}^p\sum\limits_{m_2=0}^{k-p}\sum\limits_{r_1=0}^q
\sum\limits_{r_2=0}^{l-q}\binom{p}{m_1}\binom{k-p}{m_2} s_{i+1}^{m_1} (-s_i)^{p-m_1}s_{i+2}^{k-p-m_2} (-s_{i+1})^{m_2}\notag\\
&\quad\times\binom{q}{r_1}\binom{l-q}{r_2} c_i^{q-r_1}
(-c_{i+1})^{r_1} c_{i+1}^{r_2} (-c_{i+2})^{l-q-r_2} h_i^{m_1+r_1} h_{i+1}^{k-m_1-m_2+l-r_1-r_2} h_{i+2}^{m_2+r_2}\notag\\
&= \sum\limits_{p=0}^k\sum\limits_{q=0}^l\sum\limits_{m_1=0}^p\sum\limits_{m_2=0}^{k-p}\sum\limits_{r_1=0}^q\sum\limits_{r_2=0}^{l-q}\binom{k}{p}\binom{l}{q}
\binom{p}{m_1}\binom{k-p}{m_2}\binom{q}{r_1}\binom{l-q}{r_2}\notag\\
&\quad\times (c_i s_{i+1}-s_i c_{i+1})^{-p-q-1} (c_{i+1}s_{i+2}-s_{i+1}c_{i+2})^{-k+p-l+q-1} \frac{1}{(p+q+1)(k+l+2)}\notag\\
&\quad\times (-1)^{p-m_1-m_2+l-q-r_1-r_2} s_i^{p-m_1}s_{i+1}^{m_1+m_2}s_{i+2}^{k-p-m_2}c_i^{q-r_1}c_{i+1}^{r_1+r_2}c_{i+2}^{l-q-r_2}h_i^{m_1+r_1}\notag\\
&\quad\times h_{i+1}^{k-m_1-m_2+l-r_1-r_2} h_{i+2}^{m_2+r_2}\Big[h_i h_{i+1} (-s_{i+1}c_{i+2}+c_{i+1}s_{i+2})+h_{i+1}^2(s_i c_{i+2}-c_i s_{i+2})\notag\\
&\quad\quad+h_{i+1}h_{i+2}(-s_i c_{i+1}+c_i s_{i+1})\Big]\label{GeoMomIdentity2}.
\end{align}
Now, we introduce new indices $q_1=m_1+r_1$ and $q_2=k+l-m_1-m_2-r_1-r_2$ with summation $0\leq q_1\leq k+l$ and $0\leq q_2\leq k+l-q_1$. 
The new summation range of the indices $r_1$ and $r_2$ are then $0\vee(q_1-p)\leq r_1\leq q_1\wedge q$ and $0\vee (p+l-q_1-q_2)\leq r_2\leq (k+l-q_1-q_2)\wedge (l-q)$. 
The index change yields
\begin{align*}
\eqref{GeoMomIdentity2}&=\sum\limits_{q_1}^{k+l}\sum\limits_{q_2=0}^{k+l-q_1}h_i^{q_1}h_{i+1}^{q_2}h_{i+2}^{k+l-q_1-q_2}\tilde{M}_{k,l}(i,q_1,q_2)
\Big[h_i h_{i+1}a(i)+h_{i+1}^2 b(i)+ h_{i+1}h_{i+2} c(i)\Big]\\
&= \sum\limits_{q_1=0}^{k+l}\sum\limits_{q_2=0}^{k+l-q_1}\Big(h_i^{q_1+1}h_{i+1}^{q_2+1}h_{i+2}^{k+l-q_1-q_2} a(i)+h_i^{q_1}h_{i+1}^{q_2+2}h_{i+2}^{k+l-q_1-q_2}b(i)\\
&\quad+h_i^{q_1}h_{i+1}^{q_2+1}h_{i+2}^{k+l-q_1-q_2+1}c(i)\Big)\tilde{M}_{k,l}(i,q_1,q_2)
\end{align*}
which implies the assertion.
\end{proof}
\end{document}